\tikzset{
    midarrow/.style={
        decoration={markings, mark=at position 0.5 with {\arrow{>}}},
        postaction={decorate}
    }
}
\author{Hyun Chul Jang}
\address{Department of Mathematics, Texas State University, San Marcos, TX 78666}
\email{hcjang@txstate.edu}
\title{Entropy stability for products of negatively curved symmetric spaces}
\date{\today}
\begin{document}

\newtheorem{thm}{Theorem}
\newcounter{alphthm}
\renewcommand{\thealphthm}{\Alph{alphthm}}
\newtheorem{alphathm}[alphthm]{Theorem}

\newtheorem{theorem}{Theorem}[section] 
\newtheorem{claim}[theorem]{Claim}
\newtheorem{lemma}[theorem]{Lemma}
\newtheorem{proposition}[theorem]{Proposition}
\newtheorem{corollary}[theorem]{Corollary}
\theoremstyle{definition}
\newtheorem{definition}[theorem]{Definition}
\newtheorem{notation}[theorem]{Notation}
\newtheorem{defn}{Definition}
\theoremstyle{remark}
\newtheorem{remark}[theorem]{Remark}

\begin{abstract}
	Let $(M,g_0)$ be a closed oriented $n$-manifold that is locally isometric to a product $(X^{n_1}_1,g_1)\times\cdots (X_k^{n_k},g_k)$, where each $n_i\ge 3,$ and each factor $(X_i^{n_i},g_i)$ is a negatively curved symmetric space. We study the stability of minimal entropy rigidity for such manifolds. Specifically, we consider whether an entropy-minimizing sequence $(M,g_i)$ converges to the model space in the measured Gromov-Hausdorff sense after removing negligible subsets. 
	
	Previously, Song \cite{song2025entropy} established this type of stability for negatively curved symmetric spaces, where both the $n$-volume of the removed subsets and the $(n-1)$-volume of their boundaries converge to zero. We construct a counterexample demonstrating that this stronger stability notion does not generally hold in the product case; in particular, the condition that the $(n-1)$-volume of the boundary of removed subsets converges to zero cannot be imposed. 

	Nonetheless, we prove that an entropy-minimizing sequence $(M,g_i)$ converges to the model space after removing subsets whose $n$-volume converges to zero in the measured Gromov-Hausdorff topology. This result provides a weaker form of stability compared to the negatively curved symmetric case. A key ingredient in establishing this stability is our proof of the intrinsic uniqueness of the spherical Plateau solution for products of negatively curved symmetric spaces, which is of independent interest.
\end{abstract}

\maketitle

\numberwithin{equation}{section}

\section{Introduction}\label{sec:introduction}
Let $(M,g)$ be a closed Riemannian manifold and let $(\tilde{M},\tilde{g})$ be its universal cover equipped with the pulled-back metric. The volume entropy $h(g)$ of $(M,g)$ is defined as 
\[
	h(g):=\lim_{R\to\infty}\frac{1}{R}\log\mathrm{Vol}(B_{\tilde{g}}(o,R),\tilde{g})
\] where $B_{\tilde{g}}(o,R)$ is the ball of radius $R$ centered at $o\in\tilde{M}$ with respect to the metric $\tilde{g}$. The volume entropy is a geometric invariant of the manifold (i.e., it is independent of the choice of $o\in\tilde{M}$ \cite{Manning:1979aa}.) and has been studied extensively in the literature. One of the most important results is the volume entropy inequality and rigidity theorem for locally symmetric spaces proved by G. Besson, G. Courtois, and S. Gallot \cite{Besson:1995aa,Besson:1996aa} (cf. \cite{Ruan:2024aa} for the Cayley hyperbolic space), which states as the following: let $(M,g_0)$ be a closed $n$-manifold that is locally symmetric with negative curvature at most $-1$. Then for any Riemannian metric $g$ on $M$ with $\mathrm{Vol}(M,g)=\mathrm{Vol}(M,g_0)$, the following inequality holds:
\[
	h(g)\ge h(g_0)
\] with equality if and only if $(M,g)$ is isometric to $(M,g_0)$. While extending the minimal entropy rigidity to all nonpositively curved locally symmetric spaces (\cite[Open Question 5]{Besson:1996aa}) is still open, an important progress has been made by C. Connell and B. Farb \cite{Connell:2003aa} who generalized the minimal entropy rigidity to the case of products of negatively curved symmetric spaces.

\begin{thm}[\textbf{Minimal entropy rigidity for products of negatively curved symmetric spaces} \cite{Connell:2003aa}] 
	\label{thm:intro CF}
	Let $(M,g_0)$ be a closed oriented $n$-manifold which is locally isometric to a product $(X_1^{n_1},g_1)\times\cdots (X_k^{n_k},g_k)$ of negatively curved symmetric spaces (where $(X_j,g_j)$ has maximum sectional curvature $-1$ after scaling and dimension at least $3$). Define the metric $g_{\mathrm{min}}$ on $M$
	\[
		g_{\mathrm{min}}=\alpha_1^2 g_1\times\cdots\times\alpha_k^2 g_k,\, \alpha_j=\frac{h_j}{\sqrt{n_j}}\prod_{l=1}^k \left(\frac{\sqrt{n_l}}{h_l}\right)^{n_l/n},\, h(g_{\mathrm{min}})=\sqrt{n}\prod_{j=1}^{k}\left(\frac{h_j}{\sqrt{n_j}}\right)^{n_j/n},
	\] where
	\begin{align*}
		n_j=\dim X_j,\quad n=\sum_{j=1}^k n_j=\dim M,\quad h_j=\text{the volume entropy of }g_j.
	\end{align*}
	Then for any Riemannian metric $g$ on $M$ with $\mathrm{Vol}(M,g)=\mathrm{Vol}(M,g_{\mathrm{min}})$, the following inequality holds:
	\[
		h(g)\ge h(g_{\mathrm{min}})
	\] 
	with equality if and only if $(M,g)$ is isometric to $(M,g_{\mathrm{min}})$.
\end{thm}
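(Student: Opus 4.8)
The plan is to run the Besson--Courtois--Gallot barycenter (``natural map'') method, adapted to a reducible target. Write $\tilde M$ for the universal cover, let $\Gamma=\pi_1(M)$ act by deck transformations, and equip $\tilde M$ with both $\tilde g$ (the lift of an arbitrary $g$ on $M$) and $\tilde g_{\mathrm{min}}$. For $\tilde g_{\mathrm{min}}$ the manifold splits as $\tilde X_1\times\cdots\times\tilde X_k$ and its visual boundary is the spherical join $\partial\tilde M\cong\partial\tilde X_1\ast\cdots\ast\partial\tilde X_k$: a boundary point is a tuple $(\xi;a)=(\xi_1,\dots,\xi_k;a_1,\dots,a_k)$ with $\xi_i\in\partial\tilde X_i$, $a_i\ge 0$, $\sum a_i^2=1$, and Busemann functions add with these weights, $B_{(\xi;a)}(y)=\sum_i a_i B^{(i)}_{\xi_i}(y_i)$. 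The first ingredient I would establish is that the Patterson--Sullivan (Bowen--Margulis) density $\{\nu_x\}_{x\in\tilde M}$ of dimension $h_0:=h(g_{\mathrm{min}})$ is \emph{concentrated on the single weight-slice} $a_i=\sqrt{n_i/n}$: the geodesic flow of $g_{\mathrm{min}}$ preserves the fibration of the unit sphere bundle over the weight simplex, the fiber over $(a_i)$ carries entropy $\sum_i(h_i/\alpha_i)a_i$, this linear function is uniquely maximized on $\{\sum a_i^2=1,\ a_i\ge 0\}$ at $a_i=\sqrt{n_i/n}$ (which also recovers the stated value of $h_0$), and on that slice, canonically $\prod_i\partial\tilde X_i$, the density is the product $\bigotimes_i\nu^{(i)}_{x_i}$ of the factor Patterson--Sullivan densities.

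Given $g$ and any $c>h(g)$, I then form the $\Gamma$-equivariant family of finite measures on $\partial\tilde M$,
\[
  \mu^c_x=\int_{\tilde M}e^{-c\,d_g(x,z)}\,\nu_z\,dv_g(z),
\]
finiteness being exactly the condition $c>h(g)$, and define the natural map $F_c:\tilde M\to\tilde M$ by letting $F_c(x)$ be the $\tilde g_{\mathrm{min}}$-barycenter of $\mu^c_x$, i.e.\ the minimizer of $y\mapsto\int_{\partial\tilde M}B_y(\xi)\,d\mu^c_x(\xi)$. On a negatively curved symmetric space Busemann functions are strictly convex so the barycenter is a point, but on a product they are only convex; here the concentration fact saves us, since $\mu^c_x$ lives on the fixed slice $\prod_i\partial\tilde X_i$ and the functional splits as $\sum_i\sqrt{n_i/n}\int_{\partial\tilde X_i}B^{(i)}_{y_i}(\xi_i)\,d(\mathrm{pr}_i)_*\mu^c_x$, a sum of \emph{strictly} convex, coercive functions of the separate variables $y_i$ (the factor projections of $\mu^c_x$ being non-atomic of full support). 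Hence the barycenter is well defined, $F_c=(F_1,\dots,F_k)$ with each $F_i$ a Besson--Courtois--Gallot--type barycenter into $\tilde X_i$; $F_c$ is equivariant, descends to a self-map of $M$ inducing the identity on $\pi_1$, and since $M$ is aspherical and oriented this self-map has degree $1$.

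The analytic heart is the pointwise bound $|\mathrm{Jac}\,F_c|(x)\le(c/h_0)^n$. Differentiating the equilibrium identity $\int dB_{F_c(x)}\,d\bar\mu^c_x=0$ ($\bar\mu$ the normalization to a probability measure) expresses $dF_c$ through the averaged forms $H_x=\int\mathrm{Hess}\,B_{F_c(x)}\,d\bar\mu^c_x$ and $K_x=\int dB_{F_c(x)}\otimes dB_{F_c(x)}\,d\bar\mu^c_x$ on $T_{F_c(x)}\tilde M$, giving $|\mathrm{Jac}\,F_c|\le c^{n}\det(K_x)^{1/2}/\det(H_x)$. The product-measure structure of $\mu^c_x$ together with $F_c(x)$ being the barycenter makes the cross-terms $\int a_ia_j\,dB^{(i)}\otimes dB^{(j)}\,d\bar\mu^c_x$ ($i\ne j$) vanish, so $H_x$ and $K_x$ are block diagonal over the factors; on each block the explicit formulas for $\mathrm{Hess}\,B^{(i)}$ and $dB^{(i)}\otimes dB^{(i)}$ on the rank-one symmetric space $\tilde X_i$ (whose eigenvalue pattern depends on whether $\tilde X_i$ is real, complex or quaternionic hyperbolic, or the Cayley plane) feed into the sharp BCG algebraic inequality, and an AM--GM/Lagrange optimization of how the weights $\sqrt{n_i/n}$ and the budget $c$ split the total trace normalization across the factors yields exactly $(c/h_0)^n$, the optimal split being realized precisely by the numbers $\alpha_i$ in $g_{\mathrm{min}}$. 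Then
\[
  \mathrm{Vol}(M,g_{\mathrm{min}})=\deg(F_c)\,\mathrm{Vol}(M,g_{\mathrm{min}})=\int_M\mathrm{Jac}\,F_c\,dv_g\le\int_M|\mathrm{Jac}\,F_c|\,dv_g\le(c/h_0)^n\,\mathrm{Vol}(M,g),
\]
and letting $c\downarrow h(g)$ gives $h(g)^n\,\mathrm{Vol}(M,g)\ge h_0^n\,\mathrm{Vol}(M,g_{\mathrm{min}})$, which under the normalization $\mathrm{Vol}(M,g)=\mathrm{Vol}(M,g_{\mathrm{min}})$ is the inequality $h(g)\ge h(g_{\mathrm{min}})$.

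For rigidity, suppose $h(g)=h_0$; then all inequalities above become equalities in the limit $c\to h(g)$. As in BCG one cannot set $c=h(g)$ directly, so I would extract from $\{F_c\}$ a limiting map $F$ using the uniform Lipschitz and properness bounds that equality forces; $F$ has degree one and saturates the Jacobian bound almost everywhere. Equality in the per-factor algebraic lemma forces $dF$ to be, block by block, a homothety with ratio dictated by $\alpha_i$, while equality in the weight--budget optimization forces the infinitesimal product-of-rank-one structure of $g$ with exactly the same relative scalings; standard arguments (the equality discussion of BCG in each factor, plus the fact that a degree-one local isometry between closed manifolds is an isometry) then upgrade $F$ to an isometry $(M,g)\to(M,g_{\mathrm{min}})$. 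The main obstacle, absent in the irreducible case, is handling the product structure in the Jacobian estimate --- controlling the off-diagonal Hessian/moment terms and carrying out the budget-split optimization so that the sharp constant $(c/h_0)^n$ is obtained and the $\alpha_i$ are pinned down --- together with the $c\to h(g)$ limiting argument in the equality case, where one must show not merely that each factor is rigid but that the factors are assembled with the correct relative scaling; the Patterson--Sullivan concentration on one weight-slice is the structural input that makes all of this tractable.
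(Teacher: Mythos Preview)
Your strategy is exactly that of Connell--Farb, whom the paper cites for this theorem; the paper does not reprove it but reproduces the central Jacobian estimate as Lemma~\ref{lemma:jacobian bound} in the spherical-volume context. The ingredients match: Patterson--Sullivan measures supported on the Furstenberg boundary $\partial_F\tilde M\cong\prod_i\partial X_i$ (Albuquerque's concentration result), a barycenter map into the product, and a block reduction to the rank-one BCG algebraic inequality. There is, however, a real gap in how you block-diagonalize. You assert that both $H_x$ and $K_x$ are block diagonal. The Hessian average $H_x$ is, since $\mathrm{Hess}\,B=\bigoplus_i c_i\,\mathrm{Hess}\,B^{(i)}$ automatically. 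But your justification for the moment form $K_x=\int dB\otimes dB\,d\bar\mu^c_x$---``the product-measure structure of $\mu^c_x$ together with the barycenter condition''---does not work: $\mu^c_x=\int_{\tilde M}e^{-cd_g(x,z)}\bigl(\bigotimes_i\nu^{(i)}_{z_i}\bigr)\,dv_g(z)$ is a \emph{mixture} of product measures, not itself a product. Writing $A_i(z)=\int dB^{(i)}\,d\nu^{(i)}_{z_i}$, the $(i,j)$ off-block of $K_x$ is $\int w(z)\,A_i(z)\otimes A_j(z)\,dv_g$; the barycenter condition gives only $\int w\,A_i=0$ for each $i$, not that this covariance vanishes.

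The fix, used by Connell--Farb and recorded in the paper's Lemma~\ref{lemma:jacobian bound}, is not to kill the off-blocks of $K_x$ but to bound its determinant via Fischer's inequality $\det K_x\le\prod_i\det K_x^{(i,i)}$ for positive semidefinite block matrices. Each diagonal block depends only on the marginal $(\mathrm{pr}_i)_*\bar\mu^c_x$ on $\partial X_i$ and has $g_i$-trace $1$, so the rank-one BCG inequality $(\det K^{(i)})^{1/2}/\det H^{(i)}\le(\sqrt{n_i}/h_i)^{n_i}$ applies factor by factor; the product over $i$ already yields the sharp constant with no further ``budget-split'' optimization, since the definition of $g_{\mathrm{min}}$ has the optimal scaling built in. With this correction your outline is the Connell--Farb proof.
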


The main goal of this paper is to study the stability of the minimal entropy rigidity for products of negatively curved symmetric spaces. We consider the following question: given a closed oriented $n$-manifold $M$ that is locally isometric to a product of negatively curved symmetric spaces, how stable is the minimal entropy rigidity? 

The question of stability for the minimal entropy rigidity has been studied by various authors, see \cite{Courtois:1999aa,Bessieres:2012aa,Guillarmou:2019aa,Guillarmou:2022aa,butt2022quantitative,song2025entropy}. In particular, the most relevant work to this article is the following stability result for negatively curved symmetric spaces recently proved by A. Song \cite{song2025entropy}, which essentially means that the minimal entropy rigidity for negatively curved locally symmetric spaces is stable after removing a sequence of negligible subsets. Throughout this paper, we will use $\mathrm{Vol}(A,g)$ and $\mathrm{Area}(\partial A,g)$ to denote the $n$-volume of a subset $A$ and the $(n-1)$-volume of its boundary $\partial A$ with the metric $g$, respectively.

\begin{thm}[\textbf{Minimal entropy stability for negatively curved symmetric spaces} \cite{song2025entropy}]
	\label{thm:intro Song}
	 Let $(M,g_0)$ be a closed oriented manifold of dimension at least $3$ where $g_0$ is a negatively curved locally symmetric metric. Let $\{g_i\}$ be a sequence of Riemannian metrics on $M$ with $\mathrm{Vol}(M,g_i)=\mathrm{Vol}(M,g_0)$. If $h(g_i)\to h(g_0)$, then there is a sequence of smooth subsets $Z_i\subset M$ with
	\begin{align*}
		\lim_{i\to\infty}\mathrm{Vol}(Z_i,g_i)=\lim_{i\to\infty}\mathrm{Area}(\partial Z_i,g_i)=0
	\end{align*}
	such that $(M\setminus Z_i,d_{g_i|_{M\setminus Z_i}},d\mathrm{vol}_{g_i})$ converges to $(M,d_{g_0},d\mathrm{vol}_{g_0})$ in the measured Gromov-Hausdorff topology. Here, $d_{g_i|_{M\setminus Z_i}}$ is the distance induced by the metric $g_i$ on $M\setminus Z_i$, i.e., 
	\[
		d_{g_i|_{M\setminus Z_i}}(x,y)=\inf_{\gamma}\mathrm{Length}(\gamma,g_i)
	\] where the infimum is taken over all rectifiable curves connecting $x$ and $y$ in $M\setminus Z_i$.
\end{thm}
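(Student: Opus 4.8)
The plan is to recast the entropy-minimizing sequence as a minimizing sequence for A. Song's \emph{spherical Plateau problem} and then read off the geometry from the convergence of the associated integral currents. Let $\Gamma=\pi_1(M)$ act on $\mathcal H=\ell^2(\Gamma)$ by the left regular representation, let $S_{\mathcal H}$ be its unit sphere, and let $S_{\mathcal H}/\Gamma$ be the (singular) quotient metric space. To a metric $g$ on $M$ with basepoint $\tilde x_0\in\tilde M$ and to each $s>h(g)$ one associates the $\Gamma$-equivariant map $\Phi_{g,s}\colon\tilde M\to S_{\mathcal H}$ obtained by normalizing $x\mapsto\big(\gamma\mapsto e^{-s\,d_{\tilde g}(x,\gamma\tilde x_0)/2}\big)$, which lies in $\ell^2(\Gamma)$ precisely because $s$ exceeds the critical exponent. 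This map descends to $M\to S_{\mathcal H}/\Gamma$ and pushes the fundamental class to an integral current whose mass is controlled, via the Besson--Courtois--Gallot Jacobian (barycentre) estimate, by $\big(h(g)/h(g_0)\big)^n\,\mathrm{Vol}(M,g)$ times the relevant dimensional density in the limit $s\downarrow h(g)$. Taking the infimum of masses over the homology class defines $\mathrm{SphereVol}(M)$, and the rigidity statement in the Besson--Courtois--Gallot theorem, specialized to the rank-one case, shows the natural map of $(M,g_0)$ is homothetic onto a geodesic subsphere and realizes this infimum, so $(M,g_0)$ is, up to the fixed normalization, a \emph{spherical Plateau solution}.

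Now, if $\mathrm{Vol}(M,g_i)=\mathrm{Vol}(M,g_0)$ and $h(g_i)\to h(g_0)$, then choosing $s_i\downarrow h(g_i)$ appropriately the currents $T_i:=(\Phi_{g_i,s_i})_\#\llbracket M\rrbracket$ satisfy $\partial T_i=0$ and $\mathbf M(T_i)\to\mathrm{SphereVol}(M)$: the upper bound is the Jacobian estimate, the lower bound is that each $T_i$ is a competitor. By the Ambrosio--Kirchheim compactness theorem for integral currents in a complete metric space, applied $\Gamma$-equivariantly in $S_{\mathcal H}$ (or directly in $S_{\mathcal H}/\Gamma$), a subsequence converges in the flat topology, with masses converging, to an area-minimizing integral current $T_\infty$. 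At this point one invokes the \textbf{intrinsic uniqueness/rigidity of the spherical Plateau solution} in the negatively curved locally symmetric setting: $T_\infty$, as a metric current equipped with its intrinsic distance and mass measure, is isometric to $(M,d_{g_0},d\mathrm{vol}_{g_0})$. This is where one upgrades the equality case of the barycentre inequality to a classification of the blow-ups (tangent cones) of $T_\infty$ and combines it with the interior regularity of area-minimizing currents.

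To turn current convergence into measured Gromov--Hausdorff convergence one must manufacture the sets $Z_i$. The crucial analytic input is that the entropy inequality carries a \emph{deficit}: since $\mathbf M(T_i)$ and $\mathrm{Vol}(M,g_i)$ both approach their extremal values, $\int_M\big(\mathrm{Jac}_{\max}-\mathrm{Jac}\,\Phi_{g_i,s_i}\big)\,d\mathrm{vol}_{g_i}\to 0$, where $\mathrm{Jac}_{\max}$ is the pointwise bound in the barycentre estimate. A \textbf{quantitative rigidity} statement for that pointwise algebraic inequality then shows that wherever $\mathrm{Jac}\,\Phi_{g_i,s_i}$ is within $\delta$ of $\mathrm{Jac}_{\max}$, the differential $d\Phi_{g_i,s_i}$ is $\epsilon(\delta)$-close to a homothety, hence $\Phi_{g_i,s_i}$ is $\epsilon$-almost-isometric near that point after the fixed rescaling. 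Setting $Z_i:=\{x:\mathrm{Jac}\,\Phi_{g_i,s_i}(x)<\mathrm{Jac}_{\max}-\delta_i\}$ for a well-chosen $\delta_i\to 0$, Chebyshev gives $\mathrm{Vol}(Z_i,g_i)\to 0$, and a coarea/Sard argument applied to (a mollification of) the Jacobian lets us perturb $\delta_i$ so that $\mathrm{Area}(\partial Z_i,g_i)\to 0$ as well; one may also smooth $Z_i$. On $M\setminus Z_i$ the map $\Phi_{g_i,s_i}$ is an almost-isometric embedding whose image Hausdorff-approximates $\mathrm{supp}\,T_\infty\cong(M,g_0)$ and whose pushforward of $d\mathrm{vol}_{g_i}$ approximates $d\mathrm{vol}_{g_0}$.

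The main obstacle is this last point: controlling the \emph{intrinsic} distance $d_{g_i|_{M\setminus Z_i}}$, not merely the restriction to $\Phi_{g_i,s_i}(M\setminus Z_i)$ of the ambient spherical distance. A priori, excising $Z_i$ could force long detours even though $\Phi_{g_i,s_i}$ is infinitesimally almost-isometric on $M\setminus Z_i$, and without a boundary bound on $Z_i$ this genuinely fails — the paper's counterexample in the product case exploits exactly this. The remedy is to use $\mathrm{Area}(\partial Z_i,g_i)\to 0$: a region of $(M,g_i)$ bounded by a hypersurface of vanishing $(n-1)$-area must, via the almost-isometry to a neighborhood of $\mathrm{supp}\,T_\infty$ and the controlled isoperimetric/filling geometry of $(M,g_0)$, be a small ``bubble'' that can be bypassed with negligible extra length; pushing near-minimizing curves off such bubbles then shows $d_{g_i|_{M\setminus Z_i}}$ and $d_{g_0}$ are $o(1)$-close, which combined with the measure approximation yields the measured Gromov--Hausdorff convergence. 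Making this bubble/detour estimate quantitative and uniform in $i$ is, I expect, the technical heart of the proof.
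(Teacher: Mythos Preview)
Note first that this theorem is Song's result, quoted but not proved in the present paper; the paper only sketches his argument in Remark~\ref{remark:equidistribution} and mirrors its structure when proving Proposition~\ref{proposition:stability of the product}. Comparing your proposal against that description: the overall architecture (natural maps $\mathcal P_{c_i}$ into $S^\infty/\lambda_\Gamma(\Gamma)$, a minimizing sequence for the spherical Plateau problem, intrinsic uniqueness of the limit, extraction of good regions via Jacobian deficit and coarea) is right. One correction: the almost-isometry to $(M,g_0)$ comes from the \emph{composition} $\varphi_i=\mathrm{Bar}\circ\phi_i$ with the barycenter map, not from your $\Phi_{g_i,s_i}$ alone; it is $\mathrm{Jac}(\mathrm{Bar})$ that the Besson--Courtois--Gallot inequality bounds, and near-saturation of that bound is what forces $d\varphi_i$ to be close to a homothety (cf.\ Lemma~\ref{lemma:differential and length approx}).

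The substantive gap is the last step. Your ``bubble bypass'' mechanism---using $\mathrm{Area}(\partial Z_i)\to 0$ together with the isoperimetric profile of $(M,g_0)$ to argue that detours around $Z_i$ cost negligible extra length---is not how Song closes the argument, and as stated it does not work: the almost-isometry to the model holds only \emph{outside} $Z_i$, so the model's filling geometry gives no direct control on the $g_i$-geometry of curves that must skirt $\partial Z_i$. According to Remark~\ref{remark:equidistribution} (and Step~4 of the proof of Proposition~\ref{proposition:stability of the product}), Song instead enlarges $\Omega_i$ to regions $A_i$ with $\mathrm{Area}(\partial A_i,g_i)\to 0$, passes to an intrinsic flat limit, and obtains a $1$-Lipschitz bijection $\Psi:(M,g_0)\to(\mathrm{spt}\,S_\infty,d_\infty)$. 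The decisive point is then to show $\Psi$ is an \emph{isometry}, and this is done by a contradiction that uses the \emph{equidistribution of geodesic spheres} in negatively curved symmetric spaces: if $d_\infty$ were strictly smaller than $d_{g_0}$ at some pair of points, equidistribution forces the volume of large balls in the limit to grow strictly faster than $e^{h(g_0)R}$, contradicting $h(g_i)\to h(g_0)$. Once $\Psi$ is an isometry one simply takes $Z_i:=M\setminus A_i$. This equidistribution step is precisely the rank-one feature that fails for products and underlies the paper's counterexample; your isoperimetric idea makes no such distinction between the two settings, which is a strong sign it is not the correct ingredient here.
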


In this paper, we investigate the similar type of stability for products of negatively curved symmetric spaces. One may expect that the stability behavior for the product case is similar to the negatively curved symmetric case. However, the product case turns out to be more delicate, as we show by constructing a counterexample that the stability analogous to Theorem \ref{thm:intro Song} does not hold for products of negatively curved symmetric spaces.

\begin{alphathm}\label{thm:intro_counterexample}
	Let $M=\mathbb{H}^n\times \mathbb{H}^n/\Gamma_1\times\Gamma_2,n\ge 3,$ where $\Gamma_1$ and $\Gamma_2$ are cocompact lattices in $\mathrm{Isom}(\mathbb{H}^n)$. Denote by $g_0$ the product metric on $\mathbb{H}^n\times \mathbb{H}^n$.
	Then there exists a sequence of Riemannian metrics $\{g_i\}$ on $M$ with $\mathrm{Vol}(M,g_i)=\mathrm{Vol}(M,g_0)$ satisfying 
	\begin{enumerate}
		\item $h(g_i)\to h(g_0)$, and
		\item there does not exist a sequence of smooth subsets $Z_i\subset M$ with
		\begin{align*}
			\lim_{i\to\infty}\mathrm{Vol}(Z_i,g_i)=\lim_{i\to\infty}\mathrm{Area}(\partial Z_i,g_i)=0,
		\end{align*}
		such that $(M\setminus Z_i,d_{g_i|_{M\setminus Z_i}},d\mathrm{vol}_{g_i})$ converges to $(M,d_{g_0},d\mathrm{vol}_{g_0})$ in the measured Gromov-Hausdorff topology.
	\end{enumerate}
\end{alphathm}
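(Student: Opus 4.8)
\noindent\textbf{Proof strategy for Theorem~\ref{thm:intro_counterexample}.}
The plan is to deform $g_0$ only inside a thin collar of a fixed hypersurface of definite $(n-1)$--volume, in a way that is negligible for the volume entropy at leading order, yet whose effect on the intrinsic geometry cannot be healed by removing any subset with controlled boundary area. Write $M=N_1\times N_2$ with $N_j=\mathbb H^n/\Gamma_j$, so that $g_0=g_1\oplus g_2$; by the symmetry of the two factors one has $g_{\mathrm{min}}=g_0$ and $h(g_0)=\sqrt2\,(n-1)$. Fix a small geodesic ball $B\subset N_1$ and set $\Sigma:=\partial B\times N_2\subset M$, an embedded hypersurface with $a:=\mathrm{Area}(\Sigma,g_0)=\mathrm{Area}(\partial B,g_1)\,\mathrm{Vol}(N_2,g_2)>0$ independent of $i$, separating $M=M^+\sqcup_\Sigma M^-$ with $M^+\cong B\times N_2$. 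I would define $g_i$ to agree with $g_0$ off a collar $\Sigma\times(-\delta_i,\delta_i)$, with $\delta_i\to0$, and inside this collar to keep the $N_2$--directions as in $g_0$ while performing an $i$--dependent deformation affecting only the directions tangent to the first factor $N_1$ (a ``pinch, long thin tube, pinch back'' carried out along the sphere $\partial B\subset N_1$ together with the normal direction). The key design constraint is that the deformation be ``thin'' only in directions whose lift to $\mathbb H^n\times\mathbb H^n$ stays bounded, never along the $N_2$--directions, whose lift is a genuine hyperbolic space. After the deformation $g_i=g_0$ off a set of $n$--volume $o(1)$, and $\mathrm{Vol}(M,g_i)=\mathrm{Vol}(M,g_0)+o(1)$; rescaling $g_i$ by a constant $\lambda_i\to1$ makes the volumes equal exactly.

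For the entropy, the inequality $h(g_i)\ge h(g_0)$ is immediate from Theorem~\ref{thm:intro CF} since $\mathrm{Vol}(M,g_i)=\mathrm{Vol}(M,g_{\mathrm{min}})$. For $\limsup_i h(g_i)\le h(g_0)$ I would estimate $\mathrm{Vol}(B_{\tilde g_i}(o,R),\tilde g_i)$ directly in $(\tilde M,\tilde g_i)=(\mathbb H^n\times\mathbb H^n,\tilde g_i)$: away from the $\Gamma$--periodic lift of the collar one has $\tilde g_0$, while the lift of the collar is a $\Gamma$--periodic union of pieces of the form $(\text{bounded set})\times\mathbb H^n\times(\text{interval})$, inside which a radius--$r$ ball grows at rate at most $n-1<\sqrt2\,(n-1)=h(g_0)$ because the only noncompact factor contributing exponential growth is a single copy of $\mathbb H^n$. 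Summing these contributions over the $\lesssim e^{h(g_0)R}$ fundamental domains met by $B_{\tilde g_i}(o,R)$, and controlling the distance distortion caused by the collar on paths in $\tilde M$, yields $\mathrm{Vol}(B_{\tilde g_i}(o,R),\tilde g_i)\le e^{(h(g_0)+o_R(1))R}$ and hence $h(g_i)\le h(g_0)+o(1)$; together with $\lambda_i\to1$ this gives $h(g_i)\to h(g_0)$.

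For non--removability, suppose $Z_i\subset M$ are smooth with $\mathrm{Vol}(Z_i,g_i)\to0$, $\mathrm{Area}(\partial Z_i,g_i)\to0$, and $(M\setminus Z_i,d_{g_i|_{M\setminus Z_i}},d\mathrm{vol}_{g_i})\to(M,d_{g_0},d\mathrm{vol}_{g_0})$ in the measured Gromov--Hausdorff sense. The argument I would run is that the deformed collar is detectable on a scale bounded below independently of $i$: if $Z_i$ does not contain an entire $\Sigma$--cross--section up to a subset of $\Sigma$ of area $o(1)$, then $M\setminus Z_i$ either still carries a full cross--section of the deformed collar, an intrinsic region that admits no near--isometric map into any neighbourhood of $\Sigma$ in $(M,g_0)$ (contradicting the convergence of the spaces), or $Z_i$ separates $M$ along $\Sigma$ and $M\setminus Z_i$ becomes disconnected (hence cannot converge to the connected space $(M,g_0)$). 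Since $\Sigma$ separates $M$, one of these occurs unless $Z_i$ contains a full $\Sigma$--cross--section minus $o(1)$ area; a coarea/monotonicity bound then forces $\mathrm{Area}(\partial Z_i,g_i)\ge a-o(1)$, contradicting $\mathrm{Area}(\partial Z_i,g_i)\to0$.

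The hard part, and the place where the product structure is genuinely used, is making Steps~1 and~3 compatible: one must choose the collar deformation so that it is simultaneously (i) of fixed transverse ``size'' across $\Sigma$, so that healing it provably costs $(n-1)$--volume at least $\approx a$, and (ii) entropy--cheap, so that $h(g_i)\to h(g_0)$ despite being non--negligible at a fixed scale --- a tension that only the product geometry allows one to resolve, by confining the deformation to directions with bounded lift. The most delicate single step is ruling out, in Step~3, that a cleverly chosen highly disconnected $Z_i$ of small $(n-1)$--volume --- e.g.\ one cutting the collar transversally into many thin tubes, or retaining only a thin ``bridge'' across $\Sigma$ --- could still yield a space converging to $(M,g_0)$. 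This requires knowing precisely which intrinsic metric--measure space the entropy--minimizing sequence must converge to and certifying that the deformed collar is incompatible with it on a fixed scale, which is exactly the role of the intrinsic uniqueness of the spherical Plateau solution for products of negatively curved symmetric spaces proved later in the paper; it is also precisely the input that fails to give the stronger conclusion here, in contrast with Theorem~\ref{thm:intro Song}.
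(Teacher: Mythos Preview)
Your proposal differs from the paper's proof in both the construction and the failure argument, and the latter contains a genuine gap that your proposed fix does not address.

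\textbf{Construction and entropy.} The paper does not stretch or insert a tube; it \emph{shrinks} the first-factor metric by a fixed constant $\eta<1$ on a thin neighbourhood of a hypersurface $S$ whose lift is $U\times C$ with $C$ a totally geodesic hypersurface in the second $\mathbb H^n$. The limiting metric $d_\eta$ is a shortcut metric, strictly smaller than $d_{g_0}$ on some pairs of points. The entropy computation hinges on a fact specific to products that is absent from your outline: the volume of geodesic spheres in $\mathbb H^n\times\mathbb H^n$ concentrates near the diagonal direction $\theta=\pi/4$ (where $\theta=\arctan(r_1/r_2)$), and the paper shows (Lemma~\ref{lemma:conical region}) that in a cone $R_c=\{|\theta-\pi/4|\le c\}$ one has $d_\eta=d_{g_0}$, because a shortcut through $S$ forces large turning angles that Lemma~\ref{lemma:shorter path} rules out. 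This is the precise mechanism by which the product structure allows a deformation that is ``entropy-cheap'' yet of fixed size; your heuristic that the collar lifts to (bounded)$\times\mathbb H^n\times$(interval) is a different (and vaguer) argument, and your ``pinch, long thin tube, pinch back'' is never specified precisely enough to check either the volume normalisation or the entropy bound.

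\textbf{Non-removability.} Here your proposal has an acknowledged gap, and the tool you reach for to close it is the wrong one. The intrinsic uniqueness of the spherical Plateau solution (Theorem~\ref{thm:intro_uniqueness of the spherical plateau solution}) is used in the paper \emph{only} for the positive stability result (Theorem~\ref{thm:intro_stability of the product}); the counterexample is proved independently and does not invoke it at all. The paper's argument is concrete and elementary: given hypothetical $Z_i$ with $\mathrm{Vol}(Z_i,g_i),\,\mathrm{Area}(\partial Z_i,g_i)\to 0$, one foliates a neighbourhood of $S$ by $2$-planes $P_x$ indexed by an $(n-2)$-dimensional parameter $x$, and Fubini produces a sequence $x_i$ with $\mathscr H^2(Z_i\cap P_{x_i})\to 0$ and $\mathscr H^1(\partial Z_i\cap P_{x_i})\to 0$. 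On that good $2$-plane one builds, inside $M\setminus Z_i$, two pairs of short paths $\sigma_i,\hat\sigma_i$ (reflections across $S$) whose $g_i$-lengths converge to the $d_\eta$-distance $L$ between their endpoints; the obstruction is then that in any GH limit these would yield two distinct pairs of points whose length-minimising paths share a common segment, which is impossible in the smooth Riemannian manifold $(M,g_0)$. This slicing-plus-branching-geodesic argument is exactly what disposes of the ``cleverly chosen highly disconnected $Z_i$'' you worry about, and it does so without any appeal to spherical Plateau solutions. Your trichotomy (full cross-section vs.\ collar survives vs.\ disconnection) does not obviously survive the thin-bridge scenario you yourself raise, and nothing in Theorem~\ref{thm:intro_uniqueness of the spherical plateau solution} would rule it out.
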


The concrete construction of the example is provided in Section \ref{sec:counterexample}. A key observation used to construct the counterexample is that the volume growth of geodesic balls in the product space is concentrated in a certain direction. This allows one to construct a sequence of metrics that shortens the distance in the other directions while the volume entropy still converges to the minimal entropy (See Figure \ref{fig:comparison}). Such a construction is not possible in the negatively curved symmetric case due to the equidistribution property of geodesic spheres in those spaces (cf. Remark \ref{remark:equidistribution}).

Although it is unexpected that the analogous stability does not hold for the product case, we still obtain the following weaker stability, in which the $n$-volume of the removed subsets $Z_i$ still converges to zero, whereas the $(n-1)$-volume of $\partial Z_i$ may not.

\begin{alphathm}\label{thm:intro_stability of the product}
	Let $(M,g_0)$ be a closed oriented $n$-manifold defined as in Theorem \ref{thm:intro CF}. Suppose that $\{g_i\}$ is a sequence of Riemannian metrics on $M$ with $\mathrm{Vol}(M,g_i)=\mathrm{Vol}(M,g_m)$ satisfying
	\begin{equation*}
		\lim_{i\to\infty}h(g_i)=h(g_m),
	\end{equation*}
	where $g_m=\frac{h(g_{\mathrm{min}})^2}{4n}g_{\mathrm{min}}$ is the normalized metric of $g_{\mathrm{min}}$ defined in Theorem \ref{thm:intro CF}. Then there exists a sequence of smooth subsets $Z_i\subset M$ with
	\begin{equation*}
		\lim_{i\to\infty}\mathrm{Vol}(Z_i,{g_i})=0
	\end{equation*} such that $(M\setminus Z_i,d_{g_i|_{M\setminus Z_i}},d\mathrm{vol}_{g_i})$ converges to $(M,d_{g_m},d\mathrm{vol}_{g_m})$ in the measured Gromov-Hausdorff topology.
\end{alphathm}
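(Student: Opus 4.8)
The plan is to run Song's spherical Plateau machinery \cite{Song:2023aa}, feeding in our intrinsic uniqueness result in place of the uniqueness of the Plateau solution available for negatively curved symmetric spaces. Write $\Gamma=\pi_1(M)$ and let $S^\infty/\Gamma$ denote the quotient of the infinite-dimensional sphere by the unitary $\Gamma$-action entering the spherical volume construction, so that $S^\infty/\Gamma$ is a model for $B\Gamma$. To each metric $g_i$ I attach the natural ($\Gamma$-equivariant) map $\psi_i\colon(M,g_i)\to S^\infty/\Gamma$ and the integral current $C_i:=\psi_{i\#}\llbracket M\rrbracket$ in the homology class of the fundamental class of $M$. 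The Besson--Courtois--Gallot pointwise Jacobian estimate \cite{Besson:1996aa} gives
\[ \mathbf{M}(C_i)\ \le\ \int_M |\mathrm{Jac}\,\psi_i|\,d\mathrm{vol}_{g_i}\ \le\ \Big(\tfrac{h(g_i)}{2\sqrt n}\Big)^{n}\mathrm{Vol}(M,g_i). \]
Since $g_m=\frac{h(g_{\mathrm{min}})^2}{4n}g_{\mathrm{min}}$ is normalized exactly so that $h(g_m)=2\sqrt n$ and the natural map of the model space realizes $(M,g_m)$ isometrically inside $S^\infty/\Gamma$, one has $\mathbf{M}(C_{g_m})=\mathrm{Vol}(M,g_m)$, and — by the Besson--Courtois--Gallot technique as extended to products by Connell--Farb (the rigidity of Theorem~\ref{thm:intro CF} in its spherical-volume form) — this number is the infimum of $\mathbf{M}$ over all integral currents in the class. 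Combined with $\mathrm{Vol}(M,g_i)=\mathrm{Vol}(M,g_m)$ and $h(g_i)\to h(g_m)$ this yields $\mathbf M(C_i)\to\mathrm{Vol}(M,g_m)$, i.e.\ $(C_i)$ is a minimizing sequence; by the compactness theory for integral currents of bounded mass in $S^\infty/\Gamma$, a subsequence converges in the flat sense to a spherical Plateau solution $C_\infty$ with $\mathbf M(C_\infty)=\mathrm{Vol}(M,g_m)$ and with no loss of mass, $\|C_i\|\rightharpoonup\|C_\infty\|$.

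Next I invoke our intrinsic uniqueness theorem for products of negatively curved symmetric spaces: the intrinsic integral current space underlying \emph{any} spherical Plateau solution is, as a metric--measure space, isometric to $(M,d_{g_m},d\mathrm{vol}_{g_m})$ — the normalizing constant being $1$ precisely by the choice of $g_m$. Thus $(\mathrm{spt}\,C_\infty,d_{\mathrm{intr}},\|C_\infty\|)\cong(M,d_{g_m},d\mathrm{vol}_{g_m})$, and since this limit does not depend on the subsequence, the full sequence $C_i$ converges to it.

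It remains to transfer this to the metrics $g_i$. From the displayed inequality, $\int_M|\mathrm{Jac}\,\psi_i|\,d\mathrm{vol}_{g_i}\to\mathrm{Vol}(M,g_i)$ while $|\mathrm{Jac}\,\psi_i|\le(1+\epsilon_i)^n$ with $\epsilon_i:=\frac{h(g_i)}{2\sqrt n}-1\to 0$, so $|\mathrm{Jac}\,\psi_i|\to 1$ in $L^1(M,d\mathrm{vol}_{g_i})$. I then choose $\delta_i\to 0$ slowly enough that the sets $Z_i\supseteq\{\,|\mathrm{Jac}\,\psi_i|<1-\delta_i\,\}$ still have $\mathrm{vol}_{g_i}(Z_i)\to 0$, enlarging $Z_i$ within this volume budget to make it a smooth subset and to keep $M\setminus Z_i$ connected. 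On $M\setminus Z_i$ every singular value of $d\psi_i$ is $\le 1+\epsilon_i$ while their product is $\ge 1-\delta_i$, so each singular value lies in an interval shrinking to $\{1\}$; hence $\psi_i|_{M\setminus Z_i}$ is a $(1\pm o(1))$-bi-Lipschitz immersion and $\mathcal H^n(\psi_i(Z_i))\le(1+\epsilon_i)^n\mathrm{vol}_{g_i}(Z_i)\to 0$. Feeding this metric control into the flat-plus-mass convergence $C_i\to C_\infty$ and applying a Sormani--Wenger/Song-type comparison that promotes intrinsic flat plus volume convergence to pointed Gromov--Hausdorff convergence after discarding sets of small $n$-volume, I obtain that $(M\setminus Z_i,\,d_{g_i|_{M\setminus Z_i}},\,d\mathrm{vol}_{g_i})$ converges in the measured Gromov--Hausdorff topology to $(\mathrm{spt}\,C_\infty,d_{\mathrm{intr}},\|C_\infty\|)=(M,d_{g_m},d\mathrm{vol}_{g_m})$, which is the assertion. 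I stress that one cannot also force $\mathrm{Area}(\partial Z_i,g_i)\to 0$: the set on which $\psi_i$ degenerates may be a collapsing slab of small volume but large cross-section, which is exactly the obstruction realized in Theorem~\ref{thm:intro_counterexample}.

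I expect the two load-bearing points to be the intrinsic uniqueness theorem of the middle paragraph — proved separately and carrying the main novelty — and the last step: converting the weak convergence of the spherical currents into genuine measured Gromov--Hausdorff convergence of the removed-set spaces. The delicate issues there are to choose $Z_i$ with $\mathrm{vol}_{g_i}(Z_i)\to 0$ while ensuring $M\setminus Z_i$ stays connected and its intrinsic distance does not develop spurious large values, and to keep careful track of why only the $n$-volume of $Z_i$, and not the $(n-1)$-volume of $\partial Z_i$, can be made to vanish — this dichotomy being exactly what separates Theorem~\ref{thm:intro_stability of the product} from the stronger statement of Theorem~\ref{thm:intro Song}.
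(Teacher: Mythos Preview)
Your overall architecture is right, and the first two paragraphs match the paper's strategy closely: one pushes $(M,g_i)$ into $S^\infty/\lambda_\Gamma(\Gamma)$ by natural maps, obtains a minimizing sequence for the spherical Plateau problem, and uses the intrinsic uniqueness theorem to identify any Plateau limit with $(M,g_m)$. The paper also composes with the barycenter map to get $\varphi_i:=\mathrm{Bar}\circ\phi_i:(M,g_i)\to(M,g_m)$, and then extracts open sets $\Omega_i$ on which $|\varphi_i^\ast g_m-g_i|_{g_i}\to 0$ with $\mathrm{Vol}(\Omega_i,g_i)\to\mathrm{Vol}(M,g_m)$; your $M\setminus Z_i$ is essentially this $\Omega_i$.

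The genuine gap is the last step, where you invoke an unspecified ``Sormani--Wenger/Song-type comparison that promotes intrinsic flat plus volume convergence to Gromov--Hausdorff convergence after discarding sets of small $n$-volume.'' No such black box is available here, and the paper is explicit about why: even though $g_i$ is $C^0$-close to $\varphi_i^\ast g_m$ on $\Omega_i$, the \emph{length metric} $d_{g_i|_{\Omega_i}}$ can be strictly larger than $d_{g_m}$, because geodesics for $g_m$ may be forced to detour around $M\setminus\Omega_i$. In Song's rank-one case this is repaired by proving that the limit map $\Psi:(M,g_m)\to(\mathrm{spt}\,S_\infty,d_\infty)$ is an \emph{isometry}, using equidistribution of geodesic spheres; in the product case $\Psi$ is only $1$-Lipschitz and bi-Lipschitz, and the equidistribution argument is unavailable (indeed its failure is what drives Theorem~\ref{thm:intro_counterexample}). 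So one cannot simply set $Z_i=M\setminus\Omega_i$ and appeal to Song.

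What the paper does instead, and what your proposal is missing, is a concrete short-cut construction. From Proposition~\ref{proposition:stability of the product} one gets, in addition to $\Omega_i$, larger sets $A_i\supset\Omega_i$ such that $f_i=\Psi\circ\varphi_i|_{A_i}$ is an $\varepsilon$-isometry onto $(\mathrm{spt}\,S_\infty,d_\infty)$. Since $\Psi$ is $1$-Lipschitz, for any $x,y\in\Omega_i$ there is a path in $A_i$ of $g_i$-length at most $d_{g_m}(\varphi_i(x),\varphi_i(y))+\delta_i$. One then picks a $\delta_i$-net $S_i\subset\Omega_i$ and builds a finite graph $G_i\subset A_i$ whose edges are such paths, with lengths pinched between $d_{g_m}(\varphi_i(x),\varphi_i(y))\pm\varepsilon_i$. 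A direct two-sided estimate shows $(G_i,d_{g_i|_{G_i}})$ is a GH-approximation of $(M,d_{g_m})$, and one sets $\tilde\Omega_i:=\Omega_i\cup\tilde G_i$ for a thin tubular neighborhood $\tilde G_i$ of $G_i$, with $Z_i:=M\setminus\tilde\Omega_i$. The graphs supply the missing short geodesics without allowing distances to drop below $d_{g_m}$ (which would happen if one kept all of $A_i$). This is the step where control of $\mathrm{Area}(\partial Z_i,g_i)$ is necessarily lost: the tubes $\tilde G_i$ have small volume but uncontrolled boundary area, which is exactly the codimension-one obstruction you allude to at the end.
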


Note that Theorem \ref{thm:intro_stability of the product} is optimal in terms of the `size' of the removed subsets $Z_i$. To quantify the strength of the stability, one can consider the notion of the coarse dimension of the sequence of boundaries $\partial Z_i$ suggested in \cite[Remark 3.9]{song2025entropy}. Using this notion, Theorem \ref{thm:intro_stability of the product} can be interpreted as that the minimal entropy rigidity for products of negatively curved symmetric spaces is stable up to a codimension $1$ subset, and it cannot be improved due to Theorem \ref{thm:intro_counterexample}. Indeed, the counterexample implies that the $(n-1)$-volume of $\partial Z_i$ may not converge to zero in general, thus the coarse dimension of $\partial Z_i$ must be at least $n-1$.

We also remark that Theorem \ref{thm:intro_stability of the product} and the stability established by Song (Theorem \ref{thm:intro Song}) do not assume any curvature condition. In the literature, L. Bessi\'eres, G. Besson, G. Courtois, and S. Gallot \cite{Bessieres:2012aa} studied the minimal entropy stability for hyperbolic manifolds under the Ricci curvature lower bound condition in the Gromov-Hausdorff topology without removing subsets. On the other hand, the work of D. Kazaras, A. Song, and K. Xu \cite{kazaras2024scalarcurvaturevolumeentropy} showed that the scalar curvature lower bound does not impose an upper bound on the volume entropy in general, which disproves the conjecture of I. Agol, P. A. Storm, and W. P. Thurston \cite[Conjecture 12.2]{Agol:2007aa}.

Following the previous remarks, it leads to a question whether there is a general relation between curvature lower bound conditions and the size of the removed subsets in the minimal entropy stability. As the Ricci lower bound ensures the stability without removing any subsets and the scalar curvature lower bound does not control the volume entropy effectively, it is reasonable to investigate a potential relation between the coarse dimension of the removed boundaries and the $m$-intermediate curvature lower bound introduced by S. Brendle, S. Hirsch, and F. Johne \cite{Brendle:2024aa}. We leave this question for future research.

Another main result of this paper is the following intrinsic uniqueness of the spherical Plateau solution for products of negatively curved symmetric spaces, which is not only instrumental to the proof of Theorem \ref{thm:intro_stability of the product} but also interesting in its own right.

\begin{alphathm}\label{thm:intro_uniqueness of the spherical plateau solution}
	Let $(M,g_0)$ be a closed oriented $n$-manifold defined as in Theorem \ref{thm:intro CF}. Then any spherical Plateau solution for $M$ is intrinsically isomorphic to $\left(M,\frac{h(g_{\mathrm{min}})^2}{4n}g_{\mathrm{min}}\right)$ where $g_{\textrm{min}}$ is defined as in Theorem \ref{thm:intro CF}.
\end{alphathm}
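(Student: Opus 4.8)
The plan is to follow the now-standard strategy for computing spherical Plateau solutions of higher-rank locally symmetric type spaces, adapting Song's framework and the techniques from the negatively curved case, while using the rigidity package of Besson--Courtois--Gallot (BCG) and Connell--Farb. First I would recall the definition: a spherical Plateau solution for $M$ arises as a limit, in the sense of integral currents in the Hilbert space $\mathbb{S}^\infty$ (the unit sphere of $L^2(M,g_0,\mathrm{vol}_{g_0})$ or of a suitable group von Neumann / regular representation space), of the rescaled pushforwards of the fundamental class $[M]$ under the natural maps $M\to\mathbb{S}^\infty$ associated to metrics $g_i$ with $h(g_i)\to h(g_m)$, $\mathrm{Vol}(M,g_i)$ normalized; equivalently, it is a mass-minimizer in the homology class of the canonical map. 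I would set up the spherical volume $\mathrm{SphereVol}(M)$ and recall the identity $\mathrm{SphereVol}(M)=\left(\frac{h(g_{\mathrm{min}})^2}{4n}\right)^{n/2}\mathrm{Vol}(M,g_{\mathrm{min}})$ — this is precisely the content of BCG/Connell--Farb translated into spherical-volume language — so that any Plateau solution $C$ has mass $\mathbf{M}(C)=\mathrm{SphereVol}(M)$.

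Next I would identify the structure of the limiting current. The key is that the natural map $\Phi_i:\tilde M\to \mathbb{S}^\infty$ built from the Patterson--Sullivan / barycenter construction, in the limit, factors through the de Rham decomposition: for the product model, the Furstenberg boundary is a product $\partial X_1\times\cdots\times\partial X_k$ and the spherical map splits (up to the weights $\alpha_i$) as a "join-like" product of the spherical maps of the factors. I would argue, as in Connell--Farb and in Song's uniqueness arguments, that the calibration/regularity theory forces the support of any Plateau solution to be the image of a single smooth map which is a Riemannian submersion-with-totally-geodesic-fibers onto the symmetric space factors, and then the equality case of the entropy inequality (applied factor-by-factor, using that each $X_i$ is negatively curved symmetric and of dimension $\ge 3$ so BCG rigidity applies) pins down the induced metric to be $\frac{h(g_{\mathrm{min}})^2}{4n}g_{\mathrm{min}}$ exactly. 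Concretely: the pullback metric on the solution is a product $\beta_1^2 g_1\times\cdots\times\beta_k^2 g_k$ for some constants $\beta_i$; mass-minimality gives one equation ($\mathbf{M}=\mathrm{SphereVol}$), the rigidity in each factor gives the ratios; together these determine all $\beta_i$, yielding the claimed metric.

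The main obstacle I expect is the "intrinsic uniqueness" part proper — ruling out exotic minimizers that are not of the clean submersion form, e.g. currents with multiplicity, with lower-dimensional pieces, or whose support is a non-manifold. This is exactly where one needs the intrinsic flat / measured-Gromov--Hausdorff rigidity rather than just a volume count. I would handle it by: (i) using the monotonicity formula and the fact that $\mathbb{S}^\infty$ is nonpositively curved in the relevant sense (CAT(1) locally, Euclidean cone structure) to get density and rectifiability control; (ii) invoking the sharp spherical-volume inequality as a calibration statement, so that equality forces the current to be calibrated, hence its tangent structure to match the model at a.e. point; (iii) upgrading pointwise tangent-cone information to a global intrinsic isometry using the product structure and the Connell--Farb barycenter map, which is a diffeomorphism in the equality case. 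The delicate technical point is that the product model has factors of different "curvatures" after the $\alpha_i$-rescaling, so the calibration is not the restriction of a single ambient form but a product of factorwise calibrations; one must check the cross terms vanish, which uses that the de Rham-type splitting of the boundary map is orthogonal.

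Finally I would assemble: given any spherical Plateau solution $C$ for $M$, Steps above show $C$ is (up to isomorphism of integral currents) the pushforward of $[M]$ under an embedding whose image carries the metric $\beta_1^2 g_1\times\cdots\times\beta_k^2 g_k$ with the $\beta_i$ forced by rigidity to satisfy $\beta_i^2 = \frac{h(g_{\mathrm{min}})^2}{4n}\alpha_i^2$, i.e. the intrinsic metric is $\frac{h(g_{\mathrm{min}})^2}{4n}g_{\mathrm{min}}$. This is precisely the asserted intrinsic isomorphism, and since it holds for an arbitrary solution, intrinsic uniqueness follows. I would remark that the argument is robust: the only inputs are (a) BCG rigidity for each negatively curved symmetric factor of dimension $\ge 3$, (b) Connell--Farb's computation of the minimal entropy/minimal volume for the product, and (c) Song's general structure theory for spherical Plateau solutions (existence, regularity, calibration), so the proof is essentially a synthesis of these three with the product-splitting bookkeeping being the new content.
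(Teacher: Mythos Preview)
Your proposal has a genuine gap: it misidentifies the mechanism by which uniqueness is obtained, and the tools you invoke (calibration forms, monotonicity, tangent-cone analysis, a priori smoothness/submersion structure of the support, factor-by-factor determination of scalings $\beta_i$) are not what actually does the work and are not obviously available in this setting. In particular, you never produce the one estimate the argument truly rests on: a \emph{quantitative almost-rigidity} for the barycenter map, saying that whenever the Jacobian of $\mathrm{Bar}:\mathbb{S}^+\to (\tilde M,g_m)$ along a tangent $n$-plane is within $\eta$ of its sharp upper bound $(4n/h(g_{\mathrm{min}})^2)^{n/2}$, the differential is within $c_\eta$ of a linear isometry and short piecewise-geodesic arcs are stretched by at most $(4n/h(g_{\mathrm{min}})^2)^{1/2}+c_\eta$. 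This is the product-case analogue of \cite[Lemma~7.5]{Besson:1995aa} and requires controlling each factor endomorphism $H_i$ (via $\mu^i_{n_i}$ close to $1/n_i$) and then reassembling; it is not a consequence of any calibration identity.

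The paper's route is: approximate the minimizing sequence by polyhedral chains in $\mathbb{S}^+/\lambda_\Gamma(\Gamma)$, apply the barycenter map $\mathrm{Bar}$ (which goes \emph{from} the sphere \emph{to} $M$, not the direction you describe), and use the sharp Jacobian bound together with the area formula to find large open regions $\Omega_i$ on which $|\mathrm{Jac}\,\mathrm{Bar}|$ is nearly optimal. On $\Omega_i$ the almost-rigidity lemma then gives $\mathrm{Bar}^*g_m\approx \mathbf{g}_{\mathrm{Hil}}$ in $L^\infty$. After a slicing/coarea step to control $\partial D_i$, one is in position to apply \cite[Proposition~1.4]{Song:2023aa}, which directly produces a limit map $\mathrm{Bar}_\infty:\mathrm{spt}(S_\infty)\to M$ that is an isometry for the intrinsic metrics and pushes $S_\infty$ to $[\![1_M]\!]$. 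No regularity of the limit support, no tangent-cone classification, and no separate determination of factor scalings is needed: the intrinsic isomorphism comes out in one stroke from the limit of the barycenter maps. Your outline, by contrast, tries to first pin down the geometry of the limit current and then read off the metric, which is both harder and unnecessary; and the remark that $\mathbb{S}^\infty$ is ``nonpositively curved'' is incorrect (it has constant curvature $+1$), so the monotonicity/CAT arguments you allude to do not get off the ground.
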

The notion of spherical Plateau solutions was recently introduced in \cite{song2024sphericalvolumesphericalplateau} and is reviewed in Section \ref{sec:uniqueness of the spherical plateau solution}. Roughly speaking, a spherical Plateau solution for a closed oriented $n$-manifold is an $n$-dimensional minimal submanifold intended to realize the spherical volume of the manifold. The spherical volume is a significant invariant introduced in \cite{Besson:1991aa} and played a crucial role in the proof of the minimal entropy rigidity for negatively curved symmetric spaces \cite{Besson:1995aa,Besson:1996aa}. We also refer the readers to \cite{song2024hyperbolicgroupssphericalminimal,song2024randomminimalsurfacesspheres} for more results on the spherical Plateau solutions.

Using the strong connection between the spherical volume and the minimal entropy as seen in \cite{Besson:1995aa,Besson:1996aa}, Theorem \ref{thm:intro_uniqueness of the spherical plateau solution} and the estimates in its proof are crucially used to prove Theorem \ref{thm:intro_stability of the product}. To prove Theorem \ref{thm:intro_uniqueness of the spherical plateau solution}, the barycenter map method for the product case developed by Connell and Farb \cite{Connell:2003aa} is adapted to our setting. This powerful method was used in all aforementioned fundamental results on the volume entropy. We also refer the readers to \cite{Connell:2003um,Connell:2003vm} for applications of the barycenter method. 

The paper is organized as follows. Basic notations, definitions, and fundamental theorems regarding the integral current spaces and intrinsic flat topology used in this paper are summarized in Section \ref{sec:preliminaries}. In Section \ref{sec:uniqueness of the spherical plateau solution}, we recall the notion of spherical Plateau problem and establish the intrinsic uniqueness of the spherical Plateau solution for products of negatively curved symmetric spaces. The barycenter map and the estimates appeared in this section are nontrivial adaptation of the arguments in \cite{Connell:2003aa} to the product case. In Section \ref{sec:stability of the product}, we prove Theorem \ref{thm:intro_stability of the product} by combining the results from Section \ref{sec:uniqueness of the spherical plateau solution} and the stability argument developed in \cite{song2025entropy}. In Section \ref{sec:counterexample}, we construct a counterexample to the analogous stability for the negatively curved symmetric case.\\

\textbf{Acknowledgments.} I would like to thank Antoine Song for suggesting the problem and for his invaluable guidance and support throughout this research. I am also grateful to Yangyang Li for helpful discussions and comments and Ben Lowe for pointing out the reference \cite{Ruan:2024aa}. I would also like to thank the anonymous referee for their careful reading of the manuscript and for valuable comments that improved the exposition.

\section{Preliminaries}\label{sec:preliminaries}
As the results in this paper are proved using the theory of integral current spaces and the intrinsic flat topology, we briefly recall the basic terminologies and theorems in this section. These concepts are developed in \cite{Ambrosio:2000vw,Wenger:2011aa,Sormani:2011aa}, and we also refer the readers to \cite[Section 1]{song2024sphericalvolumesphericalplateau} for a more detailed summary.

Let $(E,d)$ be a complete metric space. Let $n\ge 0$, and $\mathcal{D}^n(E)$ be the set of $(n+1)$-tuples $(f,\pi_1,\ldots,\pi_n)$ of real valued Lipschitz functions on $E$ where $f$ is assumed to be bounded. Following the theory of currents in Euclidean spaces, a $(n+1)$-tuple $(f,\pi_1,\ldots,\pi_n)\in\mathcal{D}^n(E)$ is denoted by $f\, d\pi_1\wedge\cdots\wedge d\pi_n$. Normally, an $n$-dimensional current $T$ on $E$ is defined as a real-valued multilinear functional on $\mathcal{D}^n(E)$ satisfying locality, continuity, and having finite mass, which we do not write down explicitly. See \cite{Ambrosio:2000vw} or \cite[Definition 2.13]{Sormani:2011aa}. Instead, we simply introduce integer rectifiable and integral currents in metric spaces as parametrized by Lipschitz maps from subsets of Euclidean spaces:

\begin{definition}
	$\bullet$ Let $A\subset\mathbb{R}^n$ and $\theta\in L^1(A;\mathbb{N})$. An elementary current $[\![\theta]\!]$ is defined as
	\begin{align*}
		[\![\theta]\!](f\,d\pi_1\wedge\cdots\wedge d\pi_n):=\int_A\theta\,f\,d\pi_1\wedge\cdots\wedge d\pi_n.
	\end{align*}

	$\bullet $ A multilinear functional $T$ on $\mathcal{D}^n(E)$ is called an $n$-dimensional integer rectifiable current in $E$ if it is of the form
	\begin{align*}
		T=\sum_{i=1}^\infty (\phi_i)_\sharp[\![\theta_i]\!]
	\end{align*}
	where $\phi_i:A_i\to E$ are Lipschitz maps with compact Borel measurable $A_i\subset\mathbb{R}^n$, $\phi_i(A_i)$ are disjoint, and $\theta_i\in L^1(A_i;\mathbb{N})$. The pair $(\{\phi_i:A_i\to E\},\{\theta_i\})$ is called a parametrization of $T$.

	$\bullet$ An integer rectifiable current $T$ is called an integral current if its boundary $\partial T$ is an $(n-1)$-dimensional integer rectifiable current.
\end{definition}

\begin{notation}
	Let $T$ be an $n$-dimensional integral current in $E$. As a generalization of oriented submanifolds, $T$ admits a well-defined notion of volume measure called the mass measure and a notion of total volume called the total mass.
	\begin{itemize}
		\item $||T||$: the mass measure of $T$
		\item $\mathbf{M}(T)$: the total mass of $T$
		\item $\text{spt}(T)$: the support of the measure $||T||$
		\item $\phi_\sharp(T)$: the push-forward of an integral current $T$ by a Lipschitz map $\phi$
		\item $T\llcorner A$: the restriction of an integral current $T$ to a Borel set $A$
		\item $[\![1_M]\!]$: the natural $n$-dimensional integral current induced by integration on a complete oriented Riemannian manifold $(M,g)$ with compact boundary and finite volume
	\end{itemize}
\end{notation}

\begin{definition}[Flat topology]
	Let $T$ and $T_i$ be $n$-dimensional integral currents in $E$. We say that $T_i$ converges to $T$ in the flat topology if there are sequences $\{U_i\}$ and $\{V_i\}$ of $n$- and $(n+1)$-dimensional integral currents in $E$ such that
	\begin{align*}
		T_i-T=U_i+\partial V_i\qquad\text{and}\qquad \lim_{i\to\infty}\mathbf{M}(U_i)=\lim_{i\to\infty}\mathbf{M}(V_i)=0.
	\end{align*}
\end{definition}
\textbf{Lower semi-continuity of mass.} \cite[Section 3]{Ambrosio:2000vw} If $T_i$ converges to $T$ in the flat topology (or weak topology), then 
\begin{align*}
	\mathbf{M}(T)\le\liminf_{i\to\infty}\mathbf{M}(T_i).
\end{align*}

\textbf{The area and coarea formulas.} \cite[Theorem 8.2]{Ambrosio:2000aa}
Let $S$ be a countably $\mathscr{H}^n$-rectifiable set in $E$.
\begin{enumerate}
	\item \textbf{Area formula.} Given a Lipschitz map $g:S\to E'$, the following area formula holds for any Borel function $\theta:S\to[0,\infty]$:
	\begin{align*}
		\int_S\theta(p)\mathbf{J}_n(d^S g_p)\,d\mathscr{H}^n(p)=\int_{E'}\sum_{p\in S\cap g^{-1}(z)}\theta(p)\,d\mathscr{H}^n(z),
	\end{align*}
	where $\mathbf{J}_n(d^S g_p)$ is the $n$-dimensional Jacobian of $g$ at $p$. As the classical area formula, this formula can be used to compute the mass of an integral current by its image under a Lipschitz map.
	\item \textbf{Coarea formula.} Given a Lipschitz map $\pi:S\to\mathbb{R}^k$, $k\le n$, the following coarea formula holds for any Borel function $\theta:S\to[0,\infty]$:
	\begin{align*}
		\int_S\theta(p)\mathbf{C}_k(d^S\pi_p)\,d\mathscr{H}^n(p)=\int_{\mathbb{R}^k}\int_{\pi^{-1}(z)}\theta(p)\,d\mathscr{H}^{n-k}(p)\,d\mathscr{H}^k(z),
	\end{align*}
	where $\mathbf{C}_k(d^S\pi_p)$ is the $k$-dimensional coarea of $\pi$ at $p$. Similar to the classical coarea formula, we can use this formula to compute the mass of an integral current by slicing it with a Lipschitz map.
\end{enumerate}

\textbf{The slicing theorem.} \cite[Theorems 5.6 and 5.7]{Ambrosio:2000vw}
Let $T$ be an $n$-dimensional integral current in $E$ and let $\pi:\mathrm{spt}(T)\to\mathbb{R}^k$, $k\le n$, be a Lipschitz map. Then for almost all $x\in\mathbb{R}^k$, there is an $(n-k)$-dimensional integral current $\langle T,\pi,x\rangle$ in $E$ satisfying that for all $f\,d\pi_1\wedge\cdots\wedge d\pi_{n-k}\in\mathcal{D}^{n-k}(E)$ and $\psi\in C_c(\mathbb{R}^k)$, we have
\begin{align*}
	\int_{\mathbb{R}^k}\left[\langle T,\pi,x\rangle(f\,d\pi_1\wedge\cdots\wedge d\pi_{n-k})\right]\psi(x)\,dx=T(f(\psi\circ\pi)d\pi\wedge d\pi_1\wedge\cdots\wedge d\pi_{n-k})
\end{align*}
where for $\pi=(u_1,\ldots,u_k)$, $d\pi:=du_1\wedge\cdots\wedge du_k$. It allows us to slice an integral current by level sets of a Lipschitz map which are also integral currents.\\

\textbf{Integral current spaces and intrinsic flat topology.} \cite{Sormani:2011aa}
An integral current space is an intrinsic generalization of integral currents. It is a triple $(X,d,S)$ where $(X,d)$ is a metric space and $S$ is an integral current in the completion $\overline{X}$ of $(X,d)$ with $\mathrm{spt}(S)=\overline{X}$. The intrinsic flat topology is defined similarly to the definition of Gromov-Hausdorff topology: two integral current spaces are close if they are isometrically embedded in a common complete metric space in a way that their embedded images are close in the usual flat topology.\\

\textbf{Wenger's compactness theorem.} \cite{Wenger:2011aa,Sormani:2011aa} Let $(X_i,d_i,S_i)$ be a sequence of integral current spaces. If the masses of $S_i$ are uniformly bounded and the diameters of $X_i$ are uniformly bounded, then there is a subsequence converging to an integral current space in the intrinsic flat topology.

\section{Uniqueness of the Spherical Plateau solution for products of negatively curved symmetric spaces}\label{sec:uniqueness of the spherical plateau solution}

In this section, we first review the definition of the spherical volume and spherical Plateau solution for a closed oriented $n$-manifold $M$. We then compute the spherical volume for products of negatively curved symmetric spaces and show the intrinsic uniqueness of the spherical Plateau solution for $M$, which is a key ingredient in proving the stability in Section \ref{sec:stability of the product}. The Jacobian and distance estimates with respect to the barycenter map are the main task in the proof of the uniqueness.

Let $(M,g_0)$ be a closed oriented $n$-manifold that is locally isometric to a product of negatively curved symmetric spaces of dimension at least $3$. In this section, we study the spherical Plateau solutions for $M$ introduced in \cite{song2025entropy}.

Let $\Gamma:=\pi_1(M)$ and denote by $S^\infty$ the unit sphere in the Hilbert space $\ell^2(\Gamma)$. The $\ell^2$-norm induces a Riemannian metric $\mathbf{g}_{\text{Hil}}$ on $S^\infty$. The group $\Gamma$ acts isometrically on $S^\infty$ by the left regular representation $\lambda_\Gamma:\Gamma\to\text{End}(\ell^2(\Gamma))$ defined as
\begin{align*}
	(\lambda_\Gamma(\gamma).f)(x):=f(\gamma^{-1}x) \quad \text{for all }\gamma\in\Gamma, x\in\Gamma, f\in S^\infty.
\end{align*}

Since $\Gamma$ is infinite and torsion-free, the action is free and proper. Thus, the quotient space $S^\infty/\lambda_\Gamma(\Gamma)$ becomes a classifying space for $\Gamma$. 

Also, $S^\infty/\lambda_\Gamma(\Gamma)$ is a Hilbert manifold equipped with the Hilbert-Riemannian metric $\mathbf{g}_{\text{Hil}}$ induced by the $\ell^2$-norm.

Given base points $p_0\in M$ and $q_0\in S^\infty/\lambda_\Gamma(\Gamma)$, there exists a smooth immersion $M\to S^\infty/\lambda_\Gamma(\Gamma)$ inducing the identity map from $\pi_1(M,p_0)$ to $\pi_1(S^\infty/\lambda_\Gamma(\Gamma),q_0)$, which is unique up to homotopies sending $p_0$ to $q_0$. It determines a unique homotopy class of maps; we say such maps to be ``admissible''. Define
\[
	\mathscr{H}_M:=\{\phi:M\to S^\infty/\lambda_{\Gamma}(\Gamma)\,|\, \phi\text{ is an admissible smooth immersion}\}.
\]
Any map $\phi\in\mathscr{H}_M$ defines the pull-back Riemannian metric $\phi^*\mathbf{g}_{\text{Hil}}$ on $M$.


\begin{definition}
	(Spherical volume and Plateau solution).\\
	\indent$\bullet$ \emph{The spherical volume of $M$} is defined as
	\[
		\text{SphereVol}(M):=\inf\{\mathrm{Vol}(M,\phi^{*}(\mathbf{g}_{\mathrm{Hil}}))\,|\, \phi\in\mathscr{H}_M\}.
	\]

	$\bullet$ We call a sequence $\phi_i\in\mathscr{H}_M$ \emph{minimizing} if 
	\begin{align*}
		\lim_{i\to\infty}\mathrm{Vol}(M,\phi_i^{*}(\mathbf{g}_{\mathrm{Hil}}))=\text{SphereVol}(M).
	\end{align*} Denoting by $[\![1_M]\!]$ the integral current in $(M,g_0)$ induced by $M$ and its orientation, we can consider the sequence of integral currents $(\phi_{i})_{\sharp}[\![1_M]\!]$ in $S^\infty/\lambda_\Gamma(\Gamma)$.\vspace{5pt}\\
	\indent$\bullet$ \emph{A spherical Plateau solution for $M$} is defined as the integral current space $C_\infty=(X_\infty,d_\infty,T_\infty)$ which is the limit of a minimizing sequence $(\phi_i)_{\sharp}[\![1_M]\!]$ in the intrinsic flat topology.
	
\end{definition}
Since any minimizing sequence in $\mathscr{H}_M$ has uniformly bounded mass and diameter, Wenger's compactness theorem \cite{Wenger:2011aa} guarantees that there is a subsequential limit $C_\infty$ in the intrinsic flat topology. In what follows, we compute the spherical volume for the product case and show the intrinsic uniqueness of the spherical Plateau solution for $M$.

\subsection{The spherical volume}\label{subsec:spherical volume}
The spherical volume of the manifold $M$ is readily available as part of \cite{Connell:2003aa} using the barycenter map method. We include the proof written in our setting for completeness.

\begin{theorem}\label{thm:spherical volume}
	Let $(M,g_0)$ be a closed oriented $n$-manifold which is locally isometric to a product $(X_1,g_1)\times\cdots (X_k,g_k)$ of negatively curved symmetric spaces (where $(X_i,g_i)$ has maximum sectional curvature $-1$ after scaling and dimension at least $3$). Then we have
	\[
		\mathrm{SphereVol}(M)=\mathrm{Vol}\left(M,\frac{h(g_{\mathrm{min}})^{2}}{4n}g_{\mathrm{min}}\right)
	\]
	where
	\begin{align*}
		g_{\mathrm{min}}&=\alpha_1^2 g_1\times\cdots\times\alpha_k^2 g_k,\, \alpha_i=\frac{h_i}{\sqrt{n_i}}\prod_{i=1}^k \left(\frac{\sqrt{n_i}}{h_i}\right)^{n_i/n},\, h(g_{\mathrm{min}})=\sqrt{n}\prod_{i=1}^{k}\left(\frac{h_i}{\sqrt{n_i}}\right)^{n_i/n}.\\
		&\quad(n_i=\dim X_i,\quad n=\sum_{i=1}^k n_i=\dim M,\quad h_i=\text{the volume entropy of }g_i)
	\end{align*}
\end{theorem}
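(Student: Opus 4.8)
The plan is to follow the Besson--Courtois--Gallot natural map construction, adapted to the product setting as in Connell--Farb \cite{Connell:2003aa}, and establish the two matching inequalities $\mathrm{SphereVol}(M)\le \mathrm{Vol}\!\left(M,\tfrac{h(g_{\mathrm{min}})^2}{4n}g_{\mathrm{min}}\right)$ and the reverse. For the upper bound, first I would recall that on each negatively curved symmetric factor $(X_i,g_i)$ the Patterson--Sullivan measures $\{\mu^i_x\}_{x\in X_i}$ on the boundary $\partial_\infty X_i$ give, for each parameter $s>h_i$, an embedding of the universal cover into the sphere of $L^2(\partial_\infty X_i)$ via $x\mapsto (d\mu^i_x/d\mu^i_{x_0})^{1/2}$; taking a product of these (with a careful choice of the normalization exponents $\alpha_i$) and composing with the Gamma-equivariant identification of $L^2(\partial_\infty \widetilde M)$-type data with $\ell^2(\Gamma)$ after a finite-dimensional approximation and averaging, one obtains admissible maps $\phi_s\in\mathscr H_M$. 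The pointwise computation of $\phi_s^*\mathbf g_{\mathrm{Hil}}$ reduces, as in \cite{Besson:1991aa,Besson:1995aa}, to estimating $\int \langle\cdot,\cdot\rangle\, d\mu_x$ against the second-moment form $\int (d(\text{Busemann}))^2 d\mu_x$; the algebraic identity that makes the product case work is precisely the AM--GM-type optimization over the $\alpha_i$ that yields the exponents $n_i/n$ and the constant $h(g_{\mathrm{min}})$, and letting $s\downarrow h_i$ gives $\mathrm{Vol}(M,\phi_s^*\mathbf g_{\mathrm{Hil}})\to \mathrm{Vol}(M,\tfrac{h(g_{\mathrm{min}})^2}{4n}g_{\mathrm{min}})$.

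For the lower bound I would use the barycenter (natural) map in the reverse direction: given any admissible immersion $\phi:M\to S^\infty/\lambda_\Gamma(\Gamma)$, lift to $\tilde\phi:\widetilde M\to S^\infty\subset \ell^2(\Gamma)$, and to each point $\tilde\phi(x)$ associate the probability measure $|\tilde\phi(x)|^2$ on $\Gamma$, push it to a measure on the boundary of the model space $(\widetilde M, g_{\mathrm{min}})$ via the orbit map and the (product of) Patterson--Sullivan boundary maps, and take its barycenter $\mathrm{bar}(x)\in \widetilde M$. This produces a $\Gamma$-equivariant map $\mathrm{bar}:\widetilde M\to(\widetilde M,g_{\mathrm{min}})$ of the correct degree, and the key point is the pointwise Jacobian estimate $|\mathrm{Jac}\,\mathrm{bar}(x)|\le \left(\tfrac{4n}{h(g_{\mathrm{min}})^2}\right)^{n/2}\cdot(\text{Jacobian of }\phi\text{ at }x)$ — equivalently $\mathrm{bar}^*g_{\mathrm{min}}\le \tfrac{4n}{h(g_{\mathrm{min}})^2}\phi^*\mathbf g_{\mathrm{Hil}}$ in the sense of quadratic forms up to the relevant determinant bound. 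Integrating and using $\deg=1$ gives $\mathrm{Vol}(M,g_{\mathrm{min}})\le \left(\tfrac{4n}{h(g_{\mathrm{min}})^2}\right)^{n/2}\mathrm{Vol}(M,\phi^*\mathbf g_{\mathrm{Hil}})$, i.e. exactly $\mathrm{Vol}(M,\tfrac{h(g_{\mathrm{min}})^2}{4n}g_{\mathrm{min}})\le \mathrm{Vol}(M,\phi^*\mathbf g_{\mathrm{Hil}})$; taking the infimum over $\phi\in\mathscr H_M$ finishes the bound.

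The main obstacle, and the part that genuinely requires the product-specific analysis of \cite{Connell:2003aa} rather than a black-box citation, is the Jacobian estimate for $\mathrm{bar}$ in the product case. On a single rank-one symmetric factor the estimate comes from a sharp eigenvalue inequality for the symmetric bilinear forms $H(v)=\int DdB_\theta(v,v)\,d\nu(\theta)$ and $K(v)=\int (dB_\theta(v))^2 d\nu(\theta)$ built from Busemann functions, exploiting the explicit structure of $DdB_\theta$ on $\mathbb H^m_{\mathbb R},\mathbb H^m_{\mathbb C},\dots$; for a product one must analyze the block structure of these forms, where the boundary measure $\nu$ need not split as a product, and run a constrained optimization (Lagrange multipliers / AM--GM across the blocks) to extract the constant $\left(\tfrac{4n}{h(g_{\mathrm{min}})^2}\right)^{n/2}$ with the weights $n_i/n$. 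I would organize this as: (i) a lemma recording the per-factor Busemann Hessian bounds; (ii) the linear-algebra lemma optimizing $\det$ of the barycenter differential subject to the trace constraints coming from the $\phi^*\mathbf g_{\mathrm{Hil}}$-normalization on each block; (iii) assembling (i)--(ii) into the global Jacobian bound; and (iv) the degree/integration argument. The technical care needed in (ii) — ensuring the optimum is attained by the product-of-models configuration and that equality forces the split form — is what will be reused in the proof of the intrinsic uniqueness statement (Theorem \ref{thm:intro_uniqueness of the spherical plateau solution}).
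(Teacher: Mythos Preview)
Your proposal is essentially correct and follows the same route as the paper: the lower bound via the barycenter map with the Connell--Farb block-by-block Jacobian estimate (your items (i)--(iv) match the paper's Lemma~\ref{lemma:jacobian bound} almost exactly, including the reduction to the per-factor BCG inequality $(\det H_i)^{1/2}/\det K_i\le(\sqrt{n_i}/h_i)^{n_i}$), and an upper bound paired with it.

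Two points of divergence are worth flagging. First, for the upper bound the paper does not build explicit Patterson--Sullivan-type immersions into $S^\infty/\lambda_\Gamma(\Gamma)$ as you propose; it simply invokes the general inequality $\mathrm{SphereVol}(M)\le\mathrm{Vol}(M,\tfrac{h(g)^2}{4n}g)$ from \cite[Corollary~3.13]{Besson:1991aa} applied to $g=g_{\mathrm{min}}$, which already absorbs the AM--GM optimization you describe. Your route works but is more labor than needed here. Second, for the lower bound the paper does not apply $\mathrm{Bar}$ directly to an arbitrary $\phi\in\mathscr H_M$: the barycenter map is only defined (and differentiable along tangent $n$-planes) on the finitely-supported sphere $\mathbb S^+$, so the paper first approximates $\phi_\sharp[\![1_M]\!]$ by a polyhedral chain supported in $\mathbb S^+/\lambda_\Gamma(\Gamma)$ and runs the Jacobian/area-formula argument there. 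You allude to a ``finite-dimensional approximation'' but do not make this step explicit; without it, differentiability of $\mathrm{Bar}$ along $\phi(M)$ is not guaranteed.
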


The above theorem can be obtained by combining the methods from Connell and Farb \cite{Connell:2003aa} and Song \cite{song2025entropy}. Firstly, we define the barycenter map as the following: let $(\tilde{M},\tilde{g}_{\mathrm{min}})$ be the universal cover of $M$ equipped with the pullback metric of $g_{\mathrm{min}}$ and fix a base point $o\in\tilde{M}$. 
Let $\{\nu_x\}_{x\in\tilde{M}}$ be the family of the Patterson-Sullivan measures on $\partial\tilde{M}$. Note that for any $x\in\tilde{M}$, the support of $\nu_x$ is equal to the Furstenberg boundary $\partial_F \tilde{M}$, which in our case is strictly contained in the visual boundary $\partial\tilde{M}$. We refer to \cite{Albuquerque:1999uu} for the definition and properties of those measures defined on the higher rank symmetric spaces. 

For $x\in \tilde{M}$ and $\theta\in\partial\tilde{M}$, let $B_0(x,\theta)$ be the Busemann function on $\tilde{M}$ with respect to the base point $o\in\tilde{M}$ defined by
\[
	B_0(x,\theta)=\lim_{t\to\infty}d_{\tilde{g}_{\mathrm{min}}}(x,\gamma_{\theta}(t))-t
\] where $d_{\tilde{g}_{\mathrm{min}}}$ is the distance on $\tilde{M}$ with respect to $\tilde{g}_{\mathrm{min}}$ and $\gamma_{\theta}$ is the unique geodesic ray with $\gamma_{\theta}(0)=o$ and $\gamma_{\theta}(\infty)=\theta$. For $f\in S^\infty$, consider the function $\mathcal{B}_f:\tilde{M}\to [0,\infty]$ defined by
\[
	\mathcal{B}_f(x):=\sum_{\gamma\in\Gamma}|f(\gamma)|^2 \int_{\partial_F\tilde{M}}B_0(x,\theta)\, d\nu_{\gamma.o}(\theta)
\]
where $\gamma.o$ is the monodromy action of $\gamma$ on $o\in\tilde{M}$. 

Let $\mathbb{S}^{+}$ be the set of finitely supported functions in $S^\infty$. We define the barycenter map $\mathrm{Bar}:\mathbb{S}^{+}\to\tilde{M}$ by
\begin{equation}\label{eqn:barycenter map}
	\mathrm{Bar}(f):=\text{ the unique point minimizing }\mathcal{B}_f.
\end{equation} The map $\mathrm{Bar}$ is well-defined on $\mathbb{S}^+$ and independent of the choice of base points as shown in \cite[Section 4]{Connell:2003aa} by verifying that $\mathcal{B}_f$ is strictly convex and $\mathcal{B}_f(x)\to\infty$ as $x\to\infty$. It is also straightforward that $\mathrm{Bar}$ is $\Gamma$-equivariant, thus, the quotient map of $\mathrm{Bar}$ is also well-defined on $\mathbb{S}^{+}/\lambda_\Gamma(\Gamma)$ into $M$, which we still denote by $\mathrm{Bar}$.

We now state the Jacobian estimate of the barycenter map in our setting.


\begin{lemma}\label{lemma:jacobian bound} (\cite[Theorem 5.1]{Connell:2003aa}). 
	Let $f\in\mathbb{S}^{+}$ and $Q$ be the tangent $n$-plane at $f$ of a totally geodesic $n$-simplex in $\mathbb{S}^{+}$ passing through $f$. Then, $\mathrm{Bar}$ is differentiable along $Q$ and we have
	\begin{equation}\label{eqn:Jac_est}
		\left|\mathrm{Jac}\,\mathrm{Bar}|_{Q}\right|\le\left(\frac{4n}{h(g_{\mathrm{min}})^2}\right)^{n/2}.
	\end{equation}
\end{lemma}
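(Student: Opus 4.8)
The plan is to adapt the Besson--Courtois--Gallot Jacobian estimate, in the form developed by Connell and Farb for products of negatively curved symmetric spaces, to the present Hilbert-space setup. First I would set up the variational characterization: since $\mathrm{Bar}(f)$ is the unique minimizer of the strictly convex function $\mathcal{B}_f$, it satisfies the Euler--Lagrange equation $d\mathcal{B}_f|_{\mathrm{Bar}(f)} = 0$, i.e.
\[
	\sum_{\gamma\in\Gamma}|f(\gamma)|^2\int_{\partial_F\tilde M} dB_0|_{(x,\theta)}(v)\, d\nu_{\gamma.o}(\theta) = 0
	\qquad\text{for all } v\in T_x\tilde M,\ x=\mathrm{Bar}(f).
\]
Differentiating this implicit equation along a curve in the totally geodesic $n$-simplex through $f$ gives a formula for $D\mathrm{Bar}|_Q$ in terms of two symmetric bilinear forms on $T_x\tilde M$: one coming from the Hessian of the Busemann functions, call it $H$ (positive definite by strict convexity, since on a negatively curved symmetric space $\mathrm{Hess}\, B_0$ is positive semidefinite with the right rank behavior), and one coming from the first variation in the $S^\infty$ direction, call it $K$. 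The upshot, exactly as in \cite[Section 5]{Connell:2003aa}, is that for any unit vector $u\in Q$ one has $\langle H\, D\mathrm{Bar}(u), D\mathrm{Bar}(u)\rangle = \langle K(u), \cdot\rangle$-type identity, and hence $|\det D\mathrm{Bar}|_Q|$ is bounded by $\det(K)^{1/2}/\det(H)^{1/2}$ up to normalization, with $H$ computed against the measure $\mu_x := \sum_\gamma |f(\gamma)|^2 \nu_{\gamma.o}$ and $K$ likewise.

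Second, the heart of the matter is the pointwise linear-algebra inequality bounding this ratio of determinants. Here is where the product structure enters and where the argument genuinely differs from the rank-one case. On $\tilde M = X_1\times\cdots\times X_k$ the Furstenberg boundary is $\partial_F\tilde M = \partial X_1\times\cdots\times\partial X_k$, and the Busemann function splits as $B_0(x,\theta) = \sum_i B_0^{(i)}(x_i,\theta_i)$; correspondingly $\mathrm{Hess}\, B_0$ is block diagonal with the $i$-th block being (a scalar multiple of) the known rank-one Hessian on $X_i$. Following Connell--Farb, one introduces on each factor the endomorphism $h_i^2$ (the ``tilted'' form $g_i - dB_0^{(i)}\otimes dB_0^{(i)}$ integrated against the boundary measure, appropriately rescaled) and uses the algebraic inequality from \cite{Besson:1995aa} on each factor together with an arithmetic--geometric-mean argument across factors, to get
\[
	|\mathrm{Jac}\,\mathrm{Bar}|_Q| \le \prod_{i=1}^k \left(\frac{4 n_i}{h_i^2}\right)^{n_i/2} \cdot (\text{combinatorial factor}),
\]
and then verify that the right-hand side equals $\left(4n/h(g_{\mathrm{min}})^2\right)^{n/2}$ by the defining formula for $h(g_{\mathrm{min}}) = \sqrt{n}\prod_i (h_i/\sqrt{n_i})^{n_i/n}$ and the scaling factors $\alpha_i$. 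The main obstacle, and the step requiring the most care, is precisely this multi-factor optimization: one must show the worst case is realized when the tangent $n$-plane $Q$ distributes its ``energy'' across the $k$ factors in the proportions dictated by the $\alpha_i$, which is exactly the content of the Connell--Farb estimate; I would isolate this as the key lemma and cite \cite[Theorem 5.1, Section 5]{Connell:2003aa} for the algebraic inequality, adapting only the measure-theoretic bookkeeping (replacing their single Patterson--Sullivan measure on $\tilde M$ by the weighted sum $\mu_x = \sum_\gamma |f(\gamma)|^2 \nu_{\gamma.o}$, which is still a probability measure on $\partial_F\tilde M$ since $\|f\|_{\ell^2} = 1$).

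Third, I would address the differentiability claim: that $\mathrm{Bar}$ is differentiable along $Q$. Because $f\in\mathbb{S}^+$ is finitely supported, $\mathcal{B}_f$ is a finite sum of smooth convex functions on $\tilde M$ with positive definite Hessian at the minimum, so the implicit function theorem applies along any finite-dimensional smooth family of such $f$'s — in particular along a totally geodesic simplex in $\mathbb{S}^+$ — giving smoothness of $f\mapsto\mathrm{Bar}(f)$ there; this is the same observation as in \cite[Section 4--5]{Connell:2003aa}. Finally I would note that passing to the quotient $\mathbb{S}^+/\lambda_\Gamma(\Gamma)$ and to $M$ changes nothing, since $\mathrm{Bar}$ is $\Gamma$-equivariant and the estimate \eqref{eqn:Jac_est} is pointwise. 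In short, the proof is: (i) implicit differentiation of the Euler--Lagrange equation, (ii) reduce $|\mathrm{Jac}\,\mathrm{Bar}|_Q|$ to a ratio of determinants of Busemann-type bilinear forms against $\mu_x$, (iii) invoke the product algebraic inequality of Connell--Farb factor-by-factor plus AM--GM, (iv) match constants with $h(g_{\mathrm{min}})$. The genuinely delicate point is (iii), and everything else is bookkeeping carried over verbatim from \cite{Connell:2003aa,Besson:1995aa}.
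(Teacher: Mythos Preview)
Your proposal is correct and follows essentially the same approach as the paper's proof: implicit differentiation of the critical-point equation, Cauchy--Schwarz to bound $|\mathrm{Jac}\,\mathrm{Bar}|_Q| \le 2^n (\det H_f)^{1/2}/\det K_f$ (note: no square root on the denominator, and your $H,K$ labels are swapped relative to the paper's convention), then the block-diagonal splitting of the Busemann Hessian across factors, and finally the BCG determinant inequality on each factor. One clarification: no AM--GM or cross-factor optimization is actually needed, since $\mathrm{tr}_{g_i} H_i = 1$ holds independently on each factor (because $|\nabla B_i|_{g_i}=1$), so the rank-one bound $(\det H_i)^{1/2}/\det K_i \le (\sqrt{n_i}/h_i)^{n_i}$ applies directly to each factor and their product equals $(4n/h(g_{\min})^2)^{n/2}$ on the nose, with no extra combinatorial factor.
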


The inequality \eqref{eqn:Jac_est} follows from the argument in \cite[Section 5]{Connell:2003aa}, by replacing the probability measure $\mu_y^s$ on $\tilde{N}$ by $\sum_{\gamma\in\Gamma}|f(\gamma)|^2\delta_{\gamma.o}$ on $\tilde{M}$, where $\delta_{\gamma.o}$ stands for the Dirac measure at $\gamma.o$.  We include the proof below for reader's convenience.

\begin{proof}[Proof of Lemma \ref{lemma:jacobian bound}]
	Consider $f\in\mathbb{S}^{+}$. The barycenter $x=\mathrm{Bar}(f)$ is determined by the equation
	\begin{equation}\label{eqn:critical point of bar}
		\sum_{\gamma\in\Gamma}f^2(\gamma)\int_{\partial_F \tilde{M}}d_{(x,\theta)}B_0(\cdot)\, d\nu_{\gamma.o}=0\in T_{x}^{*}\tilde{M}.
	\end{equation} 
	Let $H_f$ and $K_f$ be the endomorphisms of $T_x\tilde{M}$ determined by the following: for any $v,w\in T_x\tilde{M}$
	\begin{align*}
		\tilde{g}_{\mathrm{min}}(H_f(v),w)&:=\sum_{\gamma\in\Gamma}f^2(\gamma)\int_{\partial_F \tilde{M}} \tilde{g}_{\mathrm{min}}(d_{(x,\theta)}B_0(v),d_{(x,\theta)}B_0(w))\, d\nu_{\gamma.o}(\theta),\\
		\tilde{g}_{\mathrm{min}}(K_f(v),w)&:=\sum_{\gamma\in\Gamma}f^2(\gamma)\int_{\partial_F \tilde{M}}\mathrm{Hess}_{(x,\theta)}B_0(v,w)\, d\nu_{\gamma.o}(\theta).
	\end{align*}
	We also denote the corresponding symmetric bilinear forms by $H_f$ and $K_f$, which are both positive semi-definite. Note also that $\mathrm{tr}_{\tilde{g}_{\mathrm{min}}}H_f=1$ since $|dB_0|_{\tilde{g}_{\mathrm{min}}}=1$.
	Let $Q$ be the tangent $n$-plane of a totally geodesic $n$-simplex in $\mathbb{S}^{+}$ passing through $f$. By differentiating \eqref{eqn:critical point of bar} with respect to $f$, we get for all tangent vectors $\dot{f}\in Q$,
	\begin{align*}
		\sum_{\gamma\in\Gamma}2f(\gamma)\dot{f}(\gamma)&\int_{\partial_F \tilde{M}}d_{(x,\theta)}B_0(\cdot)\, d\nu_{\gamma.o}(\theta)\\
							       &+\sum_{\gamma\in\Gamma} f^2(\gamma)\int_{\partial_F \tilde{M}}\mathrm{Hess}_{(x,\theta)}B_0(d\mathrm{Bar}|_{Q}(\dot{f}),\cdot)\, d\nu_{\gamma.o}(\theta)=0\in T_x^* \tilde{M}
	\end{align*}
	By Cauchy-Schwarz inequality, we have for $v\in T_{x}\tilde{M}$ and $\dot{f}\in Q$ with $||\dot{f}||_{\ell^2}=1$,
	\begin{equation}\label{eqn:K and F}
		\begin{split}
			K_f(d\mathrm{Bar}|_Q(\dot{f}),v)&\le 2||\dot{f}||_{\ell^2}\left[\sum_{\gamma\in\Gamma}f^2(\gamma)\left(\int_{\partial_F \tilde{M}} d_{(x,\theta)}B_0(v)\, d\nu_{\gamma.o}(\theta)\right)^2\right]^{1/2}\\
						       &\le 2[H_f(v,v)]^{1/2},
		\end{split}
	\end{equation} and this implies that
	\begin{equation}\label{eqn:jacobian computation1}
		\left|\mathrm{Jac Bar}|_Q\right|\le 2^n\frac{(\det H_f)^{1/2}}{\det K_f}.
	\end{equation}
	By the hypothesis on $\tilde{M}$, it is shown as in the proof of \cite[Theorem 5.1]{Connell:2003aa} that 
	\[
		\partial_F \tilde{M}\cong \partial X_{1}\times\cdots\times\partial X_{k}.
	\] Let $B_i$ denote the Busemann function for the rank one symmetric space $X_i$ with metric $g_i$. By using the product coordinates, we have the following formulas: given 
	$x=(x_1,\ldots,x_k)\in X_1\times\cdots\times X_k=\tilde{M}$ and $\theta=(\theta_1,\ldots,\theta_k)\in\partial X_1\times\cdots\times \partial X_k=\partial_F\tilde{M}$,
	\begin{align*}
		B_0(x,\theta)&=\sum_{i=1}^{k}\frac{\alpha_i}{\sqrt{k}}B_{i}(x_i,\theta_i),\\
		\nabla_{x} B_0&=\sum_{i=1}^{k}\frac{1}{\alpha_i\sqrt{k}}\nabla_{x_i}^{g_i}B_i, \text{ hence, }|\nabla B_0|=1,\\
		\mathrm{Hess}_{x} B_0&=\bigoplus_{i=1}^{k}\frac{1}{\alpha_i\sqrt{k}}\mathrm{Hess}_{x_i}^{g_i}B_i.
	\end{align*} 
	Applying those formulas and using linear algebra techniques, we observe
	\begin{align*}
		\det H_f&=\det\left(\sum_{\gamma\in\Gamma}f^2(\gamma)\int_{\partial_F \tilde{M}}\left|\sum_{i=1}^{k}(\alpha_i\sqrt{k})^{-1}\pi_i^*d_{(x_i,\theta_i)}B_i(\cdot)\right|_{g_{i}}^2\, d\nu_{\gamma.o}(\theta)\right)\\
			&\le\prod_{i=1}^k(\alpha_i\sqrt{k})^{-2n_i}\det\left(\sum_{\gamma\in\Gamma}f^2(\gamma)\int_{\partial X_i}\left|\pi_i^*d_{(x_i,\theta_i)}B_i(\cdot)\right|_{g_{i}}^2\, d((\pi_i)_*\nu_{\gamma.o})(\theta_i)\right),
	\end{align*}
	and
	\begin{align*}
		\det K_f&=\det \left(\sum_{\gamma\in\Gamma}f^2(\gamma)\int_{\partial_F \tilde{M}}\bigoplus_{i=1}^k (\alpha_i\sqrt{k})^{-1} \pi_i^*\mathrm{Hess}_{(x_i,\theta_i)}B_i\, d\nu_{\gamma.o}(\theta)\right)\\
			&=\prod_{i=1}^{k}(\alpha_i\sqrt{k})^{-n_i}\det\left(\sum_{\gamma\in\Gamma}f^2(\gamma)\int_{\partial X_i}\pi_i^{*}\mathrm{Hess}_{(x_i,\theta_i)}B_i\, d((\pi_i)_{*}\nu_{\gamma.o})(\theta_i)\right).
	\end{align*}
	where $\pi_i$ is the projection map of $\partial_F \tilde{M}$ onto $\partial X_i$. Combining them, we get
	\begin{align*}
		\frac{(\det H_f)^{1/2}}{\det K_f}&\le\prod_{i=1}^{k}\frac{\left[\det\left(\sum_{\gamma\in\Gamma}f^2(\gamma)\int_{\partial X_i}|\pi_i^*d_{(x_i,\theta_i)}B_i(\cdot)|^2_{g_i}\, d((\pi_i)_*\nu_{\gamma.o})\right)\right]^{1/2}}{\det \left(\sum_{\gamma\in\Gamma}f^2(\gamma)\int_{\partial X_i} \pi_i^* \mathrm{Hess}_{(x_i,\theta_i)}B_i\, d((\pi_i)_*\nu_{\gamma.o})\right)}\\
						 &=\prod_{i=1}^{k}\frac{(\det H_i)^{1/2}}{\det K_i}
	\end{align*} where $H_i$ and $K_i$ are defined by
	\begin{align*}
		H_i(v,v)&=\sum_{\gamma\in\Gamma}f^2(\gamma)\int_{\partial X_i}|\pi_i^*d_{(x_i,\theta_i)}B_i(v)|^2_{g_i}d((\pi_i)_*\nu_{\gamma.o}),\\
		K_i(v,v)&=\sum_{\gamma\in\Gamma}f^2(\gamma)\int_{\partial X_i} \pi_i^* \mathrm{Hess}_{(x_i,\theta_i)}B_i(v,v)d((\pi_i)_*\nu_{\gamma.o}).
	\end{align*} 
	Note that \eqref{eqn:K and F} holds for each $H_i$ and $K_i$; indeed, we have for $v_i\in T_{x_i}X_i$ and $\dot{f}\in Q$ with $||\dot{f}||_{\ell^2}=1$,
	\begin{equation}\label{eqn:K and F projected}
		K_i(d\mathrm{Bar}|_{Q}(\dot{f}),v_i)\le 2[H_i(v_i,v_i)]^{1/2}.
	\end{equation} 
	
	By the classical fact in \cite{Besson:1991aa}, for each factor $X_i$ that is not a Cayley hyperbolic space, there exist orthogonal endomorphisms $J_1,\ldots,J_{d_i-1}$ at each point defining the complex, quaternionic structure (where $d$ is the real dimension of the division algebra) such that
	\begin{equation}\label{eqn:formula for K and H}
		\begin{split}
			K_i=\mathrm{Id}&-H_i-\sum_{l=1}^{d-1}J_l H_i J_l\in\mathrm{End}(T_{x_i} X_i)\\
			&\text{ (if }X_i\text{ is the real hyperbolic space}, K_i=\mathrm{Id}-H_i).
		\end{split}
	\end{equation}
	When $X_i$ is the Cayley hyperbolic space $\mathbb{O}H^2$, the endomorphisms $J_l$ are not globally well-defined due to the non-associativity of the octonions. However, as shown by Ruan \cite{Ruan:2024aa}, the relation \eqref{eqn:formula for K and H} can be replaced by the analogous identity established for the Cayley case, and the subsequent Jacobian estimate remains valid.

	Since $\mathrm{tr}_{g_i}H_i=1$, by \cite[Lemma 5.5]{Besson:1996aa} (and \cite{Ruan:2024aa} for the Cayley case), we have
	\begin{equation}
		\frac{(\det H_i)^{1/2}}{\det\left(\mathrm{Id}-H_i-\sum_{l=1}^{d-1}J_l H_i J_l\right)}\le\left(\frac{\sqrt{n_i}}{h_i}\right)^{n_i}
	\end{equation} and equality holds if and only if $H_i=\frac{1}{n_i}\mathrm{Id}$. Applying this to \eqref{eqn:jacobian computation1}, it follows that
	\begin{equation}
		|\mathrm{Jac Bar}|_Q|\le 2^n\prod_{i=1}^{k}\left(\frac{\sqrt{n_i}}{h_i}\right)^{n_i}=\left(\frac{4n}{h(g_{\mathrm{min}})^2}\right)^{n/2},
	\end{equation} where $h_i$ is the volume entropy of $(X_i,g_i)$; it concludes the inequality \eqref{eqn:Jac_est}.

\end{proof}

Now we can compute the spherical volume for such manifold $M$.
\begin{proof}[Proof of Theorem \ref{thm:spherical volume}]
	By the standard approximation argument, for any $\phi\in\mathscr{H}_M$ and the integral current $C=\phi_{\sharp}[\![1_M]\!]$ in $S^\infty/\lambda_\Gamma(\Gamma)$, there is a polyhedral chain $P$ of dimension $n$ such that the total mass $\mathbf{M}(P)$ is arbitrarily close to $\mathbf{M}(C)(=\mathrm{Vol}(M,\phi^{*}\mathbf{g}_{\mathrm{Hil}}))$ and $\mathrm{spt}(P)\subset\mathbb{S}^{+}/\lambda_\Gamma(\Gamma)$. Then, for almost every $q\in\mathrm{spt}(P)$, the Jacobian of the map $\mathrm{Bar}$ is well-defined and satisfies the inequality \eqref{eqn:Jac_est}; this implies by the area formula that
	\[
		\mathbf{M}(P)\ge\left(\frac{h(g_{\mathrm{min}})^2}{4n}\right)^{n/2}\mathrm{Vol}(M,g_{\mathrm{min}}).
	\] Thus, by the definition of the spherical volume, we have
	\[
		\textrm{SphereVol}(M)\ge\textrm{Vol}\left(M,\frac{h(g_{\textrm{min}})^2}{4n}g_{\textrm{min}}\right).
	\] The reverse inequality is known for the general case in \cite[Corollary 3.13]{Besson:1991aa}, i.e., given a closed oriented Riemannian $n$-manifold $(M,g)$, the following inequality holds:
	\[
		\textrm{SphereVol}(M)\le\mathrm{Vol}\left(M,\frac{h(g)^2}{4n}g\right).
	\] This concludes the proof of Theorem \ref{thm:spherical volume}.
\end{proof}

\subsection{Intrinsic Uniqueness of the Spherical Plateau Solution}
It may be difficult to establish the general uniqueness of the spherical Plateau solution as integral current spaces. Instead, we recall the notion of intrinsic isomorphism introduced in \cite{song2024sphericalvolumesphericalplateau} and show that any spherical Plateau solution for the product case is intrinsically isomorphic to the product manifold up to scaling.

\begin{definition}
	Let $(N,d_{g_N},[\![1_N]\!])$ be the integral current space induced by a Riemannian manifold $(N,g)$. The integral current spaces $(X,d,T)$ and $(N,d_{g_N},[\![1_N]\!])$ are said to be intrinsically isomorphic if there exists an isometry
	\begin{align*}
		\varphi:(N,d_{g_N})\to (X,L_d)
	\end{align*}
	such that $(\mathrm{id}_X\circ\varphi)_{\sharp}[\![1_N]\!]=T$, where $L_d$ is the intrinsic metric on $X$ induced by $d$. We call such a map $\varphi$ an intrinsic isomorphism.
\end{definition}
Note that the intrinsic isomorphism does not produce an equivalence relation on the set of integral current spaces, but it is a useful tool to compare the intrinsic structures of two integral current spaces especially when one of them is induced by a Riemannian structure.



\begin{theorem}\label{thm:uniqueness}
	Let $(M^n,g_0)$ be a closed oriented Riemannian manifold which is locally isometric to a product $(X_1^{n_1},g_1)\times\cdots (X_k^{n_k},g_k)$ of negatively curved symmetric spaces (where $(X_i^{n_i},g_i)$ has maximum sectional curvature $-1$ and dimension $n_i$ at least $3$). Then any spherical Plateau solution for $M$ is intrinsically isomorphic to $\left(M,\frac{h(g_{\mathrm{min}})^2}{4n}g_{\mathrm{min}}\right)$ where $g_{\textrm{min}}$ is defined as in Theorem \ref{thm:spherical volume}.
\end{theorem}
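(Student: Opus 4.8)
The plan is to realize the spherical Plateau solution as the image of $M$ under the barycenter map with the normalized model metric, and to extract rigidity from the sharpness of Lemma \ref{lemma:jacobian bound}. Write $g_m:=\frac{h(g_{\mathrm{min}})^2}{4n}g_{\mathrm{min}}$, and note that, taking $g_m$ on the target, Lemma \ref{lemma:jacobian bound} reads $|\mathrm{Jac}_{g_m}\mathrm{Bar}|\le 1$ on tangent $n$-planes of totally geodesic simplices in $\mathbb{S}^+$, and that every $n$-plane in $T_f\mathbb{S}^+$ arises this way. The first and hardest step is the \emph{distance estimate}: the map $\mathrm{Bar}\colon(\mathbb{S}^+/\lambda_\Gamma(\Gamma),\mathbf{g}_{\mathrm{Hil}})\to(M,g_m)$ is $1$-Lipschitz, hence uniformly continuous and extends to a $1$-Lipschitz map on the closure $S^\infty/\lambda_\Gamma(\Gamma)$. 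This is obtained by differentiating the defining equation \eqref{eqn:critical point of bar} as in the proof of Lemma \ref{lemma:jacobian bound}, using the product splittings of $B_0$, $H_f$, $K_f$ and the rank-one identities \eqref{eqn:formula for K and H}, and adapting \cite[Section 5]{Connell:2003aa} (which rests on \cite{Besson:1995aa}) from probability measures on $\tilde N$ to the atomic measures $\sum_{\gamma\in\Gamma}|f(\gamma)|^2\delta_{\gamma.o}$. I expect this step, together with the equality analysis below, to be the principal obstacle: the crude operator-norm bound coming from $K_f(w,w)\le 2[H_f(w,w)]^{1/2}$ is sharp only where $H_f$ is a multiple of the identity, so one genuinely needs the refined Connell--Farb argument, now carried out factor by factor.

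Next I would set up the rigidity argument. Let $C_\infty=(X_\infty,d_\infty,T_\infty)$ be the given spherical Plateau solution, realized as an intrinsic flat limit $(\phi_i)_\sharp[\![1_M]\!]\to T_\infty$ of a minimizing sequence $\phi_i\in\mathscr{H}_M$; after a finite-dimensional truncation and re-smoothing I may assume each $\phi_i$ is a smooth map into $\mathbb{S}^+/\lambda_\Gamma(\Gamma)$ with $\mathbf{M}((\phi_i)_\sharp[\![1_M]\!])\to\mathrm{SphereVol}(M)$ and $(\phi_i)_\sharp[\![1_M]\!]\to T_\infty$, all the currents living in the fixed complete space $S^\infty/\lambda_\Gamma(\Gamma)$. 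Put $F_i:=\mathrm{Bar}\circ\phi_i\colon M\to M$. Since $M$ is aspherical (its universal cover $X_1\times\cdots\times X_k$ is contractible) and $\mathrm{Bar}$ is $\Gamma$-equivariant, $F_i$ induces the identity on $\pi_1$, hence has degree one, so $(F_i)_\sharp[\![1_M]\!]$ is homologous to $[\![1_M]\!]$ in $(M,g_m)$, while
\[
	\mathbf{M}\big((F_i)_\sharp[\![1_M]\!]\big)\le\int|\mathrm{Jac}_{g_m}\mathrm{Bar}|\,d\big\|(\phi_i)_\sharp[\![1_M]\!]\big\|\le\mathbf{M}\big((\phi_i)_\sharp[\![1_M]\!]\big)\longrightarrow\mathrm{Vol}(M,g_m).
\]
As $[\![1_M]\!]$ is calibrated by the volume form of $g_m$, it is the unique mass minimizer in its homology class, so $(F_i)_\sharp[\![1_M]\!]\to[\![1_M]\!]$ in the flat, hence intrinsic flat, topology. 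On the other hand, $\mathrm{Bar}$ being Lipschitz makes pushforward continuous under intrinsic flat convergence, so $(F_i)_\sharp[\![1_M]\!]=\mathrm{Bar}_\sharp\big((\phi_i)_\sharp[\![1_M]\!]\big)\to\mathrm{Bar}_\sharp T_\infty$. Therefore $\mathrm{Bar}_\sharp T_\infty=[\![1_M]\!]$.

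Now I run the sandwich that forces equality. By lower semicontinuity of mass, Lemma \ref{lemma:jacobian bound}, Theorem \ref{thm:spherical volume}, and $\mathrm{Bar}_\sharp T_\infty=[\![1_M]\!]$,
\[
	\mathrm{Vol}(M,g_m)=\mathbf{M}(\mathrm{Bar}_\sharp T_\infty)\le\int|\mathrm{Jac}_{g_m}\mathrm{Bar}|\,d\|T_\infty\|\le\mathbf{M}(T_\infty)\le\liminf_i\mathbf{M}\big((\phi_i)_\sharp[\![1_M]\!]\big)=\mathrm{SphereVol}(M)=\mathrm{Vol}(M,g_m),
\]
so every inequality is an equality: $\mathbf{M}(T_\infty)=\mathrm{Vol}(M,g_m)$, $|\mathrm{Jac}_{g_m}\mathrm{Bar}|=1$ for $\|T_\infty\|$-almost every point, and $\mathrm{Bar}$ carries $T_\infty$ onto $[\![1_M]\!]$ with multiplicity one a.e. Tracking the equality case in the proof of Lemma \ref{lemma:jacobian bound}, the Jacobian equality at $\|T_\infty\|$-a.e.\ $f$ forces $H_i=\tfrac1{n_i}\mathrm{Id}$ on each factor and equality in the Cauchy--Schwarz step \eqref{eqn:K and F}, which pins $d_f\mathrm{Bar}$ on the approximate tangent $n$-plane to be a linear isometry onto $(T_{\mathrm{Bar}(f)}M,g_m)$. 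Combined with the distance estimate — so that $\mathrm{Bar}$ is $1$-Lipschitz for the intrinsic metrics — and multiplicity one, this shows $\mathrm{Bar}\colon(X_\infty,L_{d_\infty})\to(M,d_{g_m})$ is an a.e.-infinitesimally-isometric surjection which cannot strictly contract any pair (a strict contraction would exhibit $[\![1_M]\!]=\mathrm{Bar}_\sharp T_\infty$ with mass $<\mathrm{Vol}(M,g_m)$), hence is a bijective isometry; this equality-case step is the product analogue of the rigidity lemma in \cite{song2024sphericalvolumesphericalplateau}. Setting $\varphi:=(\mathrm{Bar}|_{X_\infty})^{-1}\colon(M,d_{g_m})\to(X_\infty,L_{d_\infty})$ then yields an isometry with $\varphi_\sharp[\![1_M]\!]=\varphi_\sharp(\mathrm{Bar}_\sharp T_\infty)=T_\infty$, i.e.\ an intrinsic isomorphism between $C_\infty$ and $\big(M,\frac{h(g_{\mathrm{min}})^2}{4n}g_{\mathrm{min}}\big)$, completing the proof.
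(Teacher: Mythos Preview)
Your proposal has a genuine gap at its very first step: the barycenter map is \emph{not} $1$-Lipschitz, nor even globally Lipschitz, on $\mathbb{S}^+/\lambda_\Gamma(\Gamma)$. Lemma~\ref{lemma:jacobian bound} bounds only the Jacobian (a determinant), not the operator norm of $d\mathrm{Bar}$. From \eqref{eqn:K and F} one gets $|d\mathrm{Bar}|_Q(\dot f)|\le 2[H_f(v,v)]^{1/2}/K_f(v,v)$ with $v=d\mathrm{Bar}|_Q(\dot f)/|d\mathrm{Bar}|_Q(\dot f)|$; since $K_i=\mathrm{Id}-H_i-\sum J_lH_iJ_l$ by \eqref{eqn:formula for K and H}, this blows up whenever some $H_i$ has an eigenvalue near $1$, which does occur for suitable $f\in\mathbb{S}^+$. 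This is exactly why the paper proves Lemma~\ref{lemma:differential and length approx}: the operator-norm and length estimates \eqref{eqn:differential_est}--\eqref{eqn:length_est} are valid only \emph{locally near points where the Jacobian is already almost maximal}, with constants degenerating as one leaves that regime (see Claim~\ref{claim1}, where the Lipschitz constant $K_1$ depends on the eigenvalue gap $\kappa$). Connell--Farb \cite{Connell:2003aa} likewise prove only a Jacobian bound, not a global distance estimate, so your appeal to their argument does not rescue the step.

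This gap propagates: without a global Lipschitz bound you cannot extend $\mathrm{Bar}$ to $S^\infty/\lambda_\Gamma(\Gamma)$, and more seriously you cannot make sense of $\mathrm{Bar}_\sharp T_\infty$. The intrinsic flat limit $C_\infty$ is an \emph{abstract} integral current space, not a current in $S^\infty/\lambda_\Gamma(\Gamma)$; Wenger's compactness gives no flat convergence in the fixed ambient Hilbert quotient, so ``pushforward continuous under intrinsic flat convergence'' is not available. The paper circumvents both issues by (i) isolating open sets $\Omega_i\subset\mathrm{spt}(C_i)$ on which the Jacobian is uniformly close to its maximum, (ii) using Lemma~\ref{lemma:differential and length approx} to obtain \emph{uniform local} Lipschitz control of $\mathrm{Bar}$ on small neighborhoods $\Omega_{i,r^{(i)}}$ of those sets, (iii) cutting to integral currents $D_i=C_i\llcorner\Omega_{i,r^{(i)}}$ with $\mathbf{M}(\partial D_i)\to 0$, and (iv) invoking \cite[Proposition~1.4]{Song:2023aa} to produce a limit map $\mathrm{Bar}_\infty$ directly on $\mathrm{spt}(S_\infty)$ that is an intrinsic isometry. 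Your sandwich argument and equality analysis are morally the right endgame, but they require precisely this detour through good sets and a limit-map construction rather than a nonexistent global Lipschitz extension.
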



We first show the following lemma saying that when the Jacobian bound of the barycenter map is almost saturated, the differential of the map becomes close to a linear isometry up to constant scaling. This lemma will be used to construct the limit of barycenter maps defined on a minimizing sequence.

\begin{lemma}\label{lemma:differential and length approx}
	Let $f\in\mathbb{S}^{+}$ and $Q$ be the tangent $n$-plane at $f$ of a totally geodesic $n$-simplex in $\mathbb{S}^{+}$ passing through $f$. For any $\eta>0$ small enough, there exists $c_{\eta}>0$ with $\lim_{\eta\to 0}c_\eta=0$ such that the following holds: if
	\[
		\left|\mathrm{Jac}\,\mathrm{Bar}|_{Q}\right|\ge\left(\frac{4n}{h(g_{\mathrm{min}})^2}\right)^{n/2}-\eta,
	\]
	then for any unit tangent vector $\vec{u}\in Q$,
	\begin{equation}\label{eqn:differential_est}
		|d\mathrm{Bar}|_{Q}(\vec{u})|\ge \left(\frac{4n}{h(g_{\mathrm{min}})^2}\right)^{1/2}-c_{\eta}
	\end{equation} and for any connected continuous piecewise geodesic curve $\alpha\subset\mathbb{S}^+$ of length less than $\eta$ starting at $f$, we have
	\begin{equation}\label{eqn:length_est}
		\mathrm{length}_{g_{\mathrm{min}}}(\mathrm{Bar}(\alpha))\le \left(\left(\frac{4n}{h(g_{\mathrm{min}})^2}\right)^{1/2}+c_{\eta}\right)\mathrm{length}(\alpha).
	\end{equation} 
\end{lemma}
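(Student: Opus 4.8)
The plan is to revisit the proof of Lemma \ref{lemma:jacobian bound} and extract quantitative rigidity from the chain of inequalities, using that equality in the Besson--Courtois--Gallot-type estimate forces $H_i=\tfrac{1}{n_i}\mathrm{Id}$ on each factor. First I would fix $f\in\mathbb{S}^+$ and the $n$-plane $Q$, and recall that the Jacobian bound was obtained by composing three estimates: the Cauchy--Schwarz inequality \eqref{eqn:K and F} (equivalently \eqref{eqn:jacobian computation1}), the factorization $\tfrac{(\det H_f)^{1/2}}{\det K_f}=\prod_i\tfrac{(\det H_i)^{1/2}}{\det K_i}$, and the pointwise algebraic bound $\tfrac{(\det H_i)^{1/2}}{\det K_i}\le(\sqrt{n_i}/h_i)^{n_i}$ from \cite[Lemma 5.5]{Besson:1996aa}. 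Since the product of all these upper bounds equals $(4n/h(g_{\mathrm{min}})^2)^{n/2}$, the hypothesis $|\mathrm{Jac}\,\mathrm{Bar}|_Q|\ge(4n/h(g_{\mathrm{min}})^2)^{n/2}-\eta$ means each inequality is saturated up to an error controlled by $\eta$. A compactness/continuity argument then applies: the space of admissible symmetric positive semidefinite endomorphisms $H_i$ with $\mathrm{tr}_{g_i}H_i=1$ is compact, the function $H_i\mapsto \tfrac{(\det H_i)^{1/2}}{\det K_i}$ is continuous with a unique maximizer $H_i=\tfrac{1}{n_i}\mathrm{Id}$, so if $\tfrac{(\det H_i)^{1/2}}{\det K_i}\ge(\sqrt{n_i}/h_i)^{n_i}-\eta'$ then $\|H_i-\tfrac{1}{n_i}\mathrm{Id}\|\le\omega_i(\eta')$ with $\omega_i(\eta')\to0$. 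Likewise near-equality in Cauchy--Schwarz in \eqref{eqn:K and F} forces $d\mathrm{Bar}|_Q(\vec u)$ to be controlled, which will give the two desired estimates.

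Next I would derive \eqref{eqn:differential_est}. Once $H_f$ (hence each $H_i$) is within $\omega(\eta)$ of $\tfrac{1}{n}\mathrm{Id}$ (resp.\ $\tfrac{1}{n_i}\mathrm{Id}$), the formula \eqref{eqn:formula for K and H} $K_i=\mathrm{Id}-H_i-\sum_l J_lH_iJ_l$ shows $K_f$ is within $O(\omega(\eta))$ of $\tfrac{h(g_{\mathrm{min}})^2}{4n}\mathrm{Id}$ — indeed at $H_i=\tfrac{1}{n_i}\mathrm{Id}$ one has $K_i=(1-\tfrac{d}{n_i})\mathrm{Id}=\tfrac{h_i^2}{n_i}\mathrm{Id}$, and assembling across factors with the $(\alpha_i\sqrt k)^{-1}$ weights yields $K_f\approx\tfrac{h(g_{\mathrm{min}})^2}{4n}\mathrm{Id}$. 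The differentiated critical-point equation gives $K_f(d\mathrm{Bar}|_Q(\dot f),v)=-2\sum_\gamma f\dot f\int d_{(x,\theta)}B_0(v)\,d\nu_{\gamma.o}$, and saturation of Cauchy--Schwarz in \eqref{eqn:K and F} forces the right side to be close, for an optimally chosen $v$, to $2[H_f(v,v)]^{1/2}=2(\tfrac1n)^{1/2}|v|$ up to $c_\eta$. Combined with $K_f\approx\tfrac{h(g_{\mathrm{min}})^2}{4n}\mathrm{Id}$, this yields $|d\mathrm{Bar}|_Q(\vec u)|\ge(4n/h(g_{\mathrm{min}})^2)^{1/2}-c_\eta$ for unit $\vec u\in Q$. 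Keeping track of the reverse direction simultaneously gives the matching upper bound $|d\mathrm{Bar}|_Q(\vec u)|\le(4n/h(g_{\mathrm{min}})^2)^{1/2}+c_\eta$, i.e.\ $d\mathrm{Bar}|_Q$ is a near-isometry scaled by $(4n/h(g_{\mathrm{min}})^2)^{1/2}$.

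For the length estimate \eqref{eqn:length_est}, I would integrate the differential bound along the curve $\alpha$. The subtlety is that the scalar bound must hold not merely at $f$ but at every point of $\alpha$; this is where the smallness hypothesis $\mathrm{length}(\alpha)<\eta$ enters, together with the fact that $\alpha$ lies in a totally geodesic simplex whose tangent plane stays close to $Q$ along the short curve, so the hypothesis on $|\mathrm{Jac}\,\mathrm{Bar}|$ — which one can arrange to be an open condition or propagate by continuity of the Jacobian along $\mathbb{S}^+$, at the cost of enlarging $c_\eta$ — continues to hold along $\alpha$. Granting a uniform bound $|d\mathrm{Bar}|_{T\alpha}(\vec u)|\le(4n/h(g_{\mathrm{min}})^2)^{1/2}+c_\eta$ along $\alpha$, the chain rule and $\mathrm{length}_{g_{\mathrm{min}}}(\mathrm{Bar}(\alpha))=\int|d\mathrm{Bar}(\alpha'(t))|\,dt$ finish the proof.

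I expect the main obstacle to be the last point: controlling $d\mathrm{Bar}$ uniformly along $\alpha$ rather than just at the single point $f$, since the Jacobian hypothesis is only assumed at $f$. The clean fix is to observe that the quantities $H_f, K_f$ and hence $|\mathrm{Jac}\,\mathrm{Bar}|_Q|$ vary continuously (indeed real-analytically) in $f\in\mathbb{S}^+$ and in the plane $Q$, so shrinking $\eta$ (and correspondingly adjusting $c_\eta$, still with $c_\eta\to0$) guarantees the near-saturation hypothesis — and therefore the near-isometry conclusion — holds on the whole length-$\eta$ curve; one then only needs that $\alpha$ being piecewise geodesic of small length keeps its tangent directions inside a slightly enlarged cone around $Q$, which is immediate from the structure of totally geodesic simplices in the round sphere $S^\infty$. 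A secondary technical point is making the quantitative version of \cite[Lemma 5.5]{Besson:1996aa} with an explicit modulus of continuity $\omega_i$; this follows from the uniqueness of the maximizer on a compact set by a standard argument, and no explicit rate is needed.
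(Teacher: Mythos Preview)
Your approach to \eqref{eqn:differential_est} matches the paper's in spirit: near-maximality of the Jacobian pins each $H_i$ near $\tfrac{1}{n_i}\mathrm{Id}$. The paper invokes the quantitative estimate \cite[Proposition~B.5]{Besson:1995aa} rather than a compactness argument, and then extracts the lower bound more cleanly than you do: from the eigenvalue control it first proves the \emph{upper} bound $|d\mathrm{Bar}|_Q(\vec u)|\le(4n/h(g_{\mathrm{min}})^2)^{1/2}+c'_\eta$ directly via \eqref{eqn:K and F}, after which the lower bound is immediate since the Jacobian (the product of the singular values) is near its maximum while each singular value is bounded above. Your route through ``saturation of Cauchy--Schwarz forces the right side close to $2[H_f(v,v)]^{1/2}$'' is vaguer and, once you have the upper bound, unnecessary.

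There is a genuine gap in your treatment of \eqref{eqn:length_est}. The curve $\alpha$ is an \emph{arbitrary} piecewise geodesic in $\mathbb{S}^+$ starting at $f$; it is not assumed to lie in any totally geodesic $n$-simplex, so there is no tangent $n$-plane along $\alpha$ and hence no Jacobian hypothesis to propagate. What must be propagated is the eigenvalue bound on the intrinsic endomorphism $H_{f'}$ (which depends only on $f'$, not on any plane); the pointwise upper bound on $|d\mathrm{Bar}(\alpha')|$ then follows from \eqref{eqn:K and F} and \eqref{eqn:formula for K and H}. But continuity of $f'\mapsto H_{f'}$ is not governed by $\|f-f'\|_{\ell^2}$ alone: since $H_{f'}$ lives at $\mathrm{Bar}(f')$, the paper's Claim~\ref{claim2} (after \cite[Lemma~7.5.b]{Besson:1995aa}) gives only
\[
|H_{f'}\circ P-H_f|_{\tilde g_{\mathrm{min}}}\le K_2\bigl(\mathrm{length}_{\tilde g_{\mathrm{min}}}(\beta)+\|f-f'\|_{\ell^2}\bigr),
\]
so one simultaneously needs control on the barycenter displacement. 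The paper closes this loop with Claim~\ref{claim1} (after \cite[Lemma~7.5.a]{Besson:1995aa}): as long as the eigenvalues stay below $1-\kappa/2$, the map $\mathrm{Bar}$ is $K_1$-Lipschitz along $\alpha$, and a standard bootstrap then keeps the eigenvalues below $1-\kappa/2$ on the whole short curve. This two-claim mechanism is what makes $c_\eta$ uniform in $f$; a bare appeal to real-analyticity on the infinite-dimensional, noncompact set $\mathbb{S}^+$ does not produce a basepoint-independent modulus.
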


\begin{proof}
	These inequalities can be obtained by applying the argument in \cite[Section 7]{Besson:1995aa} to each $H_i$ defined in the previous lemma and combining them in the product coordinates. We include the proof for completeness.

	Let $0\le\mu^{i}_{1}(f)\le\cdots\le\mu_{n_i}^{i}(f)\le 1$ be the eigenvalues of each $H_i$. Then by \cite[Proposition B.5]{Besson:1995aa}, there exist constants $A_i>0$ such that for each $i=1,\ldots,k,$
	\[
		\frac{(\det H_i)^{1/2}}{\det K_i}\le \left(\frac{\sqrt{n_i}}{h_i}\right)^{n_i}\left(1-A_i\sum_{j=1}^{n_i}(\mu^i_j(f)-\frac{1}{n_i})^2\right).
	\] Therefore, if $|\mathrm{Jac Bar}|_Q|\ge\left(\frac{4n}{h(g_{\mathrm{min}})^2}\right)^{n/2}-\eta$ for small $\eta>0$, then there exists $\eta'>0$ depending only on $\eta$ such that $\lim_{\eta\to 0}\eta'=0$ and 
	\[
		\mu^i_{n_i}(f)\le\frac{1}{n_i}+\eta' \text{ for all }i=1,\ldots,k.
	\] Moreover, it follows from \eqref{eqn:K and F} that for any unit vector $\vec{u}\in Q$,
	\begin{equation}\label{eqn:differential computation}
		|d\mathrm{Bar}|_Q(\vec{u})|\le \left(\frac{4n}{h(g_{\mathrm{min}})^2}\right)^{1/2}+c'_{\eta}
	\end{equation}
	where $\lim_{\eta\to 0}c'_\eta=0$. Combining the above two inequalities, we have for any unit tangent vector $\vec{u}\in Q$
	\begin{equation}
		|d\mathrm{Bar}|_Q(\vec{u})|\ge \left(\frac{4n}{h(g_{\mathrm{min}})^2}\right)^{1/2}-c_{\eta}
	\end{equation} where $c_\eta>0$ and $\lim_{\eta\to 0}c_{\eta}=0$, which shows the first inequality \eqref{eqn:differential_est}.
	To prove the second inequality \eqref{eqn:length_est}, we need the following two estimates: 
	\begin{claim}\label{claim1}(\cite[Lemma 7.5a]{Besson:1995aa}). 
		Let $\kappa>0$ and let $\alpha\subset\mathbb{S}^{+}$ be a connected continuous piecewise geodesic curve. Suppose that for each $f\in\alpha, i=1,\ldots,k$,
		\[
			\mu^i_{n_i}(f)\le 1-\kappa/2.
		\]
		Then
		\[
			\mathrm{length}_{\tilde{g}_{\mathrm{min}}}(\mathrm{Bar}(\alpha))\le K_1\mathrm{length}(\alpha)
		\] for a constant $K_1>0$ depending only on $\kappa$.
	\end{claim}
	
	\begin{proof}[Proof of Claim \ref{claim1}]
		Given $f\in\alpha$, let $V$ be the tangent $1$-plane of $\alpha$ at $f$ and $\dot{f}\in V$ with $||\dot{f}||_{\ell^2}=1$ and $d_f\mathrm{Bar}(\dot{f})\ne 0$. Set $v:=\frac{d_f\mathrm{Bar}(\dot{f})}{|d_f\mathrm{Bar}(\dot{f})|}=(v_1,\ldots,v_n)\in T_x\tilde{M}$ where $v_i=d_{x}\pi_i(v)\in T_{x_i}X_i$ for $\pi_i(x)=x_i$.
		By the hypothesis on $\mu_{n_i}^i$, \eqref{eqn:K and F projected} and \eqref{eqn:formula for K and H}, we have
		\begin{align*}
			&|d_x\pi_i(d\mathrm{Bar}|_Q(\dot{f}))|\le 2\frac{[H_i(v_i,v_i)]^{1/2}}{K_i(v_i,v_i)}\text{, thus, }\\
			&|d\mathrm{Bar}|_Q(\dot{f})|\le 2\sqrt{\sum_{i=1}^{k}\frac{H_i(v_i,v_i)}{[1-H_i(v_i,v_i)-\sum_{l=1}^{d-1}J_lH_iJ_l(v_i,v_i)]^2}}\le K_1
		\end{align*} for some constant $K_1>0$ depending only on $\kappa$. By integrating it, the desired estimate follows.

	\end{proof}

	\begin{claim}\label{claim2}(\cite[Lemma 7.5.b]{Besson:1995aa}). 
		Let $f,f'\in\mathbb{S}^{+}$ and let $\beta$ be the geodesic segment joining $\mathrm{Bar}(f)$ and $\mathrm{Bar}(f')$. Let $P$ be the parallel transport from $\mathrm{Bar}(f)$ to $\mathrm{Bar}(f')$ along $\beta$. Then
		\[
			|H_{f'}\circ P-H_f|_{\tilde{g}_{\mathrm{min}}}\le K_2(\mathrm{length}_{\tilde{g}_{\mathrm{min}}}(\beta)+||f-f'||_{\ell^2})
		\] for a constant $K_2>0$.
	\end{claim}
	
	\begin{proof}[Proof of Claim \ref{claim2}] 
		Given $x=\mathrm{Bar}(f)$ and $x'=\mathrm{Bar}(f')$, let $Y$ be a unit tangent vector in $T_x\tilde{M}$ and $Y'$ be the parallel transport of $Y$ at $x'$ along $\beta$. Then we have
		\begin{align*}
			&|H_{f'}(Y',Y')-H_f(Y,Y)|\\
			&\qquad\le \left|\sum_{\gamma\in\Gamma}f^2(\gamma)\left(\int_{\partial_F \tilde{M}}\left|d_{(x,\theta)}B_0(Y')\right|_{\tilde{g}_{\mathrm{min}}}^2-\left|d_{(x,\theta)}B_0(Y)\right|_{\tilde{g}_{\mathrm{min}}}^2\, d\nu_{\gamma.o}(\theta)\right)\right|\\
			&\qquad\qquad+\left|\sum_{\gamma\in\Gamma}((f')^2(\gamma)-f^2(\gamma))\left(\int_{\partial_F \tilde{M}}\left|d_{(x,\theta)}B_0(Y')\right|_{\tilde{g}_{\mathrm{min}}}^2\, d\nu_{\gamma.o}(\theta)\right)\right|\\
			&\qquad\le K_2(\mathrm{length}_{\tilde{g}_{\mathrm{min}}}(\beta)+||f'-f||_{\ell^2}).
		\end{align*}
		Here, we used the facts that the Hessian of $B_0$ is uniformly bounded and the norm of $dB_0$ is equal to $1$.
	\end{proof}

	To obtain the length estimate \eqref{eqn:length_est}, let $\alpha\subset\mathbb{S}^{+}$ be a continuous piecewise geodesic curve that joins $f$ and $f'$ and has length less than some constant $\eta>0$. Then for any fixed $\kappa>0$, the following holds: for each $i=1,\ldots,k,$
	\[
		\text{if }\mu^i_{n_i}(f)\le 1-\kappa,\text{ then }\mu^i_{n_i}(f')\le 1-\kappa+c'_{\eta}
	\] where $c'_\eta$ depends only on $\eta$ and $\lim_{\eta\to 0}c'_\eta=0$. Indeed, one can apply the same argument in \cite[Lemma 2.4]{song2024sphericalvolumesphericalplateau} to each $\mu_{n_i}^i$ and simply pick the maximum of those constants. By using this, we get
	\[
		|d\mathrm{Bar}(\alpha')|\le \left(\frac{4n}{h(g_{\mathrm{min}})^2}\right)^{1/2}+c''_\eta
	\] where $\alpha'$ is the unit tangent vector of $\alpha$, and $c''_\eta$ is constant depending only on $\eta>0$ such that $\lim_{\eta\to 0}c''_\eta=0$. By choosing $\eta$ small enough and integrating the above inequality, we obtain the inequality \eqref{eqn:length_est}.

\end{proof}

The uniqueness theorem (Theorem \ref{thm:uniqueness}) follows by the same argument as in \cite[Theorem 2.6]{song2025entropy}; their proof is generally applicable for the case that admits the barycenter map defined on $\mathbb{S}^+/\lambda_\Gamma(\Gamma)$ satisfying Lemmas \ref{lemma:jacobian bound} and \ref{lemma:differential and length approx}.
\begin{proof}[Proof of Theorem \ref{thm:uniqueness}]
	Let $\{\phi_i\}\subset\mathscr{H}_M$ be a minimizing sequence, i.e.,
	\begin{align*}
		\lim_{i\to\infty} \mathrm{Vol}(M,\phi_i^{*}(\mathbf{g}_{\mathrm{Hil}}))=\mathrm{SphereVol}(M)=\mathrm{Vol}\left(M,\frac{h(g_{\mathrm{min}})^2}{4n}g_{\mathrm{min}}\right).
	\end{align*} 
	Suppose that the integral currents $C_i:=(\phi_i)_{\sharp}[\![1_M]\!]$ converge in the intrinsic flat topology to an integral current space
	\begin{align*}
		C_\infty=(X_\infty,d_\infty,S_\infty).
	\end{align*}
	We can assume without loss of generality that
	\begin{itemize}
		\item for all $i$ and $y\in M$, any lift of $\phi_i(y)\in S^\infty/\lambda_\Gamma(\Gamma)$ in $S^\infty$ has finite support, i.e., $\text{spt}(C_i)\subset\mathbb{S}^+/\lambda_\Gamma(\Gamma)$, and
		\item $C_i$ is a polyhedral chain, i.e., a finite union of embedded totally geodesic $n$-simplices in $\mathbb{S}^+/\lambda_\Gamma(\Gamma)$. (cf. \cite[Lemma 1.6]{song2025entropy})
	\end{itemize}
	The above perturbation is useful since any interior point of a totally geodesic $n$-simplex in $\mathbb{S}^+/\lambda_\Gamma(\Gamma)$ admits a well-defined tangent $n$-plane, and the barycenter map is differentiable at such points. 

	For convenience, let us denote $g_m=\frac{h(g_{\mathrm{min}})^2}{4n}g_{\mathrm{min}}$. In the remaining proof, Jacobians, lengths, and distances are computed with respect to the metric $g_m$. Fix $o\in\tilde{M}$ and let $\text{Bar}:\mathbb{S}^+/\lambda_\Gamma(\Gamma)\to M$ be the barycenter map defined in \eqref{eqn:barycenter map}. By $\Gamma$-equivariance of $\text{Bar}$ and $\phi_i\in\mathscr{H}_M$, it follows that for any $i$, the restriction $\text{Bar}|_{\text{spt}(C_i)}$ is a Lipschitz homotopy equivalence and
	\begin{equation}\label{eqn:pushforward of barycenter map}
		\text{Bar}_{\sharp}(C_i)=[\![1_M]\!].
	\end{equation}
	By lower semicontinuity of the mass under intrinsic flat convergence \cite{Sormani:2011aa}, we have
	\begin{equation}
		\mathbf{M}(C_\infty)\le\lim_{i\to\infty}\mathbf{M}(C_i)=\mathrm{Vol}(M,g_m).
	\end{equation}
	For any point $q$ in the interior of a totally geodesic $n$-simplex in $\text{spt}(C_i)$, the $n$-dimensional Jacobian of $\text{Bar}$ along the tangent $n$-plane of $\text{spt}(C_i)$ is well-defined and is bounded from above by $1$ with respect to the metric $g_m$ by Lemma \ref{lemma:jacobian bound}. This implies, by the area formula and \eqref{eqn:pushforward of barycenter map}, that
	\begin{align*}
		\mathbf{M}(C_i)\ge \int_{\mathrm{spt}(C_i)} \big|\mathrm{Jac}(\mathrm{Bar}|_{\mathrm{spt}(C_i)})\big|\, d\mathscr{H}^n=\mathrm{Vol}(M,g_m).
	\end{align*}
	It then follows that there are open subsets $\Omega_i$ in the smooth part of $\text{spt}(C_i)$ such that
	\begin{equation}\label{eqn:property of omegas}
		\begin{split}
			&\lim_{i\to\infty}\mathbf{M}(C_i\llcorner\Omega_i)=\lim_{i\to\infty}\mathbf{M}(C_i)=\text{SphereVol}(M),\\
			&\lim_{i\to\infty}||\mathrm{Jac}(\mathrm{Bar}|_{\mathrm{spt}(C_i)})-1||_{L^\infty(\Omega_i)}=0.
		\end{split}
	\end{equation}
	Note that each $\Omega_i$ may not be an integral current since we have no control on its boundary. Instead, we consider, for $r>0$,
	\[
		\Omega_{i,r}:=\text{the }r\text{-sublevel set of the smoothed out distance function from }\Omega_i\text{ in }\mathbb{S}^+/\Gamma\subset S^\infty/\lambda_\Gamma(\Gamma).
	\]

	Using \eqref{eqn:property of omegas}, the coarea formula, and the slicing theorem, there exists $r^{(i)}\in (0,1)$ such that for each $i$, $D_i:=C_i\llcorner\Omega_{i,r^{(i)}}$ is an integral current, and $\text{spt}(D_i)$ is a compact piecewise smooth embedded submanifold of $S^\infty/\lambda_\Gamma(\Gamma)$ satisfying:
	\begin{itemize}
		\item the boundary of $\text{spt}(D_i)$ is piecewise smooth (since we used the smoothed out distance function to construct $\Omega_{i,r^{(i)}}$), and we have
		\begin{equation}
			\lim_{i\to\infty}\mathbf{M}(\partial D_i)=0.
		\end{equation}
		\item after taking a subsequence, $D_i$ still converges to $C_\infty=(X_\infty,d_\infty,S_\infty)$ in the intrinsic flat topology. This implies that there exist a Banach space $(\mathbf{Z}',d)$ and isometric embeddings
		\begin{align*}
			\text{spt}(D_i)\hookrightarrow\mathbf{Z}',\quad \text{spt}(S_\infty)\hookrightarrow\mathbf{Z}',
		\end{align*}
		such that $D_i$ converges to $S_\infty$ in the flat topology inside $\mathbf{Z}'$. From now on, we identify $\text{spt}(D_i)$ and $\text{spt}(S_\infty)$ with their images in $\mathbf{Z}'$.
	\end{itemize}
	
	Let $N_i:=\text{spt}(D_i)$ in $\mathbf{Z}'$. By construction, it follows that
	\begin{itemize}
		\item $N_i$ endowed with the intrinsic metric induced by the metric $d$ in $\mathbf{Z}'$ is a compact oriented Riemannian manifold $(N_i,h_i)$ with a piecewise smooth Riemannian metric $h_i$, possibly with nonempty piecewise smooth boundary.
		\item $\lim_{i\to\infty}\mathrm{Area}(\partial N_i,h_i)=0$.
	\end{itemize}

	In addition, we verify the following (cf. \cite[Assumption 1.3]{song2025entropy}):
	\begin{enumerate}[label=(\alph*)]
		\item the maps $\text{Bar}|_{N_i}:(N_i,d|_{N_i})\to (M,g_m)$ are $C^1$ on the smooth part of $N_i$ and $\lambda$-Lipschitz for some $\lambda>0$ independent of $i$, such that $\text{Bar}_{\sharp}(D_i)$ converges to $[\![1_M]\!]$ in the flat topology inside $(M,g_m)$.
		\item $h_i$ is smooth in $\Omega_i$ and the following holds:
		\begin{align*}
			\lim_{i\to\infty}\mathrm{Vol}(N_i\setminus\Omega_i,h_i)=0,\quad \lim_{i\to\infty}\mathrm{Vol}(\Omega_i,h_i)=\mathrm{Vol}(M,g_m).
		\end{align*}
		\item we have
		\begin{align*}
			\lim_{i\to\infty}\Big|\Big| \big|\text{Bar}|_{N_i}^*(g_m)-h_i\big|_{h_i}\Big|\Big|_{L^\infty(\Omega_i)}=0.
		\end{align*}
	\end{enumerate}

	\textbf{Item (a):} By \eqref{eqn:property of omegas} and \eqref{eqn:pushforward of barycenter map}, it follows that $\text{Bar}_{\sharp}(D_i)$ converges in the flat topology to $[\![1_M]\!]$ inside $(M,g_m)$.
	The length estimate \eqref{eqn:length_est} guarantees that a Lipschitz bound holds uniformly in a neighborhood of $\Omega_i$ as the following: for any $\epsilon>0$, there exists $r_\epsilon>0$ such that if $i$ is sufficiently large, then for $f\in\Omega_i$ and $f'\in \mathbb{S}^+/\lambda_\Gamma(\Gamma)$ joined to $f$ by a piecewise geodesic curve $\alpha\subset\mathbb{S}^+/\lambda_\Gamma(\Gamma)$ of length less than $r_\epsilon$, we have
	\begin{equation}\label{eqn:uniform lipschitz}
		\mathrm{length}_{g_m}(\text{Bar}(\alpha))\le (1+\epsilon)\mathrm{length}(\alpha).
	\end{equation}
	As a result, we observe the following: for any $\tilde{r}\in (0,1)$, the restriction of $\text{Bar}$ to $\Omega_{i,\tilde{r}}$ is $\lambda$-Lipschitz for some $\lambda>0$ independent of $i$. In particular, this implies that $\text{Bar}|_{D_i}$ is $\lambda$-Lipschitz for sufficiently large $i$. 

	\textbf{Item (b):} It follows directly from \eqref{eqn:property of omegas}.

	\textbf{Item (c):} It follows from the fact that the Jacobian of $\text{Bar}$ is uniformly close to $1$ in $\Omega_i$ and the estimates \eqref{eqn:differential_est} and \eqref{eqn:length_est}.

	Then by using \cite[Proposition 1.4]{song2025entropy}, there is a limit map $\text{Bar}_\infty:\text{spt}(S_\infty)\to M$ which is an isometry for the intrinsic metrics induced on $\text{spt}(S_\infty)$ and $M$ by $d$ and $g_m$, respectively. Moreover, we have
	\begin{align*}
		(\text{Bar}_\infty)_\sharp(S_\infty)=[\![1_M]\!].
	\end{align*}
	Therefore, we conclude that $C_\infty$ is intrinsically isomorphic to $(M,g_m)$.
	
\end{proof}

\section{Stability for products of negatively curved symmetric spaces}\label{sec:stability of the product}
In this section, we will prove the stability for the product of negatively curved symmetric spaces. The main idea is the following: by using the intrinsic isomorphism established in Theorem \ref{thm:uniqueness}, we obtain the two sequences of subsets $\Omega_i\subset A_i\subset M$ such that the sequence of minimizing Riemannian metrics $g_i$ on $\Omega_i$ becomes close to the model metric $g_m$, and $(A_i,g_i|_{A_i})$ admits the limit map from $(M,g_m)$ into its limit space, which is $1$-Lipschitz and bi-Lipschitz. By adding appropriate `short-cuts' to $\Omega_i$ in $A_i$, we construct another sequence of subsets and show the convergence to $(M,g_m)$ in the measured Gromov-Hausdorff topology.

\begin{remark}\label{remark:notation}
	Throughout this section, for an open subset $U\subset M$ and a Riemannian metric $g$ on $M$, the notation $d_{g|_{U}}$ denotes the intrinsic distance on $U$ induced by $g$, i.e.,
	\[
		d_{g|_{U}}(x,y)=\inf_{\gamma}\mathrm{Length}(\gamma,g)
	\]
	where the infimum is taken over all rectifiable curves $\gamma$ connecting $x$ and $y$ that are contained in $U$. This should not be confused with the restriction $(d_g)|_U$ of the ambient distance function $d_g$ to $U$.
\end{remark}

\begin{theorem}\label{thm:stability of the product}
	Let $(M,g_0)$ be a closed oriented $n$-manifold defined in Theorem \ref{thm:spherical volume}. Suppose that $\{g_i\}$ is a sequence of Riemannian metrics on $M$ with $\mathrm{Vol}(M,g_i)=\mathrm{Vol}(M,g_m)$ satisfying
	\begin{equation*}
		\lim_{i\to\infty}h(g_i)=h(g_m),
	\end{equation*}
	where $g_m=\frac{h(g_{\mathrm{min}})^2}{4n}g_{\mathrm{min}}$ is the normalized metric of $g_{\mathrm{min}}$ defined in Theorem \ref{thm:spherical volume}. Then there exists a sequence of smooth subsets $Z_i\subset M$ with
	\begin{equation*}
		\lim_{i\to\infty}\mathrm{Vol}(Z_i,g_i)=0
	\end{equation*} such that $(M\setminus Z_i,d_{g_i|_{M\setminus Z_i}},d\mathrm{vol}_{g_i})$ converges to $(M,d_{g_m},d\mathrm{vol}_{g_m})$ in the measured Gromov-Hausdorff topology.
\end{theorem}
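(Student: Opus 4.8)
The plan is to reduce Theorem~\ref{thm:stability of the product} to the setting of Section~\ref{sec:uniqueness of the spherical plateau solution} by producing, out of the metrics $g_i$, a minimizing sequence in $\mathscr{H}_M$ whose barycenter maps are moreover $C^0$-close to Riemannian isometries onto most of $(M,g_i)$, and then to run the short-cut surgery of \cite{Song:2023aa} with its rank-one-specific boundary estimate omitted. For each $i$ choose $s_i>h(g_i)$ with $s_i\to h(g_m)=2\sqrt n$ and let $\phi_i\colon M\to S^\infty/\lambda_\Gamma(\Gamma)$ be the Besson--Courtois--Gallot natural map associated to $(M,g_i)$ and $s_i$; this is an admissible smooth immersion, so $\phi_i\in\mathscr{H}_M$. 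The natural map satisfies the pointwise bound $\mathrm{Jac}\,\phi_i\le (s_i^2/4n)^{n/2}$ with respect to $g_i$, hence $\mathbf{M}(C_i)\le\mathrm{Vol}(M,\phi_i^{*}\mathbf{g}_{\mathrm{Hil}})\le (s_i^2/4n)^{n/2}\,\mathrm{Vol}(M,g_i)$ for $C_i:=(\phi_i)_{\sharp}[\![1_M]\!]$. Since $h(g_m)^2/4n=1$ and $\mathrm{Vol}(M,g_i)=\mathrm{Vol}(M,g_m)=\mathrm{SphereVol}(M)$ by Theorem~\ref{thm:spherical volume}, letting $s_i\downarrow h(g_i)$ gives $\mathbf{M}(C_i)\to\mathrm{SphereVol}(M)$, so $\{\phi_i\}$ is a minimizing sequence; moreover $\int_M\mathrm{Jac}\,\phi_i\,d\mathrm{vol}_{g_i}=\mathrm{Vol}(M,\phi_i^{*}\mathbf{g}_{\mathrm{Hil}})\to\mathrm{Vol}(M,g_i)$ while $\mathrm{Jac}\,\phi_i\le 1+o(1)$ pointwise, so $\mathrm{Jac}\,\phi_i\to 1$ in $L^1(M,g_i)$, and by the area formula $\phi_i$ wastes asymptotically no sheets.

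\textbf{Inherited structure and the approximate isometry.} After the polyhedral perturbation, Theorem~\ref{thm:uniqueness} applies: $C_i\to C_\infty$ in the intrinsic flat topology with $C_\infty$ intrinsically isomorphic to $(M,g_m)$, and its proof furnishes open sets $W_i\subset\mathrm{spt}(C_i)$ with $\mathbf{M}(C_i\llcorner W_i)\to\mathrm{SphereVol}(M)$ and $\|\mathrm{Jac}\,\mathrm{Bar}-1\|_{L^\infty(W_i)}\to 0$, good sublevel sets $D_i=C_i\llcorner W_{i,r^{(i)}}$ whose supports $N_i$ are piecewise smooth Riemannian manifolds with $\mathrm{Area}(\partial N_i)\to 0$, and the limit isometry $\mathrm{Bar}_\infty$. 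Put $A_i:=\phi_i^{-1}(N_i)\subset M$; by the area formula together with $\mathbf{M}(D_i)\to\mathrm{SphereVol}(M)=\mathrm{Vol}(M,g_i)$ one gets $\mathrm{Vol}(M\setminus A_i,g_i)\to 0$. Combining the $L^1$-convergence $\mathrm{Jac}\,\phi_i\to 1$ with the rigidity in the Jacobian estimate for the natural map---the analogue of Lemma~\ref{lemma:differential and length approx} with $\mathrm{Bar}$ replaced by $\phi_i$, proved by the same argument applied factorwise to the endomorphisms $H_i$ and reassembled in product coordinates---one extracts a set $\Omega_i\subset A_i$ with $\mathrm{Vol}(M\setminus\Omega_i,g_i)\to 0$ on which, after shrinking so that $\phi_i(\Omega_i)\subset W_i$, the differential $d\phi_i$ is an $o(1)$-approximate linear isometry $(T_xM,g_i)\to(T_{\phi_i(x)}\mathrm{spt}(C_i),\mathbf{g}_{\mathrm{Hil}})$ up to the scale $s_i/2\sqrt n\to 1$. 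Since $\mathrm{Bar}$ is an $o(1)$-approximate isometry on $W_i$ (Lemmas~\ref{lemma:jacobian bound} and \ref{lemma:differential and length approx}, in the normalisation under which $\mathrm{Jac}\,\mathrm{Bar}\le 1$), the composed maps $\Psi_i:=\mathrm{Bar}\circ\phi_i\colon A_i\to M$ are uniformly Lipschitz (from the operator bound on $\phi_i$ and the local length estimate behind \eqref{eqn:length_est}), are degree-one homotopy equivalences with $(\Psi_i)_{\sharp}[\![1_{A_i}]\!]\to[\![1_M]\!]$ in the flat topology, satisfy $\Psi_i^{*}g_m=(1+o(1))g_i$ in $C^0(\Omega_i)$ weighted by $g_i$, and push $d\mathrm{vol}_{g_i}$ onto $d\mathrm{vol}_{g_m}$ up to an $o(1)$ error; thus $\Omega_i\subset A_i\subset M$, $g_i|_{\Omega_i}$ is $C^0$-close to $\Psi_i^{*}g_m$, and $(A_i,g_i|_{A_i})$ carries the $1$-Lipschitz bi-Lipschitz limit map of $(M,g_m)$ into the intrinsic-flat limit of $D_i$.

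\textbf{Surgery.} The data $\Omega_i\subset A_i\subset M$, the metrics $g_i$, and the maps $\Psi_i$ now satisfy precisely the hypotheses under which the stability argument of \cite{Song:2023aa}---the short-cut construction together with the limit-map proposition already invoked in the proof of Theorem~\ref{thm:uniqueness}---applies. It produces a smooth subset $Z_i\subset M$ with $\mathrm{Vol}(Z_i,g_i)\le\mathrm{Vol}(M\setminus\Omega_i,g_i)+o(1)\to 0$, obtained by attaching to $\Omega_i$ a family of small-volume ``short-cuts'' inside $A_i$ so that minimizing $g_i$-paths in $M\setminus Z_i$ stay in the region where $\Psi_i$ is an approximate isometry apart from negligibly short excursions, such that $\Psi_i$ realizes the convergence $(M\setminus Z_i,d_{g_i|_{M\setminus Z_i}},d\mathrm{vol}_{g_i})\to(M,d_{g_m},d\mathrm{vol}_{g_m})$ in the measured Gromov-Hausdorff topology. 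The only place where \cite{Song:2023aa} additionally proves $\mathrm{Area}(\partial Z_i,g_i)\to 0$ uses the equidistribution of geodesic spheres in rank-one symmetric spaces, which has no counterpart in the higher-rank product setting---and in fact fails, by Theorem~\ref{thm:intro_counterexample}; we simply drop that conclusion, which is the source of the weaker stability.

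\textbf{Expected main obstacle.} The crux is the claim above that $L^1$-smallness of $1-\mathrm{Jac}\,\phi_i$ forces $d\phi_i$, and hence $\Psi_i$, to be a genuine $o(1)$-approximate isometry of $g_i$ on a set of asymptotically full $g_i$-volume. Knowing only that the Jacobian (a determinant) is near-maximal and that $|d\phi_i|$ is bounded does not suffice: one could then compress $g_i$ along one de~Rham factor and dilate it along another, keeping $\mathrm{Jac}\,\phi_i$ and $h(g_i)$ near their extremal values---exactly the slack exploited by the counterexample of Theorem~\ref{thm:intro_counterexample}. One therefore genuinely needs the sharp rigidity of the Besson--Courtois--Gallot inequality---control of the full spectral data $H_i\approx n_i^{-1}\mathrm{Id}$ on each factor and then of the differential through the identities relating $K_i$ to $H_i$ and the complex/quaternionic/Cayley structures---and transferring this rigidity from the barycenter map to the natural map $\phi_i$ in the product setting is the technical heart of the argument. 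A secondary technical point is that $\phi_i$ need not be injective, so the correspondence between $W_i\subset\mathrm{spt}(C_i)$ and $\Omega_i\subset M$ and all uses of the area formula must carry multiplicities, exactly as in \cite{Song:2023aa}.
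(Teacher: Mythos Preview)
Your outline matches the paper's approach: Proposition~\ref{proposition:stability of the product} packages the natural maps $\phi_i$, the composition $\varphi_i=\mathrm{Bar}\circ\phi_i$, and the sets $\Omega_i\subset A_i$ essentially as you describe, and the proof of Theorem~\ref{thm:stability of the product} then performs the short-cut surgery. Two corrections are worth making.

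First, your ``expected main obstacle'' is misplaced. Once $\sum_j|d\phi_i(e_j)|^2\le c_i^2/4$ (equation~\eqref{eqn:property of P_c}) and $\mathrm{Jac}\,\phi_i$ is near its maximum, elementary AM--GM rigidity forces all singular values of $d\phi_i$ near $c_i/(2\sqrt n)$; no product-structure analysis enters here, and this is exactly how the paper obtains \eqref{eqn:almost isometry of phi_i}. The product rigidity you invoke (spectral control of the $H_i$) lives entirely in the barycenter map and is already Lemma~\ref{lemma:differential and length approx}. The counterexample exploits not slack in $d\phi_i$ but the fact that the limit map $\Psi\colon(M,g_m)\to(\mathrm{spt}\,S_\infty,d_\infty)$ is only $1$-Lipschitz, not an isometry.

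Second, your surgery step misattributes the key construction. In \cite{Song:2023aa} there is no short-cut construction to cite: there the equidistribution argument upgrades $\Psi$ to an isometry (Remark~\ref{remark:equidistribution}), after which one simply sets $Z_i=M\setminus A_i$, and the area bound on $\partial A_i$ is already built into the coarea step. Here $\Psi$ may be strictly contracting, so $(A_i,d_{g_i|_{A_i}})$ Gromov--Hausdorff converges only to $(\mathrm{spt}\,S_\infty,d_\infty)$, not to $(M,g_m)$, and a genuinely new ingredient is required. The paper supplies it as an explicit finite-graph construction: choose a $\delta_i$-net $S_i\subset\Omega_i$, and whenever $d_{g_m}(\varphi_i(x),\varphi_i(y))<\varepsilon_i$ join $x,y\in S_i$ by an edge in $A_i$ whose $g_i$-length is pinched near $d_{g_m}(\varphi_i(x),\varphi_i(y))$; such an edge exists precisely because $\Psi$ is $1$-Lipschitz and $f_i=\Psi\circ\varphi_i|_{A_i}$ is an $\varepsilon$-isometry, which together give $d_{g_i|_{A_i}}(x,y)\le d_{g_m}(\varphi_i(x),\varphi_i(y))+\delta_i$. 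Setting $Z_i=M\setminus(\Omega_i\cup\tilde G_i)$ then yields the measured Gromov--Hausdorff convergence to $(M,g_m)$, and it is this graph step, not anything borrowed from \cite{Song:2023aa}, that sacrifices control on $\mathrm{Area}(\partial Z_i)$.
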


To prove the theorem, we state the following proposition which extends \cite[Theorem 3.2]{song2025entropy} to the product case. 

\begin{proposition}\label{proposition:stability of the product}
	Let $(M,g_0)$ be a closed oriented $n$-manifold defined in Theorem \ref{thm:spherical volume}. Suppose that $\{g_i\}$ is a sequence of Riemannian metrics on $M$ with $\mathrm{Vol}(M,g_i)=\mathrm{Vol}(M,g_m)$ satisfying
	\begin{equation*}
		\lim_{i\to\infty}h(g_i)=h(g_m),
	\end{equation*}
	where $g_m=\frac{h(g_{\mathrm{min}})^2}{4n}g_{\mathrm{min}}$ is the normalized metric of $g_{\mathrm{min}}$ defined in Theorem \ref{thm:spherical volume}. Then there are smooth open subsets $\Omega_i\subset A_i\subset M$ and a sequence of maps $\varphi_i:(M,g_i)\to (M,g_m)$ such that the following holds after taking a subsequence:
	\begin{enumerate}[label=(\alph*)]
		\item $\displaystyle\lim_{i\to\infty}\mathrm{Vol}(\Omega_i,g_i)=\lim_{i\to\infty}\mathrm{Vol}(A_i,g_i)=\mathrm{Vol}(M,g_m)$,
		\item $\displaystyle\lim_{i\to\infty}\Big|\Big|\big|\varphi_i^* g_m - g_i\big|_{g_i}\Big|\Big|_{L^\infty(\Omega_i)}=0$,
		\item $(A_i,g_i|_{A_i})$ converges in the intrinsic flat topology to an integral current space 
		\begin{align*}
			C_\infty=(X_\infty,d_\infty,S_\infty),
		\end{align*}
		\item there is a bi-Lipschitz bijection $\Psi:(M,g_m)\to (\mathrm{spt}\,S_\infty,d_\infty)$ which is $1$-Lipschitz and $\Psi_\sharp([\![1_M]\!])=S_\infty$. (In fact, $\Psi=\varphi_\infty^{-1}$ where $\varphi_\infty$ is the limit map of $\varphi_i$.)
		\item Letting $f_i:=\Psi\circ\varphi_i$, we have the following property: for any $\varepsilon>0$, for all $i$ sufficiently large, the map $f_i:M\to \mathrm{spt}\,S_\infty$ is a homotopy equivalence such that the restriction $f_i|_{A_i}$ is an $\varepsilon$-isometry. This implies that $(A_i,g_i|_{A_i})$ converges to $(\mathrm{spt}\,S_\infty,d_\infty)$ in the Gromov-Hausdorff sense.
	\end{enumerate}
	\begin{figure}[ht!]
		\centering
		\begin{tikzcd}
			(A_i,d_{g_i|_{A_i}}) \arrow[r, "\varphi_i"] \arrow[rd, "f_i:\varepsilon\text{-isometry}"'] & (M,d_{g_m}) \arrow[d, "\Psi:1\text{-Lipschitz}"] \\
			& (\mathrm{spt}\,S_\infty,d_\infty)
		\end{tikzcd}
		\caption{The maps $\varphi_i$, $f_i$, and $\Psi$}
	\end{figure}
\end{proposition}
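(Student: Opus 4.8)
The plan is to mirror the proof of \cite[Theorem 3.2]{Song:2023aa}, substituting the product barycenter map of Lemmas \ref{lemma:jacobian bound} and \ref{lemma:differential and length approx} and the product uniqueness statement Theorem \ref{thm:uniqueness} for their rank-one analogues, and re-running the relevant part of the proof of Theorem \ref{thm:uniqueness} to manufacture the subsets.

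\textbf{Step 1: from the metrics $g_i$ to a minimizing sequence of maps.} Set $\hat g_i:=\tfrac{h(g_i)^2}{4n}g_i$; then $h(\hat g_i)=h(g_m)$ for all $i$, and since $\mathrm{Vol}(M,g_i)=\mathrm{Vol}(M,g_m)$ and the conformal factors $\tfrac{h(g_i)^2}{4n}$ tend to $1$, Theorem \ref{thm:spherical volume} gives $\mathrm{Vol}(M,\hat g_i)\to\mathrm{Vol}(M,g_m)=\mathrm{SphereVol}(M)$. For each $i$ let $\phi_i\in\mathscr{H}_M$ be the Besson--Courtois--Gallot natural map built from the Patterson--Sullivan measure of $(\tilde M,\hat g_i)$ with spectral parameter $s_i:=h(g_m)+\tfrac1i$. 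Its pointwise structure is $\phi_i^*\mathbf{g}_{\mathrm{Hil}}=\tfrac{s_i^2}{4}(\hat H_i-m_i\otimes m_i)$ with $\hat H_i\ge 0$, $\mathrm{tr}_{\hat g_i}\hat H_i=1$, and $m_i$ the barycenter $1$-form; hence $\phi_i^*\mathbf{g}_{\mathrm{Hil}}\le\tfrac{s_i^2}{4}\hat g_i$ pointwise and, by arithmetic--geometric mean, $|\mathrm{Jac}\,\phi_i|\le(s_i^2/4n)^{n/2}$. Integrating, $\mathrm{SphereVol}(M)\le\mathrm{Vol}(M,\phi_i^*\mathbf{g}_{\mathrm{Hil}})\le(s_i^2/4n)^{n/2}\mathrm{Vol}(M,\hat g_i)\to\mathrm{SphereVol}(M)$, so $\{\phi_i\}$ is minimizing and $\mathrm{Jac}\,\phi_i\to1$ in $L^1(M,\hat g_i)$. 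By the quantitative equality analysis of the determinant / arithmetic--geometric estimate (as in \cite[Proposition B.5]{Besson:1995aa}) there is $\Omega_i^{(1)}\subset M$ with $\mathrm{Vol}(M\setminus\Omega_i^{(1)},g_i)\to 0$ on which $\hat H_i\to\tfrac1n\mathrm{Id}$ and $m_i\to 0$, whence $\phi_i^*\mathbf{g}_{\mathrm{Hil}}\to g_i$ in $C^0(\Omega_i^{(1)})$, while $\phi_i$ is uniformly Lipschitz from $(M,g_i)$ on all of $M$. Finally perform the standard perturbations---finite support and polyhedral approximation, cf. \cite[Lemma 1.6]{Song:2023aa}---so that, with $C_i:=(\phi_i)_\sharp[\![1_M]\!]$, one has $\mathrm{spt}(C_i)\subset\mathbb{S}^+/\lambda_\Gamma(\Gamma)$ a finite union of embedded totally geodesic $n$-simplices, without affecting the above.

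\textbf{Step 2: limit space and re-run of the uniqueness argument.} By Wenger's compactness theorem a subsequence of $C_i$ converges in the intrinsic flat topology to a spherical Plateau solution $C_\infty=(X_\infty,d_\infty,S_\infty)$, which by Theorem \ref{thm:uniqueness} is intrinsically isomorphic to $(M,g_m)$. Now repeat the extraction step in the proof of Theorem \ref{thm:uniqueness}: from $\mathrm{Bar}_\sharp C_i=[\![1_M]\!]$, $|\mathrm{Jac}(\mathrm{Bar}|_{\mathrm{spt}C_i})|\le 1$ with respect to $g_m$ (Lemma \ref{lemma:jacobian bound}), and $\mathbf{M}(C_i)\to\mathrm{Vol}(M,g_m)$, one obtains open $\widetilde\Omega_i\subset\mathrm{spt}(C_i)$ with $\mathbf{M}(C_i\llcorner\widetilde\Omega_i)\to\mathrm{Vol}(M,g_m)$ and $\mathrm{Jac}(\mathrm{Bar})\to 1$ uniformly on $\widetilde\Omega_i$; slicing (coarea) the smoothed distance from $\widetilde\Omega_i$ gives radii $r^{(i)}$ for which $D_i:=C_i\llcorner\widetilde\Omega_{i,r^{(i)}}$ is an integral current with $\mathbf{M}(\partial D_i)\to 0$, $D_i\to C_\infty$, and $N_i:=\mathrm{spt}(D_i)$ with its induced intrinsic metric is a piecewise-smooth Riemannian manifold with $(n-1)$-area of boundary tending to $0$; then \cite[Proposition 1.4]{Song:2023aa} yields the limit isometry $\mathrm{Bar}_\infty:(\mathrm{spt}\,S_\infty,L_{d_\infty})\to(M,g_m)$ with $(\mathrm{Bar}_\infty)_\sharp S_\infty=[\![1_M]\!]$.

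\textbf{Step 3: the subsets and maps.} Put $\varphi_i:=\mathrm{Bar}\circ\phi_i:M\to M$ (well defined since $\phi_i(M)\subset\mathbb{S}^+/\lambda_\Gamma(\Gamma)$, and a homotopy equivalence because $\phi_i$ is admissible and $\mathrm{Bar}$ is $\Gamma$-equivariant); it is uniformly Lipschitz on $A_i$, being the composition of the $\tfrac{s_i}{2}$-Lipschitz $\phi_i$ with $\mathrm{Bar}$, which is uniformly Lipschitz on the tube $N_i$ by the length estimate \eqref{eqn:length_est}. Let $\Omega_i$ be an open smooth subset of $\Omega_i^{(1)}\cap\phi_i^{-1}(\widetilde\Omega_i)$ of almost full measure, thinned so that on it $d\phi_i$ and $d\mathrm{Bar}$ restricted to $T\,\mathrm{spt}(C_i)$ are each $o(1)$-close to linear isometries (using Step 1 and Lemma \ref{lemma:differential and length approx}); and let $A_i:=\phi_i^{-1}(N_i)$, smoothed. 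On $\Omega_i$ we get $\varphi_i^*g_m=\phi_i^*(\mathrm{Bar}^*g_m)\to g_i$ in $C^0$, which is (b); together with $|\mathrm{Jac}\,\phi_i|\to 1$ there, $\mathbf{M}(C_i\llcorner\widetilde\Omega_i)\to\mathrm{Vol}(M,g_m)$, $\mathbf{M}(C_i)-\mathbf{M}(D_i)\to0$, and $\mathrm{Vol}(M\setminus\Omega_i^{(1)},g_i)\to 0$, this gives $\mathrm{Vol}(\Omega_i,g_i)\to\mathrm{Vol}(A_i,g_i)\to\mathrm{Vol}(M,g_m)$, which is (a). For (c)--(e) one verifies that $(A_i,g_i|_{A_i},\varphi_i|_{A_i},\Omega_i)$ satisfies the analogue of \cite[Assumption 1.3]{Song:2023aa}---$\varphi_i|_{A_i}$ uniformly Lipschitz with $(\varphi_i)_\sharp[\![1_{A_i}]\!]=\mathrm{Bar}_\sharp D_i\to[\![1_M]\!]$ in the flat topology, $g_i|_{A_i}$ smooth on $\Omega_i$ with $\mathrm{Vol}(A_i\setminus\Omega_i,g_i)\to 0$, and $\varphi_i^*g_m-g_i\to 0$ on $\Omega_i$---so that \cite[Proposition 1.4]{Song:2023aa} produces a limit map $\varphi_\infty=\mathrm{Bar}_\infty$ whose inverse $\Psi:=\varphi_\infty^{-1}:(M,g_m)\to(\mathrm{spt}\,S_\infty,d_\infty)$ is an isometry for the intrinsic metrics, hence $1$-Lipschitz and bi-Lipschitz for $d_\infty$, with $\Psi_\sharp[\![1_M]\!]=S_\infty$; this gives (c) and (d). Setting $f_i:=\Psi\circ\varphi_i$ and unwinding the convergences $\varphi_i\to\varphi_\infty$ and $\phi_i(A_i)=N_i\to\mathrm{spt}\,S_\infty$ then shows $f_i|_{A_i}$ is an $\varepsilon$-isometry and $f_i$ a homotopy equivalence for large $i$, which is (e) and the commuting diagram.

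\textbf{Main obstacle.} The essential difficulty is (c)--(e): one must show that $(A_i,g_i|_{A_i})$---equipped with the \emph{intrinsic length} metric of $g_i|_{A_i}$, which on a thin set can be vastly larger than the ambient $g_i$-distance in $M$, and which is only controlled against the chord-type metric $\phi_i^*\mathbf{g}_{\mathrm{Hil}}$ on the bulk $\Omega_i$---still converges to $C_\infty$ and carries the $1$-Lipschitz bi-Lipschitz limit map. This is precisely what \cite[Proposition 1.4]{Song:2023aa} is designed to extract, but checking its hypotheses here requires: (i) the uniform Lipschitz bound on $\mathrm{Bar}$ from the \emph{product} length estimate \eqref{eqn:length_est}; (ii) ruling out asymptotic cancellation (so $\mathbf{M}(C_i)\to\mathrm{SphereVol}(M)$) and controlling the multiplicity of $\phi_i$ so that the area-formula volume bookkeeping is valid; and (iii) choosing $A_i$ large enough that $\mathrm{Vol}(A_i,g_i)\to\mathrm{Vol}(M,g_m)$ yet such that $\phi_i|_{A_i}$ remains a quasi-isometric homeomorphism onto $N_i$. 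No control on $\mathrm{Area}(\partial A_i,g_i)$ is available or attempted---by the counterexample of Theorem \ref{thm:intro_counterexample} it genuinely cannot be bounded---which is exactly why only the $n$-volume, and not the boundary $(n-1)$-volume, of the discarded set can be made to vanish.
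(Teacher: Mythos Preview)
Your overall plan—mirroring \cite[Theorem 3.2]{Song:2023aa} with the product barycenter map and Theorem~\ref{thm:uniqueness} in place of their rank-one analogues—is exactly right, and Steps~1--2 are essentially the paper's Steps~1--3 (the paper uses the discretized map $\mathcal{P}_c$ rather than the Patterson--Sullivan BCG map, but this is inessential). The real gap is in your construction of $A_i$ and the justification of (e).

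You set $A_i:=\phi_i^{-1}(N_i)$ and then assert that ``unwinding the convergences'' gives the $\varepsilon$-isometry property. But intrinsic flat convergence of $(A_i,g_i|_{A_i})$ to $C_\infty$ does \emph{not} by itself yield Gromov--Hausdorff convergence: thin spikes in $A_i$ can have negligible mass yet sit far from $\mathrm{spt}\,S_\infty$ in any common embedding. What is missing is the \emph{noncollapsing} property
\[
\liminf_{i\to\infty}\mathrm{Vol}\big(B_{g_i|_{A_i}}(p_i,s)\big)>0\quad\text{for every sequence }p_i\in A_i\text{ and every }s>0,
\]
which upgrades flat convergence to Hausdorff convergence inside the ambient Banach space and hence gives the $\varepsilon$-isometry. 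The paper obtains this by a two-pass construction (its Step~4): first run your Steps~2--3 on $N_i=\Omega_{i,r^{(i)}}$ to get a preliminary limit $\hat C_\infty$; then extract finite sets $\Sigma_i\subset N_i$ that Hausdorff-converge to $\mathrm{spt}\,\hat S_\infty$ inside the embedding space; and finally \emph{redefine} $A_i$ as smoothed $t_i$-neighborhoods of $\Sigma_i$ in $(M,g_i)$, with $t_i$ chosen so that the noncollapsing holds. Your preimage definition $A_i=\phi_i^{-1}(N_i)$ does not furnish this control, since $\phi_i$ is only known to be uniformly Lipschitz and approximately isometric on $\Omega_i$, not a quasi-isometric homeomorphism on the tube; nothing prevents $\phi_i^{-1}(N_i)$ from containing low-volume splines where $g_i|_{A_i}$-distances blow up.

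One further correction: your closing claim that ``no control on $\mathrm{Area}(\partial A_i,g_i)$ is available or attempted'' is inaccurate. In the paper's construction one does get $\mathrm{Area}(\partial A_i,g_i)\to 0$ (via coarea in the choice of $t_i$); the counterexample of Theorem~\ref{thm:intro_counterexample} concerns the \emph{later} set $Z_i=M\setminus\tilde\Omega_i$ built in the proof of Theorem~\ref{thm:stability of the product}, where the added short-cut graph $\tilde G_i$ is what destroys the boundary-area bound.
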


\begin{remark}
	One may interpret the regions $\Omega_i$ and $A_i$ in Proposition \ref{proposition:stability of the product} as the following:
	\begin{itemize}
		\item The Jacobian and distance estimates of the barycenter map allow us to find a sequence of regions $\Omega_i$ where the Riemannian metric $g_i$ is close to $g_m$ and the volume of $\Omega_i$ converges to the volume of the whole manifold. However, the length metric induced by $g_i|_{\Omega_i}$ may be strictly larger than the one induced by $g_m$ on $M$ at some points since the choice of paths in $\Omega_i$ can be too restrictive.
		\item The region $A_i$ is constructed so that for any two points $p,q$ in $A_i$, a path in $(A_i,g_i|_{A_i})$ exists whose length is no more than $d_{g_m}(\varphi_i(p),\varphi_i(q))$ plus some small error $\epsilon$ with $\epsilon\to 0$ as $i\to\infty$. This procedure can be done by enlarging the region $\Omega_i$ carefully. Moreover, utilizing the compactness result in the intrinsic flat topology, the issue of splines in the limit space $(\mathrm{spt}\,S_\infty,d_\infty)$ can be dealt with. However, the (inverse) limit map $\Psi:(M,g_m)\to (\mathrm{spt}\,S_\infty,d_\infty)$ is not necessarily an isometry since $\Psi^* d_\infty$ may still be strictly smaller than $d_{g_m}$ at some points. 
	\end{itemize}
\end{remark}
\begin{remark}\label{remark:equidistribution}
	In \cite[Theorem 3.5]{song2025entropy}, it is shown that the map $\Psi$ is actually an isometry, hence, the desired stability is established by setting the removed subsets $Z_i:=M_i\setminus A_i$. The main idea is as follows: suppose that there is a pair of points $p,q\in M$ whose distance is strictly larger than the distance between $\Psi(p)$ and $\Psi(q)$ in the limit space. Then from the equidistribution property of geodesic spheres in negatively curved symmetric spaces, one can prove that the volume of geodesic spheres in the limit space grows significantly faster, so that the volume entropy $h(g_i)$ cannot converge to the minimal entropy; this leads to a contradiction.
	
	On the other hand, the equidistribution property of geodesic spheres does not hold in the product case. Therefore, we construct a different sequence of subsets by adding `short-cuts' to $\Omega_i$ in $A_i$ to establish the stability result. In doing so, the $(n-1)$-volume of the boundary of the subsets cannot be controlled in general, as shown by the counterexample in Section \ref{sec:counterexample}.
\end{remark}

\begin{proof}[Proof of Proposition \ref{proposition:stability of the product}]
	We only sketch the idea of the proof for completeness as the technical details can be found in the proof \cite[Theorem 3.2]{song2025entropy}.\\
	
	\textbf{Step 1. Natural maps connecting the spherical volume and the minimal entropy}\\
	
	We recall the following natural map that connects the spherical Plateau solution and the entropy minimizing sequence: let $g$ be a Riemannian metric on $M$ and $\tilde{M}$ be the universal cover of $M$. Denote a Borel fundamental domain in $\tilde{M}$ for the action of $\Gamma=\pi_1(M)$ and let $\gamma.D_M$ be its image by an element $\gamma\in\Gamma$. Define for $c>h(g)$ the map $\mathcal{P}_c:\tilde{M}\to S^\infty$ by
	\begin{align*}
		\mathcal{P}_c(x):&\,\Gamma\to\mathbb{R}\\
		&\gamma\mapsto\frac{1}{||e^{-\frac{c}{2}\text{dist}_g(x,\cdot)}||_{L^2(\tilde{M},g)}}\left(\int_{\gamma.D_M}e^{-c\, \text{dist}_g(x,u)}\,d\mu_g(u)\right)^{1/2}.
	\end{align*}
	Note that $\mathcal{P}_c$ is a $\Gamma$-equivariant Lipschitz map and for almost all $x\in\tilde{M}$, it satisfies that
	\begin{equation}\label{eqn:property of P_c}
		\sum_{j=1}^n |d_x\mathcal{P}_c(e_j)|^2\le \frac{c^2}{4},
	\end{equation}
	where $\{e_j\}$ is an orthonormal basis of $T_x\tilde{M}$ (see \cite[Lemma 3.1]{song2025entropy}).

	Utilizing those natural maps, we obtain a sequence of maps $\phi_i:(M,g_i)\to \mathbb{S}^+/\lambda_\Gamma(\Gamma)$ which is a minimizing sequence in $\mathscr{H}_M$ (achieving the spherical volume) as the following: choose a sequence of positive numbers $\{c_i\}$ with $c_i>h(g_i)$ and $\lim_{i\to\infty}c_i=h(g_m)$. Then the maps $\mathcal{P}_{c_i}:\tilde{M}\to S^\infty$ are well-defined and $\Gamma$-equivariant, so we also call the quotient map $\mathcal{P}_{c_i}:(M,g_i)\to S^\infty/\lambda_\Gamma(\Gamma)$. After a small perturbation, we obtain a sequence of smooth immersions $\phi_i:(M,g_i)\to\mathbb{S}^+/\lambda_\Gamma(\Gamma)$ which belongs to $\mathscr{H}_M$. It follows from \eqref{eqn:property of P_c} that
	\begin{equation}\label{eqn:jacobian of phi_i}
		|\text{Jac }\phi_i|\le \left(1+\frac{\nu_i}{n}\right)^{n/2}
	\end{equation}
	where the Jacobian is computed with respect to $g_i$, thus, $\mathbf{M}((\phi_i)_\sharp([\![1_M]\!]))$ converges to $\text{SphereVol}(M)=\mathrm{Vol}(M,g_m)$, i.e., $\phi_i\in \mathscr{H}_M$ is a minimizing sequence in the spherical Plateau problem.\\
	
	\textbf{Step 2. Finding a sequence of ``good'' open subsets $\Omega_i$} \\
	
	By the area formula and \eqref{eqn:jacobian of phi_i}, there are open subsets $\hat{\Omega}_i\subset M$ such that $|\text{Jac }\phi_i|\to 1$ on $\hat{\Omega}_i$ with
	\begin{equation}
		\lim_{i\to\infty}\mathrm{Vol}(\hat{\Omega}_i,g_i)=\mathrm{Vol}(M,g_m).
	\end{equation}
	Thus, by \eqref{eqn:property of P_c}, we have
	\begin{equation}\label{eqn:almost isometry of phi_i}
		\lim_{i\to\infty}\Big|\Big|\big|\phi_i^* \mathbf{g}_{\text{Hil}} - g_i\big|_{g_i}\Big|\Big|_{L^\infty(\hat{\Omega}_i)}=0
	\end{equation}
	where $\mathbf{g}_{\text{Hil}}$ is the Hilbert metric on $\mathbb{S}^+/\lambda_\Gamma(\Gamma)$. Now define $\varphi_i:=\text{Bar}\circ\phi_i$. Using \eqref{eqn:almost isometry of phi_i} and the Jacobian bound \eqref{eqn:Jac_est}, we can find open subsets $\Omega_i\subset M$ with
	\begin{equation}\label{eqn:volume of Omega_i}
		\lim_{i\to\infty}\mathrm{Vol}(\Omega_i,g_i)=\mathrm{Vol}(M,g_m),
	\end{equation}
	and
	\begin{equation}\label{eqn:almost isometry of varphi_i}
		\lim_{i\to\infty}\Big|\Big|\big|\varphi_i^* g_m - g_i\big|_{g_i}\Big|\Big|_{L^\infty(\Omega_i)}=0.
	\end{equation}
	As in the proof of Theorem \ref{thm:uniqueness}, we define the smoothed-out $r^{(i)}$-neighborhoods of $\Omega_i$ in $(M,g_i)$, say $\Omega_{i,r^{(i)}}$, satisfying that 
	\begin{itemize}
		\item the closure of $\Omega_{i,r^{(i)}}$ is a compact manifold with a smooth boundary,
		\item $\lim_{i\to\infty}\mathrm{Area}(\partial\Omega_{i,r^{(i)}},g_i)=0$, and
		\item the restriction $\varphi_i|_{\Omega_{i,r^{(i)}}}$ is uniformly Lipschitz.
	\end{itemize}
	
	Note that, by replacing $\Omega_i$ with $\Omega_i\cap A_i$ if necessary, we may always assume $\Omega_i\subset A_i$ without affecting any of the above properties.\\
	
	\textbf{Step 3. Constructing the limit map using the compactness theorem in the intrinsic flat topology}\\
	
	Let $(N_i,h_i):=(\Omega_{i,r^{(i)}},g_i|_{\Omega_{i,r^{(i)}}})$. We will use the Wenger's compactness theorem to obtain the limit map. Define, for each $i$,
	\begin{align*}
		\hat{d}_i:=\min\{d_{h_i},6\,\text{diam}(M,g_m)\},
	\end{align*}
	where $d_{h_i}$ is the distance function induced by $h_i$ on $N_i$. Let $D_i:=[\![1_{N_i}]\!]$. By the compactness theorem, the integral current spaces $D_i$ converge subsequentially to an integral current space
	\begin{align*}
		\hat{C}_\infty=(\hat{X}_\infty,\hat{d}_\infty,\hat{S}_\infty)
	\end{align*}
	in the intrinsic flat topology. This implies that there exist a Banach space $\hat{\mathbf{Z}}'$ and isometric embeddings
	\begin{align*}
		(N_i,\hat{d}_i)\hookrightarrow\hat{\mathbf{Z}}',\quad \text{spt }\hat{S}_\infty\hookrightarrow\hat{\mathbf{Z}}',
	\end{align*}
	such that $D_i$ converges to $\hat{S}_\infty$ in the flat topology inside $\hat{\mathbf{Z}}'$. Then by using \cite[Proposition 1.4]{song2025entropy}, there is a limit map
	\begin{align*}
		\varphi_{\infty}:(\text{spt }\hat{S}_\infty,\hat{d}_\infty)\to (M,g_m),
	\end{align*}
	which is Lipschitz, bijective and whose inverse $\hat{\Psi}:=\varphi_{\infty}^{-1}$ is $1$-Lipschitz with respect to the intrinsic metrics. Hence, $\hat{\Psi}$ is $1$-Lipschitz and bi-Lipschitz, and $\hat{\Psi}_\sharp([\![1_M]\!])=\hat{S}_\infty$. 

	In the above procedure, we remove the possibility of splines in the limit space $(\text{spt }\hat{S}_\infty,\hat{d}_\infty)$ by considering the metric $\hat{d}_i$ given an artificial upper bound. Once we get the limit space and the inverse of the limit map $\Psi$, it can be shown that the original integral current spaces $(N_i,d_{h_i},[\![1_{N_i}]\!])$ also converge subsequentially to $\hat{C}_\infty$ in the intrinsic flat topology. We omit the technical details.
	
	Still, note that $\hat{C}_\infty$ and $\hat{\Psi}$ are not the limit space and the limit map stated in the proposition as we will obtain them in the next step.\\
	
	\textbf{Step 4. Gromov-Hausdorff convergence and $\epsilon$-isometries}\\
	
	Now we construct the subsets $\{A_i\}$ stated in the proposition. Recall that by using the Banach space $\hat{\mathbf{Z}}$ and isometric embeddings, we regard $N_i$ and $\text{spt }\hat{S}_\infty$ as subsets of $\hat{\mathbf{Z}}$. Using the convergence in the flat topology, there exist finite subsets $\Sigma_i\subset N_i$ converging in the Hausdorff topology to $\text{spt }\hat{S}_\infty$ in $\hat{\mathbf{Z}}$. Then we can find a sequence of open subsets $A_i\subset M$ as the smoothed-out $t_i$-neighborhoods of $\Sigma_i$ in $(M,g_i)$ such that
	\begin{equation}\label{eqn:volume of A_i}
		\lim_{i\to\infty}\mathrm{Vol}(A_i,g_i)=\mathrm{Vol}(M,g_m),\quad \lim_{i\to\infty}\mathrm{Area}(\partial A_i,g_i)=0,
	\end{equation}
	and for any $s>0$ and any sequence of points $p_i\in A_i$, we have
	\begin{equation}\label{eqn:noncollapsing of A_i}
		\liminf_{i\to\infty}\mathrm{Vol}(B_{h_i|_{A_i}}(p_i,s),h_i)>0.
	\end{equation}
	Note that \eqref{eqn:noncollapsing of A_i} is the key additional property to ensure that the Gromov-Hausdorff convergence holds. 
	
	Now we repeat \textbf{Step 2} for $A_i$: as a result, it follows that $(A_i,g_i|_{A_i})$ converges subsequentially to an integral current space
	\begin{align*}
		C_\infty=(X_\infty,d_\infty,S_\infty),
	\end{align*}
	that is, there exist a Banach space $\mathbf{Z}$ and isometric embeddings
	\begin{align*}
		(A_i,d_{g_i|_{A_i}})\hookrightarrow\mathbf{Z},\quad \text{spt }S_\infty\hookrightarrow\mathbf{Z},
	\end{align*}
	such that $[\![1_{A_i}]\!]$ converges to $S_\infty$ in the flat topology inside $\mathbf{Z}$. Also, we have the limit map $\phi_\infty:(\text{spt }S_\infty,d_\infty)\to (M,g_m)$ of $\phi_i:(A_i,g_i|_{A_i})\to (M,g_m)$, whose inverse 
	\[
		\Psi:=\phi_\infty^{-1}:(M,g_m)\to (\text{spt }S_\infty,d_\infty)
	\] is $1$-Lipschitz and bi-Lipschitz. By using \eqref{eqn:noncollapsing of A_i}, one can show that $(A_i,g_i|_{A_i})$ converges to $(\text{spt }S_\infty,d_\infty)$ in the Hausdorff topology inside $\mathbf{Z}$.

	Finally, define $f_i:=\Psi\circ\varphi_i:M\to\text{spt }S_\infty$. It is straightforward to check that each $f_i$ is a homotopy equivalence and for any given $\varepsilon>0$, the restriction map
	\begin{align*}
		f_i|_{A_i}:(A_i,d_{g_i|_{A_i}})\to (\text{spt }S_\infty,d_\infty)
	\end{align*}
	is an $\varepsilon$-isometry for sufficiently large $i$. 

	Now we can verify all the properties stated in the proposition: (a) is obtained by \eqref{eqn:volume of Omega_i}, \eqref{eqn:almost isometry of varphi_i}, and \eqref{eqn:volume of A_i}, and (b) is obtained by \eqref{eqn:almost isometry of varphi_i}. The properties (c), (d), and (e) are obtained in \textbf{Step 4}.
\end{proof}

Now we turn to the proof of Theorem \ref{thm:stability of the product}.

\begin{proof}[Proof of Theorem \ref{thm:stability of the product}]
	We construct a sequence of finite graphs $G_i$ in $A_i$ using $\Omega_i, A_i,$ and $\varphi_i$ in Proposition \ref{proposition:stability of the product} such that $(G_i,d_{g_i|_{G_i}})$ converges to $(M,d_{g_m})$ in the Gromov-Hausdorff sense. Choose $\delta_i, \varepsilon_i>0$ with $\delta_i<\min\left\{\frac{\varepsilon_i}{4},\frac{\varepsilon_i^2}{6\text{diam}_{g_m}(M)}\right\}$ and $\varepsilon_i\to 0$ as $i\to\infty$ such that the map $f_i|_{A_i}$ is an $(\delta_i/2)$-isometry for every $i$ by taking a subsequence if necessary. Choose a finite set $S_i$ in $\Omega_i$ so that the sets $S_i, \varphi(S_i)$ are $\delta_i$-nets in $(\Omega_i,d_{g_i|_{\Omega_i}})$ and $(M,d_{g_m})$, respectively. Let $N_i$ be the number of points in $S_i$. Now we construct the following graph $G_i$ in $A_i$ whose vertices are the points in $S_i$: 
	\begin{itemize}
		\item Two points $x,y$ in $S_i$ are connected by an edge $e$ contained in $A_i$ if and only if
		\begin{align*}
			d_{g_m}(\varphi_i(x),\varphi_i(y))<\varepsilon_i,
		\end{align*} where the length of this edge satisfies
		\[
			\max\{0,d_{g_m}(\varphi_i(x),\varphi_i(y))-\frac{\varepsilon_i}{N_i}\}<L_{g_i}(e)<d_{g_m}(\varphi_i(x),\varphi_i(y))+\delta_i.
		\]
	\end{itemize}
	Note that such an edge can be found due to the following: since $f_i|_{A_i}$ is an $(\delta_i/2)$-isometry and $\Psi$ is $1$-Lipschitz, we have for $x,y\in S_i$ that
	\begin{align*}
		d_{g_m}(\varphi_i(x),\varphi_i(y))&\ge d_{\infty}(\Psi\circ\varphi_i(x),\Psi\circ\varphi_i(y))\\
		&\ge d_{g_i|_{A_i}}(x,y)-\frac{\delta_i}{2},
	\end{align*}
	which implies that there exists a path in $(A_i,g_i|_{A_i})$ joining $x$ and $y$ whose length is no more than $d_{g_m}(\varphi_i(x),\varphi_i(y))+\delta_i$. The lower bound of the edge length can easily be achieved. We may also choose so that any two edges in $G_i$ are disjoint.

	Now we verify that $(G_i,d_{g_i|_{G_i}})$ is an $(\varepsilon_i+\delta_i,\varepsilon_i)$-approximation of $(M,g_m)$. By definition, $S_i$ and $\varphi_i(S_i)$ are $(\varepsilon_i+\delta_i)$-net in $(G_i,d_{g_i|_{G_i}})$ and $(M,g_m)$, respectively.
	
	\textbf{Step 1.} We first show $d_{g_i|_{G_i}}(x,y)\le d_{g_m}(\varphi_i(x),\varphi_i(y))+\varepsilon_i$ for any $x,y\in S_i$. Fix any $x,y$ in $S_i$. Let $\gamma$ be a shortest path in $(M,g_m)$ connecting $x$ and $y$. Choose $\ell$ points $z_1,\ldots,z_\ell$, where $\ell+1\le 2L_{g_m}(\gamma)/\varepsilon_i$, dividing $\gamma$ into intervals of lengths no greater than $\varepsilon_i/2$. For every $j=1,\ldots, \ell$, there is a point $\varphi_i(x_j)\in S_i$ such that $d_{g_m}(\varphi_i(x_j),z_j)<\delta_i$. Set $x_0=x$, $x_{\ell+1}=y$, $z_0=\varphi_i(x)$, and $z_{\ell+1}=\varphi_i(y)$. Note that
	\[
		d_{g_m}(\varphi_i(x_j),\varphi_i(x_{j+1}))\le d_{g_m}(z_j,z_{j+1})+2\delta_i<\varepsilon_i
	\]
	for all $j=0,\ldots,\ell$ since $\delta_i<\varepsilon_i/4$, so $x_j$ and $x_{j+1}$ are connected by an edge in $G_i$. Therefore, we have
	\begin{align*}
		d_{g_i|_{G_i}}(x,y) &\le \sum_{j=0}^{\ell}d_{g_i|_{G_i}}(x_j,x_{j+1}) \\
		&\le \sum_{j=0}^{\ell}d_{g_m}(\varphi_i(x_j),\varphi_i(x_{j+1}))+\delta_i(\ell+1) \\
		&\le \sum_{j=0}^{\ell}d_{g_m}(z_j,z_{j+1})+3\delta_i(\ell+1) \\
		&= d_{g_m}(\varphi_i(x),\varphi_i(y))+3\delta_i(\ell+1) \\
		&\le d_{g_m}(\varphi_i(x),\varphi_i(y))+\delta_i\cdot \frac{6\mathrm{diam}_{g_m}(M)}{\varepsilon_i} \\
		&< d_{g_m}(\varphi_i(x),\varphi_i(y))+\varepsilon_i,
	\end{align*}

	\textbf{Step 2.} Now we want to show $d_{g_i|_{G_i}}(x,y)\ge d_{g_m}(\varphi_i(x),\varphi_i(y))-\varepsilon_i$ for any $x,y\in S_i$. Let $x_0=x, x_1,\ldots x_{\ell-1},x_\ell=y$ be the points in $S_i$ which form a shortest path connecting $x$ and $y$. Then we have
	\begin{align*}
		d_{g_i|_{G_i}}(x,y) &\ge \sum_{j=0}^{\ell-1}d_{g_i|_{G_i}}(x_j,x_{j+1}) \\
		&\ge \sum_{j=0}^{\ell-1}d_{g_m}(\varphi_i(x_j),\varphi_i(x_{j+1}))-\frac{\varepsilon_i}{N_i}(\ell-1) \\
		&\ge d_{g_m}(\varphi_i(x),\varphi_i(y))-\varepsilon_i.
	\end{align*}

	As a result, we have shown that $(G_i,g_i|_{G_i})$ converges to $(M,g_m)$ in the Gromov-Hausdorff sense. This concludes the proof by defining $\tilde{\Omega}_i:=\Omega_i\cup \tilde{G}_i$ where $\tilde{G}_i$ is a smooth open neighborhood of $G_i$ in $A_i$ so that $(\tilde{\Omega}_i,g_i|_{\tilde{\Omega}_i})$ converges to $(M,g_m)$ in the Gromov-Hausdorff sense. Note also that for $Z_i:=M\setminus\tilde{\Omega}_i$, we have
	\[
		\lim_{i\to\infty}\mathrm{Vol}(Z_i,g_i)=0.
	\]
	Let $\mu_i$ be the measure on $\tilde{\Omega}_i$ determined by the Riemannian metric $g_i|_{\tilde{\Omega}_i}$ and let $\mu$ be the measure on $M$ by $g_m$. To show the convergence in the sense of the measured Gromov-Hausdorff topology, it suffices to show that $(\varphi_i)_{\#}(\mu_i)$ converges to $\mu$ weakly. Recall the properties that
    \begin{equation*}
        ||\varphi_i^* g_m-g_i||_{L^\infty(\Omega_i)}\to 0\text{ as }i\to \infty,
    \end{equation*} and $\varphi_i$ is injective on $\Omega_i$ for large $i$. By using this fact, it is straightforward that $(\varphi_i)_{\#}(\mu_i)$ weakly converges to $\mu$. Indeed, for any open set $U$ in $M$, we have
    \begin{align*}
        \mu_i(\varphi_i^{-1}(U)) &= \mu_i(\varphi_i^{-1}(U)\cap\Omega_i)+\mu_i(\varphi_i^{-1}(U)\setminus\Omega_i)\\
        &= \mu(U\cap\varphi_i(\Omega_i))+o(1)\\
        &=\mu(U)+o(1)\text{ as }i\to \infty.
    \end{align*}
    The second line follows from the properties recalled above and that $\mu_i(\tilde{\Omega}_i\setminus\Omega_i)\to 0$, and the third line follows from the fact that $\mu(\varphi_i(\Omega_i))\to \mu(M)$.
\end{proof}

\section{A counterexample of the volume entropy stability conjecture}\label{sec:counterexample}
In this section, we construct a counterexample of the volume entropy stability statement analogous to \cite[Theorem 0.1]{song2025entropy}. For $n\ge 3$, let $\tilde{M}=\mathbb{H}^n\times\mathbb{H}^n$ equipped with the product metric $g_0=h_1\oplus h_2$. Let $M^{2n}=\tilde{M}/\Gamma_1\times\Gamma_2$ where $\Gamma_1$ and $\Gamma_2$ are cocompact lattices of $\mathrm{Isom}(\mathbb{H}^n)$.

We slightly abuse the notation that $g_0=h_1\oplus h_2$ stands for both the product metric on $\tilde{M}$ and the quotient metric on $M$. 

We choose a totally geodesic hypersurface $\tilde{S}$ in $\tilde{M}$ such that
\begin{itemize}
	\item $\tilde{S}=U\times C$ where $U$ is an open set and $C$ is a totally geodesic hypersurface in $\mathbb{H}^n$.
	\item $\tilde{S}$ is $(2n-1)$-volume-minimizing in $\tilde{M}$, i.e., any compactly supported variation of $\tilde{S}$ increases the $(2n-1)$-volume of $\tilde{S}$.
\end{itemize}
Let $S$ be the image of $\tilde{S}$ in $M$ under the quotient map. We slightly abuse the notation that $\tilde{S}$ stands for all the lifts of $S$ in $\tilde{M}$.

We define a sequence of smooth Riemannian metrics $\{g_i\}$ on $M$ as the following: for $0<\eta<1, \delta_i,\tau_i>0$ with $\delta_i,\tau_i\to 0$ as $i\to\infty$,
\begin{align}\label{eqn:counterexample}
	g_i=\left\{
		\begin{array}{ll}
			(\eta^2 h_1)\oplus h_2 & \text{ in }O_{\delta_i/2}(S),\\
			g_0 & \text{ in }M\setminus O_{\delta_i}(S),
		\end{array}
	\right.
\end{align} satisfying
\begin{align*}
	g_i(O_{\delta_i}(S))\le g_0(O_{\delta_i}(S)),\text{ and }\mathrm{Vol}_{g_i}(O_{\delta_i}(S))\le\mathrm{Vol}_{g_0}(O_{\delta_i}(S))(1+\tau_i),
\end{align*}
where $O_{\delta_i}(S)$ is the $\delta_i$-neighborhood of $S$ with respect to $g_0$.

One can show that $(M,g_i)$ converges to the integral current space $(M,d_\eta,[\![1_M]\!])$ in both the Gromov-Hausdorff and intrinsic flat senses, where the metric $d_\eta$ is defined as 
\[
	d_\eta(x_1,x_2):=\min\{\mathrm{dist}_{g_0}(x_1,x_2),\min\{\mathrm{dist}_{g_0}(x_1,y_1)+\mathrm{dist}_{(\eta^2 h_1)\oplus h_2|_{S}}(y_1,y_2)+\mathrm{dist}_{g_0}(x_2,y_2)\,:\, y_i\in S\}\}
\]
for any $x_1,x_2\in M$, and the current $[\![1_M]\!]$ is the one induced from the volume form of $g_0$. (cf. \cite[Lemmas 4.2,4.3]{Basilio:2018aa}). In the remaining part of this section, we show that this sequence of metrics serves as a counterexample.

\begin{theorem}
	Let $\{g_i\}$ be the sequence of Riemannian metrics defined in \eqref{eqn:counterexample} for $0<\eta<1$ sufficiently close to $1$. Then the following hold:
	\begin{enumerate}
		\item The volume entropy of $(M,g_i)$ converges to the minimum volume entropy of $M$ as $i\to\infty$.
		\item There does not exist a sequence of smooth subsets $Z_i\subset M$ with
		\begin{equation*}
			\lim_{i\to\infty}\mathrm{Vol}(Z_i,g_i)=\lim_{i\to\infty}\mathrm{Area}(\partial Z_i,g_i)=0
		\end{equation*} such that $(M\setminus Z_i,g_i)$ converges to $(M,g_0)$ in the measured Gromov-Hausdorff sense.
	\end{enumerate}
\end{theorem}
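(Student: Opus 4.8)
The plan is to establish the two claims separately, with claim (1) resting on a volume-entropy comparison that exploits the product structure, and claim (2) resting on the explicit identification of the intrinsic-flat and Gromov--Hausdorff limit as $(M,d_\eta,\llbracket 1_M\rrbracket)$.

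For claim (1), the key point is that the metrics $g_i$ agree with $g_0$ outside the shrinking neighborhood $O_{\delta_i}(S)$ and differ from it only by the factor $\eta$ on the first $\mathbb{H}^n$-factor inside $O_{\delta_i/2}(S)$; since $\mathrm{Vol}(M,g_i)\le \mathrm{Vol}(M,g_0)(1+\tau_i)$ with $\tau_i\to 0$, after an arbitrarily small rescaling we may assume the volume constraint is exactly met. I would then bound $h(g_i)$ from above by a cut-and-paste / length-comparison argument in the universal cover: any $g_i$-geodesic ball of radius $R$ is contained in a $g_0$-ball of radius $R(1+o(1))$ because $g_i\ge \eta\, g_0$ globally and the regions where $g_i\neq g_0$ are uniformly thin (width $\delta_i\to 0$), so the extra length a path can gain by routing through the ``cheap'' region $O_{\delta_i}(S)$ is bounded by a constant multiple of $\delta_i$ times the number of fundamental domains crossed, which is $O(R\delta_i)$. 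Hence $\mathrm{Vol}(B_{\tilde g_i}(o,R))\le \mathrm{Vol}(B_{\tilde g_0}(o,R(1+o(1))+C\delta_i))$ and $h(g_i)\le h(g_0)+o(1)$; combined with the Connell--Farb lower bound $h(g_i)\ge h(g_{\min})=h(g_0)$ (after the normalization identifying $g_0$ with the minimal metric up to scale, for the product of two copies of $\mathbb{H}^n$), this forces $h(g_i)\to h(g_0)$.

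For claim (2), suppose toward a contradiction that such subsets $Z_i$ exist with $\mathrm{Vol}(Z_i,g_i)\to 0$ and $\mathrm{Area}(\partial Z_i,g_i)\to 0$, and that $(M\setminus Z_i, d_{g_i|_{M\setminus Z_i}}, d\mathrm{vol}_{g_i})$ converges to $(M,d_{g_0},d\mathrm{vol}_{g_0})$ in the measured Gromov--Hausdorff topology. On the other hand, the paragraph preceding the theorem records (via \cite[Lemmas 4.2, 4.3]{Basilio:2018aa}) that $(M,g_i)$ converges in both the Gromov--Hausdorff and intrinsic flat senses to $(M,d_\eta,\llbracket 1_M\rrbracket)$, where $d_\eta\le d_{g_0}$ with strict inequality for suitable pairs of points straddling $S$ (the $\eta h_1\oplus h_2$ metric on $S$ genuinely shortens the first-factor directions). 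The contradiction comes from comparing these two limits: removing $Z_i$ with $\mathrm{Area}(\partial Z_i,g_i)\to 0$ cannot undo the distance-shortening, because a thin boundary $\partial Z_i$ cannot block all the short paths through $O_{\delta_i}(S)$ — quantitatively, by a coarea/Vitali argument one slides $Z_i$ within an $O(\sqrt{\mathrm{Vol}(Z_i,g_i)})$-tube to arrange $\mathrm{Area}(\partial Z_i \cap O_{\delta_i}(S))$ is still small, so most of the ``cheap'' slab $O_{\delta_i/2}(S)$ survives in $M\setminus Z_i$ and the short path $\mathrm{dist}_{g_0}(x,y_1)+\mathrm{dist}_{(\eta h_1)\oplus h_2|_S}(y_1,y_2)+\mathrm{dist}_{g_0}(y_2,y)$ is essentially available in $M\setminus Z_i$. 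Hence $\liminf d_{g_i|_{M\setminus Z_i}}(x_i,y_i)\le d_\eta(x,y)<d_{g_0}(x,y)$ for a fixed pair $x,y$ with $d_\eta(x,y)<d_{g_0}(x,y)$, contradicting GH-convergence to $(M,d_{g_0})$.

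The main obstacle is the technical heart of claim (2): making rigorous that a sequence of hypersurfaces $\partial Z_i$ with $(n-1)$-volume tending to zero cannot ``seal off'' the slab $O_{\delta_i/2}(S)$ and restore the original distances. The cleanest route is probably to argue at the level of the intrinsic flat / integral-current limit rather than pointwise: $(M\setminus Z_i, d_{g_i}, d\mathrm{vol}_{g_i})$ and $(M,g_i)$ have the same intrinsic flat limit whenever $\mathrm{Vol}(Z_i,g_i)\to 0$ and $\mathrm{Area}(\partial Z_i,g_i)\to 0$ (the currents $\llbracket 1_{M\setminus Z_i}\rrbracket$ and $\llbracket 1_M\rrbracket$ differ by $\llbracket 1_{Z_i}\rrbracket$ whose mass and boundary mass both vanish, so the intrinsic flat distance between them is controlled), so the assumed limit would have to be $(M,d_\eta,\llbracket 1_M\rrbracket)$; but measured GH convergence to $(M,d_{g_0},d\mathrm{vol}_{g_0})$ together with the fact that $(M,d_{g_0})$ and $(M,d_\eta)$ are non-isometric (they have different diameters, or more simply different distance between a fixed pair) is a contradiction, since measured GH limits are unique up to measure-preserving isometry. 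This ``same intrinsic flat limit'' lemma — essentially \cite[Theorem 3.5]{Sormani:2011aa}-type filling-volume estimates applied to $Z_i$ — is the step I expect to require the most care.
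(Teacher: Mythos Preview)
Your argument for (1) has a genuine gap. You claim that $B_{\tilde g_i}(o,R)\subset B_{\tilde g_0}(o,R(1+o(1)))$ because the ``cheap'' region is thin, but this is false: the slab $O_{\delta_i}(S)$ is thin only in the direction normal to $S$, while in the first-factor directions it extends indefinitely, so a path running along $S$ saves a fixed fraction $(1-\sqrt{\eta})$ of its first-factor length regardless of $\delta_i$. Indeed $d_{g_i}\to d_\eta$, and $d_\eta(o,x)<d_{g_0}(o,x)$ on a set of points with $r_1$ large and $r_2$ small, so $B_{\tilde g_i}(o,R)$ genuinely contains points at $\tilde g_0$-distance $>R+c$ for a fixed $c>0$. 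The paper's argument is essentially different: it first shows (via an elementary turning-angle lemma) that $d_\eta(o,x)=d_{g_0}(o,x)$ on the conical region $R_c=\{|\theta(x)-\pi/4|\le c\}$, and then uses that in the product $\mathbb H^n\times\mathbb H^n$ the volume growth of large balls concentrates on $R_c$ (since $\sinh^{n-1}r_1\sinh^{n-1}r_2$ is maximized for $r_1=r_2$); outside $R_c$ one has $r_1+r_2<\sqrt{2}\rho$, so that portion contributes only $e^{(\sqrt{2}(n-1)+o(1))\rho}$ and does not raise the entropy.

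For (2), your ``cleanest route'' via intrinsic flat limits has a subtle but fatal conflation: the currents $\llbracket 1_{M\setminus Z_i}\rrbracket$ and $\llbracket 1_M\rrbracket$ are close only when both are viewed in the \emph{same} ambient metric space $(M,d_{g_i})$, whereas the assumed measured GH convergence concerns $(M\setminus Z_i, d_{g_i|_{M\setminus Z_i}})$ with the \emph{intrinsic} metric, which can be strictly larger. So the uniqueness-of-limits argument does not go through. Your direct ``can't seal off'' idea is the right one and is what the paper does, but it needs a concrete mechanism: the paper slices a neighborhood of $S$ by a family of $2$-planes $P_x$ (spanned by the normal $\vec n$ and a first-factor direction $\nu$), uses Fubini with $\mathrm{Vol}(Z_i),\mathrm{Area}(\partial Z_i)\to 0$ to find $x_i$ with $\mathscr H^2(Z_i\cap P_{x_i}),\,\mathscr H^1(\partial Z_i\cap P_{x_i})\to 0$, and then builds explicit detoured paths in $(M\setminus Z_i)\cap P_{x_i}$ of length $\le L+o(1)$. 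The paper's concluding contradiction is also sharper than comparing a single distance: it produces two reflected pairs $(p_i,q_i)$ and $(\hat p_i,\hat q_i)$ whose almost-minimizing paths in $M\setminus Z_i$ share a common segment in $S$, so that in the GH limit two distinct minimizing geodesics share a segment, which is impossible in the smooth Riemannian manifold $(M,g_0)$.
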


Note that by simply normalizing the volume, one can obtain a counterexample stated in Theorem \ref{thm:intro_counterexample} from the introduction.
 
\subsection{The volume entropy converges to the minimum volume entropy}
Here, we prove the following proposition:
\begin{proposition}\label{proposition:entropy of the counterexample}
	For $0<\eta<1$ sufficiently close to $1$, let $\{g_i\}$ be the sequence of Riemannian metrics defined in \eqref{eqn:counterexample}. Then we have
	\[
		\lim_{i\to\infty}h(g_i)=h(g_0).
	\]
\end{proposition}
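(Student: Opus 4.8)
The plan is to prove $\liminf_{i\to\infty}h(g_i)\ge h(g_0)$ and $\limsup_{i\to\infty}h(g_i)\le h(g_0)$ separately. For the lower bound I would invoke the Connell--Farb minimal entropy rigidity (Theorem~\ref{thm:intro CF}): since here $n_1=n_2=n$ and $h_1=h_2=n-1$, a direct computation shows that $g_{\mathrm{min}}$ coincides with $g_0=h_1\oplus h_2$, so $g_0$ itself realizes the minimal volume entropy in its volume class and $h(g_0)=(n-1)\sqrt2$. Applying the scale-invariant form of the rigidity inequality, $h(g)^{2n}\,\mathrm{Vol}(M,g)\ge h(g_0)^{2n}\,\mathrm{Vol}(M,g_0)$, to each $g_i$ gives $h(g_i)\ge h(g_0)\bigl(\mathrm{Vol}(M,g_0)/\mathrm{Vol}(M,g_i)\bigr)^{1/2n}$. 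As $g_i$ differs from $g_0$ only on $O_{\delta_i}(S)$, whose $g_0$-volume tends to $0$ with $\delta_i$, the volume condition in \eqref{eqn:counterexample} forces $\mathrm{Vol}(M,g_i)\to\mathrm{Vol}(M,g_0)$, so the ratio tends to $1$ and $\liminf_i h(g_i)\ge h(g_0)$.

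For the upper bound I would work in the universal cover $(\tilde M,\tilde g_i)=(\mathbb{H}^n\times\mathbb{H}^n,\tilde g_i)$ and estimate the growth of $\mathrm{Vol}(B_{\tilde g_i}(o,R),\tilde g_i)$ via the Poincar\'e-series characterization $h(g_i)=$ (critical exponent of $\sum_{\gamma\in\Gamma}e^{-s\,d_{\tilde g_i}(o,\gamma o)}$). The geometric input is the product shape of the perturbed region: the preimage $\tilde{\mathcal O}_i$ of $O_{\delta_i}(S)$ equals (the full first factor) $\times\,O_{\delta_i}(\Gamma_2\cdot C)$, a $\delta_i$-neighborhood of the $\Gamma_2$-orbit of the totally geodesic hyperplane $C$ in the second factor; on $\tilde{\mathcal O}_i$ only the first-factor metric is rescaled by the fixed $\eta<1$, while the full metric $h_2$ is kept in the second factor. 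Hence a $\tilde g_i$-minimizing path can move in the first factor at reduced cost only while it lies in $\tilde{\mathcal O}_i$ and must pay the full $h_2$-cost to move the second-factor coordinate; using the elementary bound $\int\sqrt{\eta|\dot\gamma_1|^2+|\dot\gamma_2|^2}\,dt\ge\sqrt{\eta\,\ell_1^2+\ell_2^2}$ (with $\ell_j=d_{\mathbb{H}^n}(o_j,\gamma_j o_j)$), together with the fact that $\mathbb{H}^n/\Gamma_2$ is compact — so every second-factor point lies within a bounded distance of $\Gamma_2\cdot C$ — one gets a lower bound for $d_{\tilde g_i}(o,\gamma o)$ which bounds the enlargement of balls coming from the shortcuts. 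This is where $\eta$ close to $1$ is used: the exponential rate of the shortcut-reachable set is governed by $(n-1)/\sqrt\eta$ (and by the first-factor entropy $n-1$ of $\mathbb{H}^n$), which is strictly below $h(g_0)=(n-1)\sqrt2$ once $\eta>\tfrac12$, so the shortcut set cannot dominate the $\tilde g_0$-ball; combined with $\delta_i\to0$ making $\tilde{\mathcal O}_i$ negligible, so that the bulk of $B_{\tilde g_i}(o,R)$ is comparable to the unperturbed ball, this yields $\limsup_i h(g_i)\le h(g_0)$.

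The main obstacle I expect is making this upper bound quantitative: one must control not only the shortcut term but also the genuinely $\delta_i$-dependent savings that a minimizing geodesic accumulates each time its second-factor component crosses one of the (quotient-dense) hyperplanes $C_j$, showing these are summable enough to give $h(g_i)\le h(g_0)+\varepsilon_i$ with $\varepsilon_i\to0$. Concretely this amounts to a sharp comparison of $d_{\tilde g_i}$ and $d_{\tilde g_0}$ on $\Gamma$-orbits, keeping track of how often and for how long an optimal path profitably enters $\tilde{\mathcal O}_i$. One plausible route is to first establish that $(\tilde M,\tilde g_i)$ converges, uniformly on compact sets and with a uniform non-collapsing bound, to $(\tilde M,\tilde d_\eta)$, the lift of the metric $d_\eta$ described above; then show that this mode of convergence is strong enough to force convergence of volume entropies, reducing the claim to the limiting ($\delta=0$) computation $h(d_\eta)=h(g_0)$, which is the same shortcut estimate without the $\delta_i$-tube and in which the perturbation is supported on the measure-zero set $S$, hence does not affect exponential volume growth.
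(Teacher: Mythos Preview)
Your lower bound via Connell--Farb is correct and in fact cleaner than the paper, which does not isolate that direction explicitly. The issue is the upper bound.

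Your proposed endgame is to reduce to the limiting statement $h(d_\eta)=h(g_0)$ and then argue that ``the perturbation is supported on the measure-zero set $S$, hence does not affect exponential volume growth.'' That inference is invalid. The Riemannian data differ from $g_0$ only on $S$, so the volume \emph{measure} is unchanged, but the \emph{distance} $d_\eta$ is strictly smaller than $d_{g_0}$ on a set of full measure (any pair of points for which the shortcut through $S$ helps). Consequently $B_{d_\eta}(o,\rho)\supsetneq B_{g_0}(o,\rho)$ for all large $\rho$, and what you need to rule out is precisely that the excess $B_{d_\eta}(o,\rho)\setminus B_{g_0}(o,\rho)$ carries exponentially more $g_0$-volume than $B_{g_0}(o,\rho)$ itself. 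The ``measure-zero support'' observation says nothing about this. Your earlier heuristic, that the shortcut-reachable set grows at rate $(n-1)/\sqrt{\eta}<(n-1)\sqrt{2}$, is in the right spirit, but as stated it only accounts for motion purely in the first factor; once you allow the path to also move in the second factor (paying full $h_2$-cost) the combined rate can approach $(n-1)\sqrt{2}$, and you need a sharper geometric argument to see that it never exceeds it.

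The paper supplies exactly the missing geometric mechanism. It introduces the angle variable $\theta(x)=\tan^{-1}(r_1(x)/r_2(x))$ and proves two facts: (i) for $\eta$ close to $1$, any $d_\eta$-minimizer with a corner must turn by a small angle, which forces such minimizers to stay away from the diagonal; hence there is a conical region $R_c=\{|\theta-\pi/4|\le c\}$ on which $d_\eta=d_{g_0}$; (ii) the volume growth of product balls is concentrated near $\theta=\pi/4$, while on the complement $\tilde M\setminus R_c$ one has $r_1+r_2<\sqrt{2}\rho$ inside $B_{g_i}(o,\rho)$, so the excess volume there is still $O(e^{\sqrt{2}(n-1)\rho})$. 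This is the substantive step your outline is missing; without an argument of this type (or a genuinely different one controlling the excess set), the upper bound is not established.
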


Fix the origin $o=(o_1,o_2)\in\tilde{M}$, where $o_1\in\mathbb{H}^n$ is arbitrary and $o_2\in\mathbb{H}^n$ is chosen so that it does not lie in any lift of \(C\). This is
possible since the lifts of \(C\) form a countable collection of totally
geodesic hypersurfaces. Let $(r_i,\omega_i), i=1,2,$ be the geodesic spherical coordinates centered at $o_i$ on each factor of $\tilde{M}$. For any given point $x$ in $\tilde{M}$, let 
\begin{equation}\label{eqn:angle between radials}
	\theta(x)=\tan^{-1}(r_1(x)/r_2(x)).
\end{equation} 
To prove Proposition~\ref{proposition:entropy of the counterexample}, we first establish the following elementary lemma. From the construction of the metric $d_\eta$, any length-minimizing path joining two points in $\tilde M$ with respect to $d_\eta$ consists of geodesic segments in $(\tilde M, g_0)$ and segments contained
in a lift of $\tilde S$. The lemma below shows that $d_\eta(o,\cdot)=d_{g_0}(o,\cdot)$ on a conical region $R_c$ around the diagonal $\{\theta=\pi/4\}$. Since the volume growth of geodesic balls in the product metric $g_0$ concentrates near this diagonal, this ensures that the volume growth of the metric $d_\eta$ is close to that of $g_0$.

\begin{lemma}\label{lemma:conical region}
For $\eta<1$ sufficiently close to $1$, there exists $c>0$ such that
	\begin{equation}\label{eqn:conical region}
		\mathrm{dist}_{g_0}(o,x)=d_{\eta}(o,x)\text{ for all }x\text{ in } R_c,
	\end{equation}
	where the region $R_c$ is defined by
	\[ 
		R_c:=\{x\in\tilde{M}\,:\, |\theta(x)-\pi/4|\le c\}.
	\]
\end{lemma}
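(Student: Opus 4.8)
The plan is to show that, for $\eta$ close to $1$ and a suitable small $c>0$, any $d_\eta$-minimizing path from $o$ to a point of $R_c$ that tries to travel along a lift of $\tilde S$ ends up being no shorter (for $d_\eta$) than the plain $g_0$-geodesic, because near the diagonal direction $\theta=\pi/4$ the slight ($\sqrt\eta$-factor) discount available inside $\tilde S$ cannot compensate the detour it forces in the complementary direction. Concretely: since $(\eta h_1)\oplus h_2\le g_0$ pointwise, $d_\eta\le\mathrm{dist}_{g_0}$ always, so it suffices to produce $c>0$ such that for every $x\in R_c$ no path travelling along a lift of $\tilde S$ beats the $g_0$-geodesic from $o$ to $x$. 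Suppose not: some $x\in R_c$ admits a $d_\eta$-minimizing path $p$ from $o$ to $x$ that enters at least one lift of $\tilde S$. We also assume, as we may, that $\tilde S$ is chosen so that $o_2$ lies on no $\Gamma_2$-translate of $C$; since the lifts of $\tilde S$ are products whose second factor is such a translate, this makes the factor-$2$ geodesic $[o_2,x_2]$ cross each of the relevant totally geodesic hyperplanes of $\mathbb H^n$ transversally. By the discussion following Lemma \ref{lemma:shorter path}, $p$ consists of $g_0$-geodesic segments and segments lying in lifts of $\tilde S$, meeting at breakpoints on $\partial\tilde S$ at which the turning angle is at most $\theta_0(\eta)$, with $\theta_0(\eta)\to 0$ as $\eta\to 1$; moreover, within a lift of $\tilde S$ the metric $(\eta h_1)\oplus h_2$ is a product metric whose geodesics are products of constant-speed hyperbolic geodesics, hence are also $g_0$-geodesic arcs of $\tilde M$, so $p$ is a piecewise $g_0$-geodesic.

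Now let $\ell_1^S$ denote the total $h_1$-length that $\pi_1\circ p$ sweeps while $p$ lies in lifts of $\tilde S$. The elementary inequality $\sqrt{\eta a^2+b^2}\ge\sqrt{a^2+b^2}-\tfrac{1-\eta}{2\sqrt\eta}\,a$ gives
\[
 d_\eta(o,x)\ \ge\ \mathrm{Length}_{g_0}(p)-\tfrac{1-\eta}{2\sqrt\eta}\,\ell_1^S .
\]
The crux is the complementary bound
\[
 \mathrm{Length}_{g_0}(p)\ \ge\ \mathrm{dist}_{g_0}(o,x)+\bigl(1-\sin(\pi/4+c)\bigr)\,\ell_1^S .
\]
To obtain it, set $\Phi:=\mathrm{dist}_{g_0}(o,\cdot)=\sqrt{r_1^2+r_2^2}$, so that $|\nabla\Phi|_{g_0}=1$ and
\[
 \mathrm{Length}_{g_0}(p)-\mathrm{dist}_{g_0}(o,x)=\int_p\bigl(|\dot p|-\langle\nabla\Phi,\dot p\rangle\bigr)\,dt\ \ge\ 0 .
\]
One restricts this integral to the portion of $p$ inside $\tilde S$; over each arc $\sigma$ of $p$ contained in a lift of $\tilde S$, using that $\pi_2\circ\sigma$ lies in a totally geodesic hyperplane of $\mathbb H^n$ missing $o_2$ and that (by the turning-angle control) $\pi_2\circ p$ enters and leaves this hyperplane nearly tangentially, one shows that the contribution of $\sigma$ is at least $\bigl(1-\sin(\pi/4+c)\bigr)$ times the factor-$1$ length swept by $\sigma$. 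Summing over the arcs yields the crux. Morally: inside $\tilde S$ the cheap factor-$1$ "zoom" is nearly $\nabla\Phi$-orthogonal, because the geodesic $[o_2,x_2]$ — which governs efficient factor-$2$ progress — crosses these hyperplanes only transversally, while for $x\in R_c$ the efficient direction $\nabla\Phi$ is balanced between the two factors, so $\sin\theta(x)\le\sin(\pi/4+c)$.

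Combining the two displayed inequalities,
\[
 d_\eta(o,x)\ \ge\ \mathrm{dist}_{g_0}(o,x)+\Bigl(1-\sin(\pi/4+c)-\tfrac{1-\eta}{2\sqrt\eta}\Bigr)\,\ell_1^S .
\]
Because $\sin(\pi/4)=1/\sqrt2<1$, fix a small $c>0$ with $\sin(\pi/4+c)<1$ and then take $\eta<1$ close enough to $1$ that $\tfrac{1-\eta}{2\sqrt\eta}\le 1-\sin(\pi/4+c)$; such a $c$ can in fact be chosen independently of $\eta$ as $\eta\to 1$. For such $\eta$ and $c$ the coefficient of $\ell_1^S$ is nonnegative, so $d_\eta(o,x)\ge\mathrm{dist}_{g_0}(o,x)$, and together with $d_\eta\le\mathrm{dist}_{g_0}$ this forces $\ell_1^S=0$: the minimizing path $p$ never enters $\tilde S$, contradicting the choice of $p$. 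Hence \eqref{eqn:conical region} holds for this $c$.

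The main obstacle is the second displayed bound above. One must exclude every "economical" use of $\tilde S$ — for instance $\pi_2\circ p$ drifting along a far-out part of a single hyperplane of $\bigcup_{\gamma_2\in\Gamma_2}\gamma_2 C$ (where the factor-$2$ motion becomes nearly radial) while $\pi_1\circ p$ zooms at the discounted rate, $p$ wandering into regions with $\theta$ far from $\pi/4$, or $p$ threading through many distinct lifts in succession — and show that each unit of factor-$1$ travel inside $\tilde S$ is paid for by at least $1-\sin(\pi/4+c)$ units of $g_0$-length excess. Doing this rigorously forces one to argue globally rather than arc-by-arc, combining the small-turning-angle bound from Lemma \ref{lemma:shorter path}, the transversality coming from $o_2\notin\bigcup_{\gamma_2}\gamma_2 C$, and the exponential divergence of geodesics in the negatively curved factors.
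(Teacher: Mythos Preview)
Your strategy is genuinely different from the paper's, and the first displayed inequality $d_\eta(o,x)\ge \mathrm{Length}_{g_0}(p)-\tfrac{1-\eta}{2\sqrt\eta}\,\ell_1^S$ is correct. The proof, however, is incomplete: the ``crux'' inequality
\[
\mathrm{Length}_{g_0}(p)\ \ge\ \mathrm{dist}_{g_0}(o,x)+\bigl(1-\sin(\pi/4+c)\bigr)\,\ell_1^S
\]
is asserted but never established, and you yourself acknowledge this in the final paragraph (``The main obstacle is the second displayed bound above''). That paragraph is a list of difficulties, not an argument. As written, the proposal is a plan rather than a proof.

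More concretely, the heuristic you give for the crux does not hold up. The constant $\sin(\pi/4+c)$ enters because $\theta(x)\le\pi/4+c$ at the \emph{endpoint} $x\in R_c$, but the integrand $|\dot p|-\langle\nabla\Phi,\dot p\rangle$ must be bounded from below \emph{along the path}, and nothing prevents $p$ from visiting regions with $\theta$ close to $\pi/2$, where factor-$1$ radial motion is nearly parallel to $\nabla\Phi$ and the desired lower bound collapses. Your claim that ``inside $\tilde S$ the cheap factor-$1$ zoom is nearly $\nabla\Phi$-orthogonal'' is not correct in general: $\langle\nabla\Phi,\partial_{r_1}\rangle=\sin\theta$, which can be close to $1$. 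Likewise, the assertion that the turning-angle bound from Lemma~\ref{lemma:shorter path} forces $\pi_2\circ p$ to meet the hyperplanes ``nearly tangentially'' conflates a bound on the full turning angle in $\tilde M$ with a bound on the projected angle in the second factor; these are different.

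For comparison, the paper avoids all of this by first proving a structural fact you do not use: a $d_\eta$-minimizer from $o$ must keep the spherical coordinates $(\omega_1,\omega_2)$ fixed, which reduces the problem to the $(r_1,r_2)$-quadrant. In that $2$-dimensional picture the lifts of $\tilde S$ meet the radial ray along horizontal lines $\{r_2=\mathrm{const}\}$, so a corner between a radial segment and an $\tilde S$-segment has turning angle exactly $\pi/2-\theta$ at that point. The small-turning-angle lemma then forces $\theta>\pi/2-\alpha_0(\eta)$ at every corner, which for $\eta$ close to $1$ puts all corners far above the diagonal $\theta=\pi/4$; hence endpoints in $R_c$ cannot be reached by broken paths. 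If you want to salvage your quantitative approach, the missing ingredient is precisely this reduction to fixed angular coordinates, after which your integral estimate becomes a clean $2$-variable computation and the $\theta$-along-the-path issue disappears.
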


\begin{proof}[Proof of Lemma \ref{lemma:conical region}]
	\begin{figure}[ht]
		\begin{tikzpicture}
		
		\draw (0,0) -- (2,2) -- (8,2) -- (6,0) -- cycle;
		
		\node at (5.5,-0.3) {$\tilde{S}$};
		
		\draw (1,2.5) -- (3.5,0.5);
		\node at (1,2.5) [above] {$o$}; 

		\draw (3.5,0.5) -- (5.5,1);
		\node at (3.5,0.5) [below] {$q$}; 

		\draw (5.5,1) -- (7.2,-0.1);
		\node at (5.5,1) [above] {$r$}; 

		\draw[dashed] (3,0.3) .. controls (5,1) and (3.5,1.7) .. (3.5,1.7);
		
		\draw[thick, blue] (1,2.5) -- (4,1) -- (5.5,1);
		\node at (4,1) [above right] {$q'$}; 
		
		\end{tikzpicture}
		\caption{Shorter path when the angular coordinate is not fixed}
		\label{fig:shorter-path}
	\end{figure}
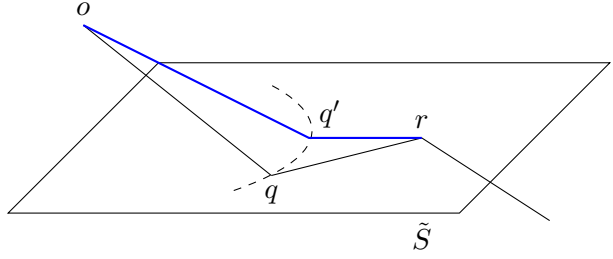

	Fix any $x\ne o$ in $\tilde{M}$. Let $\gamma$ be the length-minimizing path joining $o$ to $x$ with respect to $d_{\eta}$. It suffices to show that no $d_\eta$-path from $o$ to $x$ is shorter than the radial $g_0$-geodesic when $x\in R_c$ for some $c>0$.
	
	Suppose that $\gamma$ has corners. One can deduce that the length-minimizing path joining $o$ to any other point $x$ must have the angular coordinates fixed by $\omega_1(x)$ and $\omega_2(x)$: suppose it is not the case. Before the first corner point \(q\), the path \(\gamma\) is a \(g_0\)-geodesic segment from \(o\) to \(q\), and hence it is radial in each factor. If the following segment in $\tilde{S}$ changes the angular coordinates, then we can find a shorter path by turning the segment in $\tilde{M}\setminus S$ while fixing the escaping point $r$. See Figure \ref{fig:shorter-path} for an illustration; the length of the path connecting $o$ and $q$ and that of the one connecting $o$ and $q'$ are equal, but the path connecting $o$, $q'$, and $r$ is shorter than the one connecting $o$, $q$, and $r$. This contradicts the fact that $\gamma$ is length-minimizing. Using this argument inductively at each corner point, we can verify that the angular coordinates along $\gamma$ must be fixed.
	
	We now prove the desired distance comparison by a direct minimization argument. Let $x\in \tilde M$, and write
	\[
    	r_1=r_1(x),\qquad r_2=r_2(x),\qquad
    	r=\operatorname{dist}_{g_0}(o,x)=\sqrt{r_1^2+r_2^2}.
	\]
	By the preceding argument we may assume that a $d_\eta$-length minimizing path $\gamma$ from $o$ to $x$ keeps the angular coordinates $\omega_1,\omega_2$ fixed; the problem therefore reduces to the $(r_1,r_2)$-plane.

	Let $u$ denote the total signed $r_1$-displacement of the portions of $\gamma$ lying in $\tilde S$, and let $u_1,\ldots,u_m$ denote the signed $r_1$-displacements of the individual $\tilde S$-portions, so that $u=\sum_j u_j$. Since $g_i=(\eta^2 h_1)\oplus h_2$ in a neighborhood of $S$ and $\tilde S=U\times C$ with $U$ open in the first factor, the shortening by the factor $\eta$ applies to the first-factor component of the length along $\tilde S$. With angular coordinates fixed, the second-factor component of each $\tilde{S}$-portion lies in a single lift of $C$ and on the geodesic ray $\{\omega_2=\omega_2(x)\}$ emanating from $o_2$. Since $o_2$ lies off every lift of $C$, this ray is contained in no lift of $C$, and a geodesic ray in $\mathbb{H}^n$ not contained in a totally geodesic hypersurface meets it in at most one point. Hence the second-factor component is constant along the $\tilde{S}$-portion, and the portion has zero $r_2$-displacement. If its signed \(r_1\)-displacement is \(u_j\), then, since the metric is shortened by the factor \(\eta\) in the first factor along \(\tilde S\), this portion contributes exactly \(\eta |u_j|\) to the \(d_\eta\)-length. Hence the total contribution of the \(\tilde S\)-portions is
	\[
        \eta\sum_j |u_j|\ge \eta |u|.
	\]
 
	The remaining $g_0$-geodesic portions of $\gamma$ have total displacement $(r_1-u, r_2)$ in the $(r_1,r_2)$-plane. Since the angular coordinates are fixed, each such portion is a Euclidean segment in this plane, so by the triangle inequality in $\mathbb{R}^2$ their total $g_0$-length is at least $\sqrt{(r_1-u)^2+r_2^2}$. Consequently
	\begin{equation}\label{eqn:F lower bound}
    	L_\eta(\gamma)\ge F(u):=\sqrt{(r_1-u)^2+r_2^2}+\eta|u|.
	\end{equation}
 
	We now minimize $F$ over $u\in\mathbb{R}$.
 
	\emph{The region $u<0$.} Here $r_1-u>r_1\ge 0$, and
	\[
    	F'(u)=-\frac{r_1-u}{\sqrt{(r_1-u)^2+r_2^2}}-\eta<0,
	\]
	so $F$ is strictly decreasing on $(-\infty,0)$. The minimum of $F$ on $(-\infty,0]$ is therefore attained at $u=0$.
 
	\emph{The region $u>0$.} A critical point of $F$ in $(0,\infty)$ satisfies
	\[
    	\frac{r_1-u}{\sqrt{(r_1-u)^2+r_2^2}}=\eta.
	\]
	For $u\in (0,\infty)$, the left-hand side takes values in $(-1,\sin\theta(x))$, with supremum $\sin\theta(x)=r_1/r$ approached as $u\to 0^+$. Since $\eta>0$, the critical-point equation has a solution in $(0,\infty)$ if and only if $\eta<\sin\theta(x)$. In particular, if $\theta(x)\le\arcsin\eta$, there is no critical point in $(0,\infty)$. Since $F'(0^+)=\eta-\sin\theta(x)\ge 0$ and $F(u)\to\infty$ as $u\to\infty$, the continuity of $F'$ on $(0,\infty)$ together with the absence of zeros there forces $F'\ge 0$ on $(0,\infty)$, i.e., $F$ is non-decreasing on $[0,\infty)$.
 
	\emph{Conclusion.} Combining both regions, when $\theta(x)\le\arcsin\eta$ the minimum of $F$ on $\mathbb{R}$ is attained at $u=0$ with value
	\[
    	F(0)=\sqrt{r_1^2+r_2^2}=\operatorname{dist}_{g_0}(o,x).
	\]
	By \eqref{eqn:F lower bound}, every $d_\eta$-path from $o$ to $x$ has length at least $\operatorname{dist}_{g_0}(o,x)$. Since the radial $g_0$-geodesic from $o$ to $x$ is itself an admissible $d_\eta$-path with length exactly $\operatorname{dist}_{g_0}(o,x)$, we conclude
	\[
    	d_\eta(o,x)=\operatorname{dist}_{g_0}(o,x)
    	\qquad\text{whenever }\theta(x)\le\arcsin\eta.
	\]
 
	Finally, choose $\eta<1$ sufficiently close to $1$, and then choose $c>0$ so small that
	\[
    	\frac{\pi}{4}+c\le \arcsin\eta.
	\]
	For every $x\in R_c$ we have $\theta(x)\in[\pi/4-c,\pi/4+c]$ and hence $\theta(x)\le\arcsin\eta$. Applying the previous conclusion yields
	\[
    	d_\eta(o,x)=\operatorname{dist}_{g_0}(o,x)
    	\qquad\text{for all }x\in R_c,
	\]
	which proves \eqref{eqn:conical region}.
\end{proof}

\begin{proof}[Proof of Proposition \ref{proposition:entropy of the counterexample}]
	Using Lemma \ref{lemma:conical region}, we can see that $\mathrm{dist}_{g_i}(o,x)$ converges to $\mathrm{dist}_{g_0}(o,x)$ for any $x$ in $R_c$ as $i\to\infty$. Moreover, the length-minimizing geodesic joining $o$ and $x$ in $(\tilde{M},g_i)$ can be divided into finitely many segments such that the projection of each segment is length-minimizing in $(M,g_i)$. By comparing the lengths of each segment with respect to $g_i$ and $g_0$, it follows that 
	\[
		\mathrm{dist}_{g_i}(o,x)\le (1+\lambda_i)\mathrm{dist}_{g_0}(o,x) \text{ for all }x\in R_c
	\] for some sequence $\lambda_i>0$ with $\lambda_i\to 0$ as $i\to\infty$. 
	
	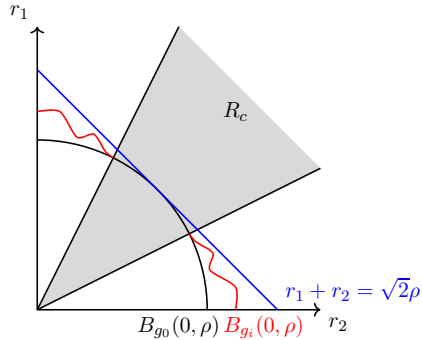
\begin{figure}[ht]
		\centering
		\resizebox{2.3in}{!}{
		\begin{tikzpicture}
			\draw[thick,->] (0,0) -- (5,0) node[anchor=north west] {$r_2$};
			\draw[thick,->] (0,0) -- (0,5) node[anchor=south east] {$r_1$};

			\fill[gray!30] (0,0) -- (2.5,5) -- (5,2.5) -- cycle;
			\draw[thick] (0,0) -- (2.5,5);
			\draw[thick] (0,0) -- (5,2.5);
			\draw[thick] (3,0) arc[start angle=0, end angle=90, radius=3cm];
			
			\draw[blue,thick] (0,4.242) -- (4.242,0) node[anchor=south west] {$r_1 + r_2 = \sqrt{2}\rho$};

			\draw[red,thick,smooth] plot coordinates {(0,3.5) (0.4,3.5) (0.7,3.05) (1,3.1) (1.2,2.8) (1.3416,2.6832)};
			\draw[red,thick,smooth] plot coordinates {(3.5,0) (3.5,0.4) (3.05,0.7) (3.1,1) (2.8,1.2) (2.6832,1.3416)};
			
			
			\node at (3.5,3.5) {$R_c$};
			\node at (2.5,-0.3) {$B_{g_0}(0,\rho)$};
			\node[text=red] at (4,-0.3) {$B_{g_i}(0,\rho)$};
		\end{tikzpicture}
		}
		\caption{The region $R_c$ and the comparison of the volume growth of $B_{g_0}(0,\rho)$ and $B_{g_i}(0,\rho)$}
		\label{fig:comparison}
	\end{figure}
	
	
	In the region $B_{g_i}(o,\rho)\cap(\tilde{M}\setminus R_{c})$, we have $r_1+r_2<\sqrt{2}\, \rho$ for sufficiently large $i$, provided that $\eta$ is sufficiently close to $1$, see Figure \ref{fig:comparison}. As a result, the volume growth of $B_{g_i}(o,\rho)$ is dominated by $\sinh^{n-1}(r_1)\sinh^{n-1}(r_2)\approx e^{(\sqrt{2}(n-1)+\lambda_i)\rho}$ for any $\rho>0$. Therefore, we have
	\[
		h(g_i)=\sqrt{2}(n-1)+o(1).
	\]
\end{proof}

\subsection{The stability statement fails}
To show the stability statement fails, suppose, on the contrary, that there exists a sequence of smooth subsets $Z_i\subset M$ with $\lim_{i\to\infty}\mathrm{Vol}(Z_i,g_i)=\lim_{i\to\infty}\mathrm{Area}(\partial Z_i,g_i)=0$ such that $(M\setminus Z_i,\mathrm{dist}_{g_i|_{M\setminus Z_i}})$ converges to $(M,\mathrm{dist}_{g_0})$ in the measured Gromov-Hausdorff topology.
%
We set the following with respect to $(M,g_0)$ (see Figure \ref{fig:parallelogram}):
\begin{itemize}
	\item Fix any $p\in S$ and $\epsilon>0$ sufficiently small such that $\exp_p(B(0,\epsilon)\cap T_p S)\subset S$ where $B(0,\epsilon)$ is the open ball in $T_p M$ centered at $0$ with radius $\epsilon$.
	\item Choose a vector $\nu_p\in T_p S$ at $p$ lying in $TU$. (Recall that the hypersurface $S$ is chosen so that its lift $\tilde{S}$ is $\tilde{S}=U\times C$ where $U$ is an open subset and $C$ is a totally geodesic hypersurface in $\mathbb{H}^n$.) Denote by $\nu$ a vector field on a neighborhood of $p$ in $S$ obtained by parallel transport of $\nu_p$.
	\item Denote by $\vec{n}_p$ the unit normal vector to $S$ at $p$.
	\item Let $H$ be the $(2n-2)$-dimensional subspace of $T_p S$ that is orthogonal to both $\nu_p$ and $\vec{n}_p$. Define $V(p,\epsilon):=\exp_p(B(0,\epsilon)\cap H)$.
	\item For each $x\in V(p,\epsilon)$, denote by $\gamma_x$ the geodesic emanating from $x$ in the direction of $\nu(x)$ whose length is equal to some positive number $\ell>0$. Here, $\ell$ is chosen so that $\gamma_x$ is contained in $S$.
	\item For each $x\in V(p,\epsilon)$, define a $2$-dimensional surface $P_x$ in $M$ as the image of the map $(t,s)\mapsto\exp_{\gamma_x(s)}(t\,\vec{n}(\gamma_x(s)))$ for $t\in(-\epsilon,\epsilon)$ and $s\in[0,\ell]$. In other words, $P_x$ is the normal $\epsilon$-neighborhood of $\gamma_x$ in the $2$-dimensional plane spanned by $\nu$ and $\vec{n}$.
\end{itemize}
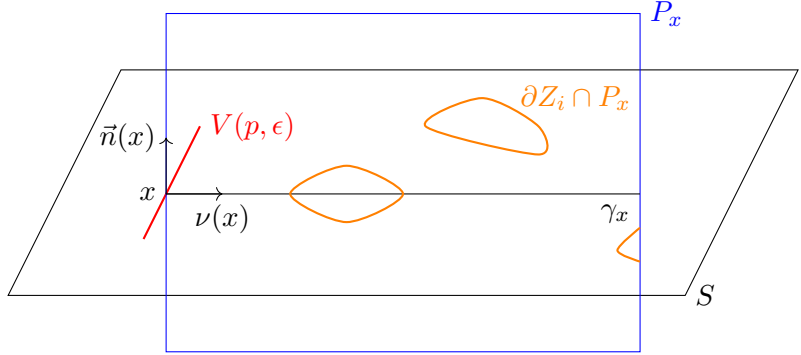
\begin{figure}[ht]
    \centering
    \begin{tikzpicture}[scale=1.5]
        \coordinate (A) at (0,0);
        \coordinate (B) at (6,0);
        \coordinate (C) at (7,2);
        \coordinate (D) at (1,2);
		\coordinate (P) at (1.2,0.5);
		\coordinate (Q) at (1.7,1.5);
		\coordinate (R) at (1.4,2.5);
		\coordinate (S) at (1.4,-0.5);
		\coordinate (T) at (5.6,2.5);
		\coordinate (U) at (5.6,-0.5);
		\coordinate (X) at (1.4,0.9);
		\coordinate (Y) at (1.9,0.9);
		\coordinate (Z) at (1.4,1.4);
        
        \draw (A) -- (B) -- (C) -- (D) -- cycle;
        \draw[red, thick] (P) -- (Q);
		\draw[blue] (R) -- (S) -- (U) -- (T) -- cycle;
		\draw[->] (X) -- (Y);
		\draw[->] (X) -- (Z);
		\draw (X) -- (5.6,0.9);

		\draw[orange, thick] plot [smooth cycle] coordinates {(2.5,0.9) (3,1.15) (3.5,0.9) (3,0.65)};
		\draw[orange, thick] plot [smooth cycle] coordinates {(3.7,1.5) (4.2,1.75) (4.7,1.5) (4.7,1.25)};
		\draw[orange, thick] plot [smooth] coordinates {(5.6,0.6) (5.4,0.4) (5.6,0.3)};

		\node at (X) [left] {$x$};
		\node[blue] at (T) [right] {$P_x$};
		\node[red] at (Q) [right] {$V(p,\epsilon)$};
		\node at (Y) [below] {$\nu(x)$};
		\node at (Z) [left] {$\vec{n}(x)$};
		\node at (B) [right] {$S$};
		\node at (5.6,0.9) [below left] {$\gamma_x$};
		\node[orange] at (4.45,1.75) [right] {$\partial Z_i\cap P_x$};

    \end{tikzpicture}
    \caption{There exists a point $x\in V(p,\epsilon)$ such that $\partial Z_i\cap P_x$ has small $\mathscr{H}^1$ measure.}
    \label{fig:parallelogram}
\end{figure}

Observe that for any $x\in V(p,\epsilon)$, there is a pair of points $p_x$ and $q_x$ in $P_{x}$ such that they lie in the image of $\{t\in(0,\epsilon)\}\times\gamma_x$ under the above parametrization of $P_x$, and the length minimizing path joining $p_x$ and $q_x$ in $(M,d_\eta)$, say $\xi_x$, contains a segment in $S$. We may choose $p_x$ and $q_x$ for each $x\in V(p,\epsilon)$ so that $d_\eta(p_x,q_x)$ remains constant, say $L$.

Moreover, the reflections $\hat{p}_x$ and $\hat{q}_x$ of $p_x$ and $q_x$, respectively, with respect to $S$ have the length minimizing path $\hat{\xi}_x$ in $(M,d_\eta)$ that contains the same segment in $S$. Such two pairs cannot exist in a smooth Riemannian manifold. We will show that the Gromov-Hausdorff limit of $(M\setminus Z_i,\mathrm{dist}_{g_i|_{M\setminus Z_i}})$ must contain such pairs of points, which leads to a contradiction.

Note that $Z_i\cap P_x$, $\partial Z_i\cap P_x$ among all $x\in V(p,\epsilon)$ decompose subsets of $Z_i$ and $\partial Z_i$ into $2$-dimensional planes and $1$-dimensional curves, respectively. By using Fubini's theorem and the condition that $\mathrm{Vol}(Z_i,g_i), \mathrm{Area}(\partial Z_i,g_i)\to 0$, we can find a sequence of points $x_i\in V(p,\epsilon)$ such that 
\[
	\mathscr{H}_{g_i}^2(Z_i\cap P_{x_i}), \mathscr{H}_{g_i}^1(\partial Z_i\cap P_{x_i})\to 0\text{ as }i\to\infty.
\]

Next, we choose paths $\sigma_i$ and $\hat{\sigma}_i$ in $M\setminus Z_i$ joining $p_i$, $q_i$ and $\hat{p}_i$, $\hat{q}_i$, respectively, close to the length minimizing paths in $(M,d_\eta)$. Whenever the paths intersect $Z_i$, we take a detour around $Z_i$ to avoid the intersection. If either of $p_i,q_i,\hat{p}_i,\hat{q}_i$ lies in $Z_i\cap P_{x_i}$, then we can choose a point on $\partial Z_i\cap P_{x_i}$ close to the point (this can be done by the fact that $\mathscr{H}_{g_i}^2(Z_i\cap P_{x_i})\to 0$), replace it with the point, and connect to the corresponding path $\sigma_i$ or $\hat{\sigma}_i$. See Figure \ref{fig:minimizing path} for an illustration. Then the following inequalities hold:
\begin{align*}
	&\mathrm{dist}_{g_i}(p_i,q_i)\le L_{g_i}(\sigma_i)\le L+o(1)+C \mathscr{H}^1(\partial Z_i\cap P_{x_i}),\\
	&\mathrm{dist}_{g_i}(\hat{p}_i,\hat{q}_i)\le L_{g_i}(\hat{\sigma}_i)\le L+o(1)+C \mathscr{H}^1(\partial Z_i\cap P_{x_i}).
\end{align*}
The lower bound is obvious by the definition of $\mathrm{dist}_{g_i}$. The $o(1)$ terms in the upper bound come from the difference from $g_i$ and $g_0$ in a small neighborhood of $S$. Using the fact that $(M,\mathrm{dist}_{g_i})\to (M,d_\eta)$ in the Gromov-Hausdorff sense, it follows that $\mathrm{dist}_{g_i}(p_i,q_i)$ converges to $L$ as $i\to\infty$. 

\begin{figure}[ht]
    \centering
    \begin{tikzpicture}[scale=2]
        \coordinate (A) at (0,0);
        \coordinate (B) at (6,0);
        \coordinate (C) at (7,2);
        \coordinate (D) at (1,2);
		\coordinate (P) at (1.2,0.5);
		\coordinate (p) at (1.7,1.3);
		\coordinate (pp) at (2,0.9);
		\coordinate (q) at (5,1.3);
		\coordinate (qq) at (4.7,0.9);
		\coordinate (p') at (1.7,0.5);
		\coordinate (q') at (5,0.5);
		\coordinate (R) at (1.2,2.3);
		\coordinate (S) at (1.2,-0.3);
		\coordinate (T) at (5.6,2.3);
		\coordinate (U) at (5.6,-0.3);
		\coordinate (X) at (1.4,0.9);
		\coordinate (Y) at (1.9,0.9);
		\coordinate (Z) at (1.4,1.4);
        
        \draw (A) -- (B) -- (C) -- (D) -- cycle;
		\draw[blue] (R) -- (S) -- (U) -- (T) -- cycle;
		\draw[thick] (p) -- (pp) -- (qq) -- (q);
		\draw[thick] (p') -- (pp) -- (qq) -- (q');

		\draw[orange, thick] plot [smooth cycle] coordinates {(2.5,0.9) (3,1.15) (3.5,0.9) (2.7,0.65)};
		\draw[orange, thick] plot [smooth cycle] coordinates {(3.7,1.5) (4.2,1.75) (4.7,1.5) (4.7,1.25)};
		\draw[orange, thick] plot [smooth] coordinates {(5.6,0.6) (5.4,0.4) (5.6,0.3)};
		\draw[orange, thick] plot [smooth cycle] coordinates {(1.3,1.3) (1.8,1.4) (1.8,1.1) (1.6,1.1)};

		\draw[red, thick] plot [smooth] coordinates {(1.83,1.4) (1.87,1.1)  (2,0.93) (2.4,0.92) (3,1.15) (3.6,0.95) (4.6,0.9) (q)};
		\draw[violet, thick] plot [smooth] coordinates {(p') (2,0.87) (2.4,0.87) (2.7,0.65) (3.7,0.88) (4.65,0.87) (q')};

		\node[blue] at (T) [right] {$P_{x_i}$};
		\node at (p) [left] {$p_i$};
		\node at (q) [right] {$q_i$};
		\node at (p') [left] {$\hat{p}_i$};
		\node at (q') [right] {$\hat{q}_i$};
		\node[red] at (3.8,0.9) [above] {$\sigma_i$};
		\node[violet] at (3.8,0.9) [below] {$\hat{\sigma}_i$};
		\node at (B) [right] {$S$};
		\node[orange] at (4.8,1.95) [below] {$\partial Z_i\cap P_{x_i}$};

    \end{tikzpicture}
    \caption{Find $\sigma_i$ and $\hat{\sigma}_i$ such that $L_{g_i}(\sigma_i)$ and $L_{g_i}(\hat{\sigma}_i)$ converge to $L$}
    \label{fig:minimizing path}
\end{figure}
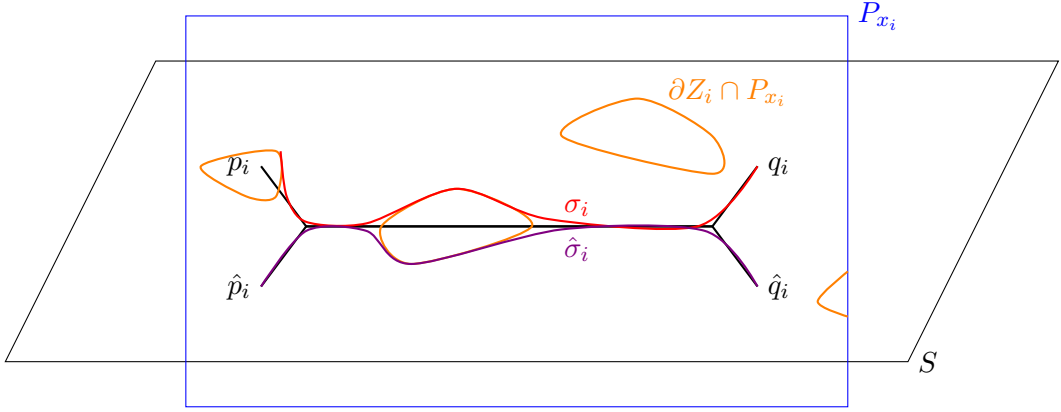

Observe that the Hausdorff distance in $(M,g_i)$ between $\sigma_i$ (or $\hat{\sigma}_i$) and the length minimizing geodesic $\zeta_i$ joining $p_i,q_i$ (or $\hat{\zeta}_i$ joining $\hat{p}_i,\hat{q}_i$) in $(M,g_i)$ converges to zero, respectively, since 
\begin{itemize}
	\item $\sigma_i$ and $\hat{\sigma}_i$ agree with the length minimizing paths $\xi_{x_i}$ and $\hat{\xi}_{x_i}$ joining $p_i, q_i$ and $\hat{p}_i,\hat{q}_i$, respectively, in $(M,d_\eta)$ for the most part as $i\to\infty$ because $\mathscr{H}_{g_i}^1(\partial Z_i\cap P_{x_i})\to 0$, and
	\item the Hausdorff distance in $(M,g_i)$ between $\zeta_i$ (or $\hat{\zeta}_i$) and $\xi_{x_i}$ (or $\hat{\xi}_{x_i})$ converges to zero as $i\to\infty$ (since $(M,g_i)$ converges to $(M,d_\eta)$ in the Gromov-Hausdorff sense as length spaces).
\end{itemize}
The above argument implies that the Gromov-Hausdorff limit of $(M\setminus Z_i,\mathrm{dist}_{g_i|_{M\setminus Z_i}})$ contains two pairs of points $p,q$ and $\hat{p},\hat{q}$ such that the length minimizing paths joining each pair share a segment. As mentioned earlier, such pairs of points cannot exist in a smooth Riemannian manifold, which completes that the stability fails.

\bibliographystyle{alpha}
\bibliography{My_Library}

\end{document}